\newtheorem{theorem}{Theorem}[section]
\newtheorem{proposition}[theorem]{Proposition}
\newtheorem{lemma}[theorem]{Lemma}
\newtheorem{corollary}[theorem]{Corollary}
\theoremstyle{definition}
\newtheorem{example}[theorem]{Example}
\newtheorem{definition}[theorem]{Definition}
\theoremstyle{remark}
\theoremstyle{remark}
\newtheorem{remark}[theorem]{Remark}
\def\({{\rm (}}
\def\){{\rm )}}
\let\Mathrm\operator@font
\let\Cal\mathcal
\let\Bbb\mathbb
\def\standop#1{\mathop{\Mathrm #1}\nolimits}
\def\difstop#1#2{\expandafter\def\csname #1\endcsname{\standop{#2}}}
\def\defstop#1{\difstop{#1}{#1}}
\def\alg{_{\Mathrm{alg}}}
\def\GL{\text{\sl{GL}}}
\def\id{\mathord{\Mathrm{id}}}
\def\mon{^\mathrm{mon}}
\def\op{^{\standop{op}}}
\def\Sch{\underline{\Mathrm Sch}}
\def\op{^{\mathrm{op}}}
\def\sep{_{\Mathrm{sep}}}
\def\tor{_{\Mathrm{tor}}}
\def\tf{_{\Mathrm{tf}}}
\def\C{\Cal C}
\def\F{\Cal F}
\def\G{\Cal G}
\def\L{\Cal L}
\def\M{\Cal M}
\def\N{\Cal N}
\def\O{\Cal O}
\def\P{\Cal P}
\let\indlim\varinjlim
\def\ru#1{\lceil #1 \rceil}
\def\uHom{\mathop{\text{\underline{$\Mathrm Hom$}}}\nolimits}
\def\uH{\mathop{\text{\underline{$H$}}}\nolimits}
\def\sdarrow#1{\downarrow\hbox to 0pt{\scriptsize$#1$\hss}}
\def\suarrow#1{\uparrow\hbox to 0pt{\scriptsize$#1$\hss}}
\def\ssearrow#1{\searrow\hbox to 0pt{\scriptsize$#1$\hss}}
\def\section{\@startsection{section}{1}{\z@ }%
  {-3.5ex plus -1ex minus -.2ex}{2.3ex plus .2ex}{\bf }}
\long\def\refname{\par\kern -3ex
  \begin{center}\rm R\sc{eferences}\end{center}\par\kern 
  -2ex}
\def\@seccntformat#1{\csname the#1\endcsname.\quad}
\def\@@@sect#1#2#3#4#5#6[#7]#8{%
  \ifnum #2>\c@secnumdepth 
  \def \@svsec {}\else \refstepcounter {#1}%
  \def\@svsec{}
  \fi 
  \@tempskipa #5\relax 
  \ifdim \@tempskipa >\z@ 
  \begingroup #6\relax \@hangfrom {\hskip #3\relax 
    \@svsec}{\interlinepenalty \@M #8\par }\endgroup 
  \csname #1mark\endcsname {#7}
  \else 
  \def \@svsechd {#6\hskip #3\@svsec #8\csname #1mark\endcsname {#7}}
  \fi \@xsect {#5}}
\def\@@@startsection#1#2#3#4#5#6{%
  \if@noskipsec \leavevmode \fi \par \@tempskipa #4\relax \@afterindenttrue 
  \ifdim \@tempskipa <\z@ \@tempskipa -\@tempskipa \@afterindentfalse 
  \fi \if@nobreak \everypar {}\else \addpenalty {\@secpenalty }\addvspace 
  {\@tempskipa }\fi \@ifstar {\@ssect {#3}{#4}{#5}{#6}}{\@dblarg 
    {\@@@sect {#1}{#2}{#3}{#4}{#5}{#6}}}}
\def\theparagraph{\thesection.\arabic{paragraph}}
\def\aparagraph{\@@@startsection{paragraph}{2}{\z@ }%
  {1.75ex plus .2ex minus .15ex}{-1em}{\bf(\theparagraph) } }
\def\paragraph{\@@@startsection{paragraph}{2}{\z@ }%
  {1.75ex plus .2ex minus .15ex}{-1em}{}{\bf(\theparagraph)} }
\let\c@theorem\c@paragraph
\title{Equivariant class group. I.\\
Finite generation of the Picard and the class groups of
an invariant subring}
\author{M{\sc itsuyasu} H{\sc ashimoto}}
\date{\normalsize
  Graduate School of Mathematics, Nagoya University\\
  Chikusa-ku,  Nagoya 464--8602 JAPAN\\
  {\small \tt hasimoto@math.nagoya-u.ac.jp}}
\begin{document}

\maketitle
\footnote[0]
{2010 \textit{Mathematics Subject Classification}. 
  Primary 13A50; Secondary 13C20.
  Key Words and Phrases.
  invariant theory, class group, Picard group, Krull ring.
}

\begin{abstract}
The purpose of this paper is to define equivariant 
class group of a
locally Krull scheme (that is, a scheme which is locally a prime
spectrum of a Krull domain) with an action of a flat group scheme,
study its basic properties, and apply it to prove the finite
generation of the class group of an invariant subring.

In particular, we prove the following.

Let $k$ be a field, $G$ a smooth $k$-group scheme of finite type,
and $X$ a quasi-compact quasi-separated locally Krull $G$-scheme.
Assume that there is a $k$-scheme $Z$ of finite type and a
dominating $k$-morphism $Z\rightarrow X$.
Let $\varphi:X\rightarrow Y$ be a $G$-invariant morphism such that
$\O_Y\rightarrow (\varphi_*\O_X)^G$ is an isomorphism.
Then $Y$ is locally Krull.
If, moreover, $\Cl(X)$ is finitely generated, then $\Cl(G,X)$ and $\Cl(Y)$ are
also finitely generated, where $\Cl(G,X)$ is the equivariant class group.

In fact, $\Cl(Y)$ is a subquotient of $\Cl(G,X)$.
For actions of connected group schemes on affine schemes, 
there are similar results of Magid and Waterhouse,
but our result also holds for disconnected $G$.
The proof depends on a similar result on (equivariant) Picard groups.
\end{abstract}

\section{Introduction}
The purpose of this paper is to define equivariant 
class group of a
locally Krull scheme with an action of a flat group scheme,
study its basic properties, and apply it to prove the finite
generation of the class group of an invariant subring.

A locally Krull scheme is a scheme which is locally the prime spectrum of
a Krull domain.
For Krull domains, see \cite{CRT} and \cite{Fossum}.
As a Noetherian normal domain is a Krull domain, 
a normal scheme of finite type over a field (e.g., a normal variety) 
is a typical example of a 
(quasi-compact quasi-separated) locally Krull scheme.
Although a Krull domain is integrally closed, it may not be Noetherian.

Generalizing the theory of class groups of Noetherian normal domains, 
there is a well established theory of class groups of Krull domains
\cite{Fossum}.
In this paper, we also consider non-affine locally Krull schemes.
Also, we consider the equivariant version of the theory of class groups
over them.

Let $Y$ be a quasi-compact integral locally Krull scheme.
Then the class group $\Cl'(Y)$ of $Y$ is defined to be the
free abelian group $\Div(Y)$ generated by the set of integral
closed subschemes of codimension one, modulo the linear equivalence.
The second definition of the class group is given by the use of
rank-one reflexive modules.
For a Krull domain $R$, an $R$-module $M$ is said to be reflexive 
(or divisorial), if $M$ is a submodule of a finitely generated module,
and the canonical map $M\rightarrow M^{**}$ is an isomorphism,
where $(?)^*$ denotes the functor $\Hom_R(?,R)$.
An $\O_Y$-module $\M$ is said to be reflexive if $\M$ is quasi-coherent,
and for any affine open subscheme $U=\Spec A$ of $Y$ such that $A$
is a Krull domain, $\Gamma(U,\M)$ is a reflexive $A$-module,
where $\Gamma(U,?)$ denotes the section at $U$.
The set of isomorphism classes $\Cl(Y)$ of rank-one reflexive $\O_Y$-modules
is an additive group with the addition
\begin{equation}\label{addition.eq}
[\M]+[\N]=[(\M\otimes_{\O_Y}\N)^{**}],
\end{equation}
where $(?)^*=\uHom_{\O_Y}(?,\O_Y)$.
It is easy to see that the formation $D\mapsto \Cal O_Y(D)$ is an isomorphism
from $\Cl'(Y)$ to $\Cl(Y)$, as in the 
well-known case of normal varieties over a field \cite[Appendix to 
section~1]{Reid}.

If $Y$ is a quasi-compact locally Krull scheme, 
then $Y=Y_1\times\cdots\times Y_r$ with each $Y_i$ being quasi-compact
integral locally Krull, and we may define $\Cl'(Y)=\Cl'(Y_1)\times\cdots
\times \Cl'(Y_r)$.
Similarly for $\Cl(Y)$, and still we have $\Cl(Y)\cong\Cl'(Y)$.

In the rest of this introduction,
let $S$ be a scheme, $G$ a flat $S$-group scheme, and
$X$ a $G$-scheme (that is, an $S$-scheme with a $G$-action).

Let $X$ be locally Krull.
The first purpose of this paper is to define the equivariant class group
$\Cl(G,X)$ of $X$ and study its basic properties.

Generalizing the second definition above, we define
$\Cl(G,X)$ to be the set of isomorphism classes of quasi-coherent
$(G,\O_X)$-modules which are reflexive as $\O_X$-modules.
We prove that $\Cl(G,X)$ is an additive group with the addition
given by (\ref{addition.eq}).

We give a simplest example.
If $S=X=\Spec k$ with $k$ a field, 
and $G$ is an algebraic group over $k$, then
$\Cl(G,X)$ is nothing but the character group $\Cal X(G)$ of $G$.
That is, it is the abelian group of one-dimensional representations of $G$.

We do not try to redefine $\Cl(G,X)$ from the viewpoint
of the first definition (that of $\Cl'(Y)$).
So we do not consider $\Cl'(Y)$ in the sequel, 
and always mean 
the group of isomorphism classes of
rank-one reflexive sheaves
by the class group $\Cl(Y)$ of $Y$ for a locally Krull scheme $Y$,
see (\ref{equivariant-class.par}).

We prove that removing closed subsets of codimension two or more does not
change the equivariant class group (Lemma~\ref{codim-two-ref.thm}).
We also prove that if $\varphi:X\rightarrow Y$ is a principal $G$-bundle
with $X$ locally Krull, then 
$Y$ is also locally Krull, and 
the inverse image functor induces an isomorphism
$\varphi^*:\Cl(Y)\rightarrow \Cl(G,X)$ (Proposition~\ref{pfb-cl-isom.thm}).
This isomorphism gives a source of intuitive idea of the equivariant class
group --- it is the class group of the quotient space (or better, quotient
stack).
In the continuation of this paper, we give some variations of 
this isomorphism.

In general, the prime spectrum 
of an invariant subring may not be a good quotient.
However, we can prove that if $\varphi:X\rightarrow Y$ is a $G$-invariant 
morphism such that $X$ is quasi-compact quasi-separated locally Krull and
$\O_Y\rightarrow (\varphi_*\O_X)^G$ is an isomorphism,
then $Y$ is also locally Krull (Lemma~\ref{Y-Krull.thm}), 
and $\Cl(Y)$ is a subquotient of 
$\Cl(G,X)$ (Lemma~\ref{subquotient.thm}).

Using this lemma, we study the finite generation of the class group of $Y$.
This is the second purpose of this paper.
We prove the following.

\begin{trivlist}\item[\bf Theorem~\ref{main2.thm}]
Let $k$ be a field, $G$ a smooth $k$-group scheme of finite type,
and $X$ a quasi-compact quasi-separated locally Krull $G$-scheme.
Assume that there is a $k$-scheme $Z$ of finite type and a
dominating $k$-morphism $Z\rightarrow X$.
Let $\varphi:X\rightarrow Y$ be a $G$-invariant morphism such that
$\O_Y\rightarrow (\varphi_*\O_X)^G$ is an isomorphism.
Then $Y$ is locally Krull.
If, moreover, $\Cl(X)$ is finitely generated, then $\Cl(G,X)$ and $\Cl(Y)$ are
also finitely generated.
\end{trivlist}

Note that a normal $G$-scheme $X$ of finite type over $k$ is
automatically quasi-compact quasi-separated locally Krull, and
the identity map $Z:=X\rightarrow X$ is a dominating map, and so 
the assumptions of the theorem is satisfied, see Corollary~\ref{main2-cor.thm}.

In \cite{Magid}, Magid proved that if $R$ is a finitely generated 
normal
domain over the algebraically closed field $k$,
$G$ is a connected algebraic group acting rationally on $R$, and 
the class group $\Cl(R)$ of $R$ is a finitely generated abelian group,
then the class group $\Cl(R^G)$ of the ring of invariants $R^G$ is
also finitely generated.
After that, Waterhouse \cite{Waterhouse} proved a similar result
on an action of a connected affine group scheme on a
Krull domain over arbitrary base field.
Theorem~\ref{main2.thm} is not a generalization of Waterhouse's theorem.
We assume the existence of $Z\rightarrow X$ as above, and he
describes the relationship between $\Cl(X)$ and $\Cl(Y)$ precisely
\cite[Theorem~4]{Waterhouse}.
On the other hand, we treat disconnected groups, and non-affine groups and
schemes.
The action of finite groups is classical (see for example, 
\cite[Chapter~IV]{Fossum}), but the author does not know if the 
theorem for this case is in the literature, though it is not so 
difficult.

Note that in Theorem~\ref{main2.thm}, even if $X$ is a normal variety,
$Y$ may not be locally Noetherian (but is still locally Krull), 
as Nagata's counterexample \cite{Nagata} shows.
In fact, there are some operations on rings such that under which Krull domains
are closed, but Noetherian normal domains are not.
Let $R$ be a domain.
For a subfield $K$ of the field of quotients $Q(R)$ of $R$, 
consider $K\cap R$.
If $R$ is Krull, then so is $K\cap R$.
Even if $R$ is a polynomial ring (in finitely many variables) 
over a subfield $k$ of $K\cap R$, 
$K\cap R$ may not be Noetherian \cite{Nagata}.
For a domain $R$, consider 
a finite extension field $L$ of $Q(R)$.
Let $R'$ be the integral closure of $R$ in $L$.
If $R$ is a Krull domain, then so is $R'$.
If $R$ is Noetherian, then $R'$ is a Krull domain (Mori--Nagata theorem, 
see \cite[(4.10.5)]{SH}).
Even if $R$ is a (Noetherian) regular local ring, $R'$ may not be Noetherian.
Indeed, the ring $R$ and $L=Q(R[d])$ in 
\cite[Appendix, Example~5]{Nagata2} gives such an example (this is one of 
so-called bad Noetherian rings.
If $R$ is Japanese, then clearly $R'$ is Noetherian).
If $Z$ is an integral 
quasi-compact locally Krull scheme, then $\Gamma(Z,\O_Z)$ is
a Krull domain (Lemma~\ref{finite-direct-Krull.thm}).
In particular, for a normal projective variety $Y$ and its
Cartier divisors $D_1,\ldots,D_n$, the multi-section ring
\[
\bigoplus_{\lambda\in \Bbb Z^n}\Gamma(Y,\O_Y(\lambda_1D_1+\cdots+
\lambda_nD_n))t_1^{\lambda_1}\cdots 
t_n^{\lambda_n}
\]
is a Krull ring (see also \cite[Theorem~1.1 (1)]{EKW}), but
not always Noetherian \cite{Mukai}.

Thus locally Krull schemes arise in a natural way
in algebraic geometry and commutative algebra.
Despite of some technical difficulties, it would be worth discussing 
(equivariant) class groups in the framework of locally Krull schemes.

Returning to Theorem~\ref{main2.thm}, it is proved as follows.
As $\Cl(Y)$ is a subquotient of $\Cl(G,X)$, it suffices to show
that the kernel of the map $\alpha:\Cl(G,X)\rightarrow\Cl(X)$ is
finitely generated, where $\alpha$ is the map forgetting the $G$-action.

This problem is further reduced to a similar problem for Picard groups.
For a general $G$-scheme $X$ (not necessarily locally Krull), 
the equivariant Picard group $\Pic(G,X)$ is the set of isomorphism classes
of $G$-equivariant invertible sheaves on $X$.
The addition is given by $[\L]+[\L']=[\L\otimes_{\O_X}L']$.
So if $X$ is locally Krull, $\Pic(G,X)$ is a subgroup of $\Cl(G,X)$, 
and the kernel of the map $\rho:\Pic(G,X)\rightarrow \Pic(X)$ agrees with
$\Ker\alpha$ above.
So Theorem~\ref{main2.thm} follows from the following

\begin{trivlist}\item[\bf Theorem~\ref{main.thm}]
Let $k$ be a field, 
$G$ a smooth $k$-group scheme of finite type, and 
$X$ a reduced $G$-scheme which is quasi-compact and quasi-separated.
Assume that there is a $k$-scheme $Z$ of finite type and a 
dominating $k$-morphism $Z\rightarrow X$.
Then $H^1\alg(G,\O^\times)=\Ker(\rho:\Pic(G,X)\rightarrow\Pic(X))$ is
a finitely generated abelian group.
\end{trivlist}

Note that a reduced $k$-scheme of finite type $X$ is
automatically reduced, quasi-compact and quasi-separated,
admitting a dominating map from a finite-type scheme,
see Corollary~\ref{main-cor2.thm}.

The proof of this theorem utilizes the description of 
$H^1\alg(G,\O^\times)$ in \cite[Chapter~7]{Dolgachev}.
If $\varphi:X
\rightarrow Y$ is a $G$-invariant morphism such that $\O_Y\rightarrow
(\varphi_*\O_X)^G$ is an isomorphism, $\Pic(Y)$ is a subgroup of 
$\Pic(G,X)$ (Lemma~\ref{pic-injective.thm}).
So under the assumption of the theorem, if $\Pic(X)$ is finitely generated,
then $\Pic(G,X)$ and $\Pic(Y)$ are finitely generated
(Corollary~\ref{main-cor.thm}).

We also give some description on 
$H^i\alg(G,\O^\times)$ for $i\geq 2$ for connected $G$ 
(Proposition~\ref{connected-cohomology.thm}).

Section~2 is preliminaries on the notation and the terminologies.

Section~3 is dedicated to prove a five-term exact sequence involving
the map $\rho:\Pic(G,X)\rightarrow\Pic(X)^G$, where
$\Pic(X)^G$ is the kernel of the map $\Pic(X)\rightarrow \Pic(G\times X)$
given by $[\Cal L]\mapsto a^*[\Cal L]-p_2^*[\Cal L]$
($a:G\times X\rightarrow X$ is the action, and $p_2$ is the second 
projection), see Proposition~\ref{five-term.thm}.
The exact sequence also involves the ``algebraic $G$-cohomology group of 
$\O_X^\times$,'' denoted by $H^i\alg(G,\O^\times)$ for $i=1,2$, see
(\ref{group-cohomology.par}).

Although the author cannot find exactly the same exact sequence in the
literature,
it is more or less well-known.
The first three terms of the exact sequence is treated in 
\cite[Chapter~7]{Dolgachev} (the first four terms for the finite group
action is also treated there).
This exact sequence is important in discussing the kernel and the cokernel
of $\rho$.

In section~4, we prove Theorem~\ref{main.thm}.
We utilize the description $\Ker\rho\cong H^1\alg(G,\O^\times)$, and
reduce the problem to the action of a finite group scheme on a 
finite scheme.
We also give some relationship between $H^1\alg(G,\O^\times)$ and the
character group $\Cal X(G)$ in some special cases.
We also describe $H^i\alg(G,\O^\times)$ for higher $i$ for 
a connected group action.

Section~5 corresponds to the first purpose described above.
We define $\Cl(G,X)$ for $X$ locally Krull, and 
discuss some basics on (equivariant) class groups on locally Krull schemes.

In section~6, we prove Theorem~\ref{main2.thm}.

The author thanks 
Professor I.~Dolgachev,
Professor O.~Fujino,
Professor G.~Kemper,
Professor K.~Kurano,
Professor J.-i.~Nishimura,
and
Professor S.~Takagi
for valuable advice.

\section{Preliminaries}\label{prelimilaries}

\paragraph For a commutative ring $R$, $Q(R)$ denotes its total ring of
fractions.
That is, the localization $R_S$ of $R$, where $S$ is the set of 
nonzerodivisors of $R$.
In particular, if $R$ is an integral domain, $Q(R)$ is its field of 
fractions.

\paragraph In this paper, 
for a scheme $X$ and its subset $\Gamma$, the codimension
$\codim_X\Gamma$ of $\Gamma$ in $X$ is 
$\inf_{\gamma\in\Gamma}\dim \O_{X,\gamma}$ by definition (cf.~\cite[chapter~0,
(14.2.1)]{EGA-IV-1}).
The codimension of the empty set in $X$ is $\infty$.

\paragraph
Throughout this paper, let $S$ be a scheme.
For an $S$-group scheme $G$, a $G$-scheme means an $S$-scheme with
a (left) action of $G$.
We say that $f:X\rightarrow Y$ is a
$G$-morphism if $f$ is an $S$-morphism, $X$ and $Y$ are $G$-schemes, and
$f(gx)=gf(x)$ holds.
In this case, we also say that $X$ is a $(G,Y)$-scheme.
A $(G,Y)$-morphism $h:X\rightarrow X'$ is a morphism between $(G,Y)$-schemes
which is both a $G$-morphism and a $Y$-morphism.
We say that $f:X\rightarrow Y$ is a $G$-invariant morphism
if $f$ is a $G$-morphism and $G$ acts on $Y$ trivially.
If so, $f(gx)=f(x)$ holds.

\paragraph\label{fpqc.par}
A morphism of schemes $\varphi:X\rightarrow Y$ is fpqc if it is 
faithfully flat, and for any quasi-compact open subset $V$ of $Y$, 
there exists some quasi-compact open subset $U$ of $X$ such that 
$\varphi(U)=V$.
For basics on fpqc property, see \cite[(2.3.2)]{Vistoli}.

\paragraph Let $Y$ be a $G$-scheme on which $G$-acts trivially.
A $(G,Y)$-scheme $\varphi:X\rightarrow Y$ is said to be a trivial
$G$-bundle if $X$ is $(G,Y)$-isomorphic to the second projection
$p_2:G\times Y\rightarrow Y$.

\begin{definition}
  We say that $\varphi:X\rightarrow Y$ is a principal $G$-bundle
 (or a $G$-torsor) (with respect to the fpqc topology) if 
 it is $G$-invariant, and there exists some fpqc $S$-morphism $Y'\rightarrow Y$ 
 such that the base change $X'=Y'\times_Y X\rightarrow Y'$ is a 
 trivial $G$-bundle.
\end{definition}

\begin{lemma}[{\cite[(4.43)]{Vistoli}}]\label{pfb-equiv.thm}
A $G$-invariant morphism $\varphi:X\rightarrow Y$ is a principal $G$-bundle
if and only if there exists some fpqc morphism $Y'\rightarrow Y$ which 
factors through $\varphi$, and the map $\Phi:G\times X\rightarrow 
X\times_Y X$ given by $\Phi(g,x)=(gx,x)$ is an isomorphism.
\qed
\end{lemma}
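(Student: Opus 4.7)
The plan is to prove both implications via fpqc descent, exploiting the fact that the map $\Phi$ is compatible with base change and that being a trivial $G$-bundle is both preserved by and detected under fpqc base change.

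For the forward direction, I would begin with the fpqc morphism $Y'\rightarrow Y$ furnished by the definition of principal $G$-bundle, so that the base change $X':=Y'\times_Y X\rightarrow Y'$ is $(G,Y')$-isomorphic to the second projection $p_2:G\times Y'\rightarrow Y'$. Pulling back the unit section $(e,\id_{Y'})$ of $p_2$ gives a section $s':Y'\rightarrow X'$, and composing with the projection $X'\rightarrow X$ produces a morphism $Y'\rightarrow X$ whose composition with $\varphi$ equals the original $Y'\rightarrow Y$; this supplies the required factorization. To show $\Phi$ is an isomorphism, I would note that its formation is stable under base change, so by fpqc descent it suffices to verify the analogous statement after pulling back along $Y'\rightarrow Y$. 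Under the $(G,Y')$-trivialization this reduces to the manifestly invertible map $G\times G\times Y'\rightarrow(G\times Y')\times_{Y'}(G\times Y')$ sending $(g,h,y)$ to $(gh,h,y)$.

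For the backward direction, let $f:Y'\rightarrow Y$ be the given fpqc morphism with a lift $s:Y'\rightarrow X$ satisfying $\varphi\circ s=f$. The base change yields $\varphi':X'\rightarrow Y'$ together with a canonical section $s':Y'\rightarrow X'$ induced by $s$. I would then define a $(G,Y')$-morphism
\[
\Psi:G\times Y'\rightarrow X',\qquad \Psi(g,y)=g\cdot s'(y),
\]
and prove $\Psi$ is an isomorphism; this shows $X'\rightarrow Y'$ is a trivial $G$-bundle and therefore $\varphi$ is a principal $G$-bundle. To construct the inverse, I would pull back $\Phi$ by $Y'\rightarrow Y$ to obtain an isomorphism $\Phi':G\times X'\rightarrow X'\times_{Y'}X'$. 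Precomposing $(\Phi')^{-1}$ with the morphism $X'\rightarrow X'\times_{Y'}X'$ sending $x$ to $(x,s'(\varphi'(x)))$ and projecting to the $G$-factor yields a morphism $X'\rightarrow G$; paired with $\varphi'$ this assembles into a morphism $X'\rightarrow G\times Y'$, which one checks is inverse to $\Psi$ by unraveling the definition $\Phi(g,x)=(gx,x)$.

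The main obstacle, such as it is, lies in the backward direction: one must verify that the maps $\Psi$ and its putative inverse really are mutually inverse, and that $\Psi$ is $G$-equivariant. Both are short diagram chases relying only on the associativity of the $G$-action, the identity $\varphi'\circ s'=\id_{Y'}$, and the defining formula for $\Phi$; no substantive technical difficulty arises. The forward direction is routine once one observes that both conclusions can be checked after fpqc base change.
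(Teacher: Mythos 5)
Your argument is correct; the paper itself gives no proof of this lemma but simply cites Vistoli, (4.43), and your two-directional descent argument (trivialize after base change to get the factorization and check $\Phi$ there; conversely use the section $s'$ and the inverse of the pulled-back $\Phi$ to build the trivialization $\Psi$) is essentially the standard proof of that cited result. No gaps: the key points — that $\Phi$ base-changes to $(g,h,y)\mapsto(gh,h,y)$ under the trivialization, that isomorphisms descend along fpqc morphisms, and that $\varphi'\circ s'=\id_{Y'}$ makes $\Psi$ and its candidate inverse mutually inverse — all check out.
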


\section{The fundamental five-term exact sequence}

\paragraph
Let $(\Cal C,\O_{\Cal C})$ be a ringed site.
An $\O_{\Cal C}$-module $\Cal L$ is said to be invertible if for any 
$c\in\Cal C$ there exists some covering $(c_\lambda\rightarrow c)$ of $c$
such that for each $\lambda$, $\Cal L|_{c_\lambda}\cong \O_{\Cal C}|_{c_\lambda}$.
The set of isomorphism classes of invertible sheaves
is denoted by $\Pic(\Cal C)$.
It is an (additive) abelian group by the operation $[\Cal L]+[\Cal M]
:=[\Cal L\otimes_{\Cal O_{\Cal C}}\Cal M]$.
$\Pic(\Cal C)$ is called the Picard group of $\Cal C$.

An $\O_{\Cal C}$-module $\M$ is said to be quasi-coherent if for any 
$c\in\Cal C$, there exists some covering $(c_\lambda\rightarrow c)$ of $c$
such that for each $\lambda$, there exists some exact sequence
of $\Cal O_{\Cal C}|_{c_\lambda}$-modules
\[
\Cal F_1\rightarrow \Cal F_0\rightarrow \M|_{c_\lambda}\rightarrow 0
\]
with $\Cal F_1$ and $\Cal F_0$ free (where a free sheaf means a
(possibly infinite) direct sum of $\Cal O_{\Cal C}|_{c_\lambda}$).
Obviously, an invertible sheaf is quasi-coherent.

\paragraph
Let $\Sh(\Cal C)$ and $\Ps(\Cal C)$ denote the categories of abelian 
sheaves and presheaves, respectively.
For $\M\in\Sh(\Cal C)$, the Ext-group $\Ext_{\Sh(\Cal C)}^i(a\Bbb Z,\M)$ is
denoted by $H^i(\Cal C,\M)$, where $\Bbb Z$ is the constant presheaf on 
$\Cal C$ and $a\Bbb Z$ its sheafification.
Similarly, for $\N\in\Ps(\Cal C)$, $\Ext_{\Ps(\Cal C)}^i(\Bbb Z,\N)$ is
denoted by $H^i_{\Ps}(\Cal C,\N)$.
Let $q:\Sh(\Cal C)\rightarrow \Ps(\Cal C)$ be the inclusion.
As it has the exact left adjoint (the sheafification $a$), it is left exact,
and preserves injectives.
Its right derived functor $(R^iq)(\Cal M)$ is denoted by $\uH^i(\Cal M)$.
As $\Hom_{\Sh(\Cal C)}(a\Bbb Z,?)=\Hom_{\Ps(\Cal C)}(\Bbb Z,?)\circ q$, 
a Grothendieck spectral sequence
\begin{equation}\label{GSS.eq}
E^{p,q}_2=H^{p}_{\Ps}(\Cal C,\uH^q(\Cal M))\Rightarrow H^{p+q}(\Cal C,\Cal M)
\end{equation}
is induced.

\paragraph
Let $\O^\times$ denote the presheaf of abelian group defined by
$\Gamma(c,\O^\times)=\Gamma(c,\O_{\Cal C})^\times$.
It is a sheaf.
The following is due to de~Jong and others \cite[(20.7.1)]{SP}.

\begin{lemma}\label{deJong.thm}
There is an isomorphism
$H^1(\Cal C,\O^\times)\cong \Pic(\Cal C)$.
\end{lemma}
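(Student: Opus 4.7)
The plan is to exhibit a natural bijection between $\Pic(\Cal C)$ and the set of isomorphism classes of $\O^\times$-torsors on $\Cal C$, and then invoke the classical identification of the latter with $H^1(\Cal C,\O^\times)$. (I would not try to read the lemma off directly from the Grothendieck spectral sequence (\ref{GSS.eq}), since the presheaf-cohomology terms do not vanish for trivial reasons; the torsor interpretation is cleaner.)

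First, given an invertible $\O_{\Cal C}$-module $\L$, I would attach to it the sheaf
\[
\mathcal{T}(\L)\;:=\;\underline{\mathrm{Isom}}_{\O_\Cal C}(\O_\Cal C,\L),
\]
i.e., the sheafification of $c\mapsto \mathrm{Isom}_{\O_\Cal C|_c}(\O_\Cal C|_c,\L|_c)$. Since $\L$ is locally isomorphic to $\O_\Cal C$, this sheaf is locally nonempty; and because the $\O_\Cal C$-linear automorphism sheaf of $\O_\Cal C$ is $\O^\times$, the action of $\O^\times$ by postcomposition makes $\mathcal{T}(\L)$ into an $\O^\times$-torsor. One checks $\mathcal{T}(\L\otimes_{\O_\Cal C}\M)\cong \mathcal{T}(\L)\wedge^{\O^\times}\mathcal{T}(\M)$ (the contracted product, which is the group operation on the torsor side), and $\mathcal{T}(\O_\Cal C)\cong\O^\times$ is the trivial torsor. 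Thus $[\L]\mapsto[\mathcal{T}(\L)]$ is a group homomorphism from $\Pic(\Cal C)$ to the group of $\O^\times$-torsor classes.

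Conversely, given an $\O^\times$-torsor $\Cal P$, I would form the twisted sheaf $\Cal P\wedge^{\O^\times}\O_\Cal C$, associated to the multiplication action of $\O^\times$ on $\O_\Cal C$. Local triviality of $\Cal P$ shows that this sheaf is locally isomorphic to $\O_\Cal C$, hence invertible. Showing that the two constructions $\L\mapsto\mathcal{T}(\L)$ and $\Cal P\mapsto\Cal P\wedge^{\O^\times}\O_\Cal C$ are mutually inverse up to canonical isomorphism establishes the desired bijection $\Pic(\Cal C)\cong\{\O^\times\text{-torsors}\}/\!\cong$ of abelian groups.

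The real content — the main obstacle, if one is not willing to quote it — is the classical identification $H^1(\Cal C,\A)\cong\{\A\text{-torsors}\}/\!\cong$ valid for any abelian sheaf $\A$ on a site. One proves this by choosing an embedding $\O^\times\hookrightarrow \I$ into an injective sheaf, identifying $H^1(\Cal C,\O^\times)$ with the cokernel of $\I\to \I/\O^\times$ evaluated globally, and associating to each class the torsor of local liftings along $\I\to \I/\O^\times$; an equivalent Čech-theoretic route matches $1$-cocycles on a covering directly with the transition functions of the torsor (and, ultimately, of the invertible sheaf). Since this is cited from \cite{SP}, I would take it as a black box and conclude by composing it with the bijection constructed above.
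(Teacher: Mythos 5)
Your argument is correct and is essentially the intended one: the paper offers no proof of this lemma at all, simply citing \cite[(20.7.1)]{SP}, and the identification of $\Pic(\Cal C)$ with isomorphism classes of $\O^\times$-torsors followed by the classical bijection between torsors and $H^1$ is exactly the route taken in that reference. One cosmetic remark: $c\mapsto\mathrm{Isom}_{\O_{\Cal C}|_c}(\O_{\Cal C}|_c,\L|_c)$ is already a sheaf (it is the subsheaf of $\L$ consisting of sections that are locally generators, a local condition), so no sheafification is needed in your definition of $\mathcal T(\L)$.
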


\paragraph
Let $(\Delta)$ be the full subcategory of the category of ordered sets
whose object set $\ob((\Delta))$ is $\{[0],[1],[2],\ldots\}$,
where $[n]=\{0<1<\cdots<n\}$.
A simplicial $S$-scheme is a contravariant functor from $(\Delta)$ to
the category of $S$-schemes $\Sch/S$, by definition.

We denote the subcategory of $(\Delta)$ such that
the object set is the same, but the morphism is restricted to injective maps
by $(\Delta)\mon$.

Let $X_\bullet$ be a $((\Delta)^{\mon})\op$-diagram of $S$-schemes,
that is, a contravariant functor from 
$(\Delta)^{\mon}$ to $\Sch/S$.
Then there is a projective resolution 
\[
\Bbb L=
\cdots
\xrightarrow{\partial_2}
L_1\Bbb Z_1
\xrightarrow{\partial_1}
L_0\Bbb Z_0
\rightarrow
\Bbb Z 
\rightarrow 
0
\]
of the constant presheaf $\Bbb Z$ on the Zariski site $\Zar(X_\bullet)$ 
of $X_\bullet$, see \cite[(4.3)]{ETI}.
Where $(?)_i:\Sh(\Zar(X_\bullet))\rightarrow\Sh(\Zar(X_i))$ is the restriction
functor \cite[(4.5)]{ETI}, and $L_i$ its left adjoint (see \cite[(5.1)]{ETI}).
$\partial_i: L_i\Bbb Z_i\rightarrow L_{i-1}\Bbb Z_{i-1}$ is the alternating sum
$u_0-u_1+u_2-\cdots+(-1)^iu_i$, where 
$u_j$ corresponds to the $j$th inclusion map 
\[
\Bbb Z_i 
\rightarrow 
(L_{i-1}\Bbb Z_{i-1})_i=\bigoplus_{j=0}^id_j^*(\Bbb Z_{i-1})
=\bigoplus_{j=0}^i \Bbb Z_i
\]
under the adjoint isomorphism of the adjoint pair $(L_i,(?)_i)$.
The exactness of the complex is checked easily after restricting to
each dimension by $(?)_i$.
Indeed, the complex is nothing but
\[
\cdots
\rightarrow
\bigoplus_{\phi\in\Hom([1],[i])}\Bbb Z\cdot \phi
\rightarrow
\bigoplus_{\phi\in\Hom([0],[i])}\Bbb Z\cdot \phi
\rightarrow
\bigoplus_{\phi\in\Hom(\emptyset,[i])}\Bbb Z\cdot \phi
\rightarrow
0
\]
when it is evaluated at $(i,U)$.
This complex computes the reduced homology group of the $i$-simplex, so
it is exact.

\begin{lemma}
For any $\Cal N\in\Ps(\Zar(X_\bullet))$, 
$H^i_{\Ps}(\Zar(X_\bullet),\Cal N)$ is the $i$th cohomology group of the 
complex
\[
0
\rightarrow
\Gamma(X_0,\Cal N_0)
\xrightarrow{d_0-d_1}
\Gamma(X_1,\Cal N_1)
\xrightarrow{d_0-d_1-d_2}
\Gamma(X_2,\Cal N_2)
\rightarrow\cdots.
\]
\end{lemma}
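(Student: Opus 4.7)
The plan is to apply $\Hom_{\Ps(\Zar(X_\bullet))}(-,\Cal N)$ to the projective resolution $\Bbb L\to\Bbb Z$ constructed in the paragraph immediately preceding the lemma. Since $\Bbb L$ is a resolution of $\Bbb Z$ by projectives in $\Ps(\Zar(X_\bullet))$, the resulting cochain complex computes $\Ext^i_{\Ps(\Zar(X_\bullet))}(\Bbb Z,\Cal N)=H^i_{\Ps}(\Zar(X_\bullet),\Cal N)$ by definition. The remaining work is to identify this abstract complex, term by term and differential by differential, with the explicit complex in the statement.

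For the identification of terms, I would use the adjunction $(L_i,(?)_i)$ to compute
\[
\Hom_{\Ps(\Zar(X_\bullet))}(L_i\Bbb Z_i,\Cal N)\;\cong\;\Hom_{\Ps(\Zar(X_i))}(\Bbb Z_i,\Cal N_i)\;\cong\;\Gamma(X_i,\Cal N_i),
\]
where the last isomorphism evaluates a map out of the constant presheaf $\Bbb Z_i$ at the section $1$. This recovers the $i$th group appearing in the target complex.

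For the identification of the differentials, recall that $\partial_{i+1}:L_{i+1}\Bbb Z_{i+1}\to L_i\Bbb Z_i$ is by construction the alternating sum $u_0-u_1+\cdots+(-1)^{i+1}u_{i+1}$, where each $u_j$ is the image under the adjunction of the inclusion $\Bbb Z_{i+1}\to d_j^*(\Bbb Z_i)$ corresponding to the $j$th face operator $d_j:X_{i+1}\to X_i$ of the simplicial diagram. Tracing $u_j$ through the adjunction isomorphism above shows that the induced map on $\Hom(-,\Cal N)$ is simply pullback of sections along $d_j$; hence $\partial_{i+1}^{*}$ becomes the alternating sum $\sum_{j=0}^{i+1}(-1)^j d_j^{*}$ of these pullbacks, which is the coboundary asserted in the statement (the $d_0-d_1-d_2$ written there is an obvious misprint for $d_0-d_1+d_2$).

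The only step requiring care is the second one: verifying that the unit/counit of the adjunction $(L_i,(?)_i)$ interacts with the maps $u_j$ exactly so as to translate them into pullbacks along $d_j$ with the correct signs. This is a purely formal verification, consisting of unwinding the definition of $L_i$ and of the inclusions $u_j$ recorded in the preceding paragraph, and I do not expect any genuine obstacle beyond careful sign bookkeeping.
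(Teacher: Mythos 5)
Your proposal is correct and is exactly the paper's argument: the paper's proof is the one-line observation that $H^i_{\Ps}(\Zar(X_\bullet),\Cal N)=\Ext^i_{\Ps(\Zar(X_\bullet))}(\Bbb Z,\Cal N)=H^i(\Hom_{\Ps(\Zar(X_\bullet))}(\Bbb L,\Cal N))$, and you have merely spelled out the adjunction bookkeeping that the author leaves implicit. Your remark that the displayed differential $d_0-d_1-d_2$ should read $d_0-d_1+d_2$ is also right, as confirmed by the sign convention $\partial_i=u_0-u_1+\cdots+(-1)^iu_i$ and by the complex written out later in (\ref{group-cohomology.par}).
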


\begin{proof}
Follows from the isomorphism
\[
H^i_{\Ps}(\Zar(X_\bullet),\Cal N)=
\Ext^i_{\Ps(\Zar(X_\bullet))}(\Bbb Z,\Cal N)
=
H^i(\Hom_{\Ps(\Zar(X_\bullet))}(\Bbb L,\Cal N)).
\]
\end{proof}

\paragraph\label{group-cohomology.par}
Let $S$ be a scheme, and $G$ an $S$-group scheme.
Let $X$ be a $G$-scheme.
We can associate a simplicial scheme $B_G(X)$ to $X$,
see \cite[(29.2)]{ETI}.
Its restriction to $(\Delta)\mon$ is denoted by $B_G'(X)$.

Consider $X_\bullet=B_G'(X)$.
For $\Cal N\in \Ps(G,X)=\Ps(\Zar(B_G'(X)))$, we denote
$H^i_{\Ps}(\Zar(B_G'(X)),\Cal N)$ by $H^i\alg(G,\Cal N)$.
It is the $i$th cohomology group of the complex 
$\Hom_{\Ps(\Zar(B_G'(X)))}
(\Bbb L,\Cal N)$:
\[
0\rightarrow \Gamma(X,\Cal N_0)
\xrightarrow{d_0-d_1}\Gamma(G\times X,\Cal N_1)
\xrightarrow{d_0-d_1+d_2}\Gamma(G\times G\times X,\Cal N_2)
\rightarrow\cdots,
\]
where 
\[
d_i(g_{n-1},\ldots,g_{0},x)=
\left\{
\begin{array}{ll}
(g_{n-1},\ldots,g_1,g_0x) & (i=0) \\
(g_{n-1},\ldots,g_ig_{i-1},\ldots,g_0,x) & (0<i<n) \\
(g_{n-2},\ldots,g_0,x) & (i=n)
\end{array}
\right..
\]
We denote the group of $i$-cocycles (resp.\ $i$-coboundaries) of the
complex by $Z^i\alg(G,\Cal N)$ (resp.\ $B^i\alg(G,\Cal N)$).

\paragraph
Let $X$ be as above.
Then we denote $\Pic(\Zar(B_G'(X)))$ by $\Pic(G,X)$,
and call it the $G$-equivariant Picard group of $X$.
By \cite[Lemma~9.4]{ETI}, the restriction
$\Pic(G,X)=\Pic(\Zar(B_G'(X)))\rightarrow\Pic(\Zar(B_G^M(X)))$ is
an isomorphism,
where $\Delta_M$ is the full subcategory of $(\Delta)\mon$ with
the object set $\{[0],[1],[2]\}$, and
$B_G^M(X)$ is the restriction of $B_G'(X)$ to $\Delta_M$.

\paragraph
A $(G,\O_X)$-module is a module sheaf over the ringed site
$\Zar(B_G^M(X))$ by definition.

Note that $\Pic(G,X)$ is the set of isomorphism classes of
quasi-coherent $(G,\O_X)$-modules which are invertible sheaves as
$\O_X$-modules.
The addition of $\Pic(G,X)$ is given by $[\L]+[\L']=[\L\otimes_{\O_X}\L']$.

\paragraph
If $X$ is a $G$-scheme, then there is an obvious homomorphism
$\rho:\Pic(G,X)\rightarrow \Pic(X)$, forgetting the $G$-action.
If $Y$ is an $S$-scheme with a trivial $G$-action, then 
$\tau:\Pic(Y)\rightarrow \Pic(G,Y)$ such that $\tau[\Cal L]=[\Cal L']$ 
is induced, where $\Cal L'$ is $\Cal L$ with the trivial $G$-action.
So $\rho\circ\tau=\id_{\Pic(Y)}$.
If $\varphi:X\rightarrow Y$ is a $G$-morphism, then
$\varphi^*:\Pic(G,Y)\rightarrow \Pic(G,X)$ given by $\varphi^*[\Cal L]
=[\varphi^*\Cal L]$ is induced.
By abuse of notation, 
the map without the $G$-action $\Pic(Y)\rightarrow \Pic(X)$ is also
denoted by $\varphi^*$.
Also, for a $G$-invariant morphism $\varphi:X\rightarrow Y$,
$\varphi^*\circ\tau:\Pic(Y)\rightarrow \Pic(G,X)$ is also denoted by
$\varphi^*$.

\begin{lemma}\label{pic-injective.thm}
Let $\varphi:X\rightarrow Y$ be a $G$-invariant morphism.
If $\Cal O_Y\rightarrow(\varphi_*\Cal O_X)^G$ is an isomorphism, then
$\varphi^*:\Pic(Y)\rightarrow \Pic(G,X)$　is injective.
\end{lemma}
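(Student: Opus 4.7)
The plan is to show that if $\Cal L\in\Pic(Y)$ satisfies $\varphi^*\Cal L\cong\O_X$ as a $(G,\O_X)$-module, then $\Cal L\cong\O_Y$. The strategy is to recover $\Cal L$ from $\varphi_*\varphi^*\Cal L$ by first applying the projection formula and then taking $G$-invariants, using the assumption $\O_Y\cong(\varphi_*\O_X)^G$ to kill the structure sheaf factor.

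First I would apply $\varphi_*$ to the assumed $(G,\O_X)$-isomorphism $\varphi^*\Cal L\cong\O_X$ to obtain an isomorphism $\varphi_*\varphi^*\Cal L\cong\varphi_*\O_X$ of quasi-coherent $\O_Y$-modules equivariant for the (trivial) $G$-action on $Y$. Next, since $\Cal L$ is invertible and the $G$-action on $Y$ is trivial, the projection formula gives a $G$-equivariant isomorphism
\[
\varphi_*\varphi^*\Cal L\;\cong\;\Cal L\otimes_{\O_Y}\varphi_*\O_X,
\]
where $\Cal L$ carries the trivial $G$-equivariant structure. Combining, I get $\Cal L\otimes_{\O_Y}\varphi_*\O_X\cong\varphi_*\O_X$ as $G$-equivariant $\O_Y$-modules.

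Now I would take $G$-invariants on both sides. The right-hand side becomes $(\varphi_*\O_X)^G\cong\O_Y$ by hypothesis. For the left-hand side, I claim that tensoring with an invertible sheaf bearing the trivial $G$-action commutes with taking $G$-invariants, yielding
\[
(\Cal L\otimes_{\O_Y}\varphi_*\O_X)^G\;\cong\;\Cal L\otimes_{\O_Y}(\varphi_*\O_X)^G\;\cong\;\Cal L\otimes_{\O_Y}\O_Y\;\cong\;\Cal L.
\]
Combining these identifications gives $\Cal L\cong\O_Y$, which is the desired conclusion.

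The only delicate step is the commutation of $G$-invariants with $\Cal L\otimes_{\O_Y}(-)$. This is a local question on $Y$: on a Zariski open $V\subseteq Y$ over which $\Cal L$ trivializes, a choice of trivialization gives a $G$-equivariant isomorphism $\Cal L|_V\otimes_{\O_V}(\varphi_*\O_X)|_V\cong(\varphi_*\O_X)|_V$ (since the $G$-action on $\Cal L$ is trivial), and applying $(-)^G$ yields the claimed identity on $V$; these patch because the transition functions live in $\O_Y^\times\subseteq((\varphi_*\O_X)^G)^\times$ and so are $G$-invariant. I expect this verification, together with a careful identification of the projection-formula isomorphism as $G$-equivariant, to be the only real content beyond formal manipulation.
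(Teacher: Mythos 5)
Your proposal is correct and follows essentially the same route as the paper: the paper shows directly that the canonical map $\Cal L\rightarrow(\varphi_*\varphi^*\Cal L)^G$ is an isomorphism by reducing, Zariski-locally on $Y$, to the case $\Cal L\cong\O_Y$ (which is exactly the hypothesis), and your projection-formula plus invariants computation is just an explicit unpacking of that same local reduction. The only cosmetic remark is that you need not worry about "patching," since the comparison map $\Cal L\otimes_{\O_Y}(\varphi_*\O_X)^G\rightarrow(\Cal L\otimes_{\O_Y}\varphi_*\O_X)^G$ is already globally defined and you are merely checking locally that it is an isomorphism.
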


\begin{proof}
Note that the canonical
map $\Cal L \rightarrow (\varphi_*\varphi^*\Cal L)^G$ is an isomorphism.
Indeed, to check this, 
as the question is local, we may assume that $\Cal L\cong\O_Y$.
But this case is nothing but the assumption itself.
So if $\varphi^*\Cal L\cong \O_X$, then
\[
\Cal L\cong (\varphi_*\varphi^*\Cal L)^G\cong(\varphi_*\Cal O_X)^G\cong\O_Y,
\]
and the assertion follows immediately.
\end{proof}

\paragraph
We denote the category of quasi-coherent $(G,\O_X)$-modules by
$\Qch(G,X)$.

\begin{lemma}\label{pfb-pic.thm}
Let $\varphi:X\rightarrow Y$ be a principal $G$-bundle.
Then $\varphi^*: \Qch(Y)\rightarrow\Qch(G,X)$ is an equivalence.
The induced map $\varphi^*:\Pic(Y)\rightarrow\Pic(G,X)$ given by
$\varphi^*[\Cal L]=[\varphi^*\Cal L]$ is an isomorphism of abelian groups.
\end{lemma}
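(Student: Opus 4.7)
The plan is to deduce both statements from fpqc descent, with Lemma~\ref{pfb-equiv.thm} supplying the bridge between descent data and equivariant structure.

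First I would verify that $\varphi:X\rightarrow Y$ is itself fpqc. By definition there is an fpqc cover $Y'\rightarrow Y$ such that the base change $Y'\times_Y X\rightarrow Y'$ is isomorphic to the projection $p_2:G\times Y'\rightarrow Y'$; this projection is faithfully flat (flat because $G\rightarrow S$ is flat, surjective because the identity section of $G$ supplies a section of $p_2$) and easily satisfies the quasi-compactness condition of~(\ref{fpqc.par}). Since the fpqc property descends along fpqc covers, $\varphi$ is fpqc. I then invoke fpqc descent for quasi-coherent sheaves: $\varphi^*:\Qch(Y)\rightarrow\Qch(X)$ is fully faithful, and its essential image consists of pairs $(\F,\theta)$ with $\F\in\Qch(X)$ and $\theta:p_1^*\F\xrightarrow{\sim} p_2^*\F$ on $X\times_Y X$ satisfying the usual cocycle condition on $X\times_Y X\times_Y X$. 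By Lemma~\ref{pfb-equiv.thm} the map $\Phi:G\times X\rightarrow X\times_Y X$, $(g,x)\mapsto(gx,x)$, is an isomorphism sending the second projection to $p_2$ and the first projection to the action $a$; using $\Phi$ and the group law, the triple fibered product $X\times_Y X\times_Y X$ is canonically identified with $G\times G\times X$ in a way compatible with the face maps of $B_G^M(X)$. Under these identifications a descent datum for $\varphi$ becomes precisely a quasi-coherent module sheaf on $\Zar(B_G^M(X))$, i.e., an object of $\Qch(G,X)$, yielding the equivalence $\varphi^*:\Qch(Y)\rightarrow\Qch(G,X)$.

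For the Picard statement, invertibility of a quasi-coherent sheaf on $Y$ can be checked after the faithfully flat pullback $\varphi^*$, so $\F\in\Qch(Y)$ is invertible iff $\varphi^*\F$ is invertible as an $\O_X$-module, which is exactly the condition that the corresponding $(G,\O_X)$-module represent a class in $\Pic(G,X)$. The equivalence therefore restricts to a bijection $\varphi^*:\Pic(Y)\rightarrow\Pic(G,X)$, and because $\varphi^*$ is symmetric monoidal (preserving $\otimes$ and the unit $\O$) this bijection is a group homomorphism, hence an isomorphism. The main technical point I expect to wrestle with is the level-$2$ simplicial comparison: showing that the descent cocycle on $X\times_Y X\times_Y X$, transported through $\Phi$ and its analogue in one higher degree, matches the cocycle condition encoded by the face maps of $B_G^M(X)$. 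This is where associativity of the $G$-action enters, and once this bookkeeping is done the rest of the argument is formal fpqc descent.
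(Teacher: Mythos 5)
Your argument is correct and takes essentially the same route as the paper: the paper's proof is simply a citation of Vistoli~(4.46) applied to the stack of quasi-coherent sheaves over $\Sch/S$, which is precisely the descent-along-a-torsor equivalence you prove by hand (fpqc descent along $\varphi$ together with the identification, via $\Phi$ and the group law, of the \v{C}ech nerve of $\varphi$ with $B_G^M(X)$). The passage to Picard groups is then the same formal observation in both versions, namely that invertibility is fpqc-local and $\varphi^*$ is monoidal.
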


\begin{proof}
\cite[(4.46)]{Vistoli} applied to the stack $\Cal F\rightarrow \Sch/S$ of
quasi-coherent sheaves, $\varphi^*:\Qch(Y)\rightarrow\Qch(G,X)$ is
an equivalence.
This shows that $\varphi^*:\Pic(Y)\rightarrow\Pic(G,X)$ is bijective.
\end{proof}

\begin{proposition}\label{five-term.thm}
There is an exact sequence
\begin{multline*}
0\rightarrow H^1\alg(G,\O^\times)\rightarrow 
\Pic(G,X)
\xrightarrow{\rho}
\Pic(X)^G
\rightarrow\\
H^2\alg(G,\O^\times)
\rightarrow
H^2(\Zar(B_G'(X)),\O^\times),
\end{multline*}
where 
\[
\Pic(X)^G=\{[\Cal L]\in\Pic(X)\mid a^*\Cal L\cong p_2^*\Cal L\},
\]
and $\rho$ is the map forgetting the $G$-action, as before.
\end{proposition}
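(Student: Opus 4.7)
My plan is to derive the sequence as the standard five-term exact sequence of the Grothendieck spectral sequence (\ref{GSS.eq}) applied to the site $\Cal C=\Zar(B_G'(X))$ with coefficient sheaf $\Cal M=\O^\times$. Any first-quadrant cohomological spectral sequence $E^{p,q}_2\Rightarrow H^{p+q}$ gives rise to an exact sequence
\[
0\to E^{1,0}_2\to H^1\to E^{0,1}_2\xrightarrow{d_2} E^{2,0}_2\to H^2,
\]
so the task reduces to identifying each of these five groups and checking that the second arrow is the forgetful map $\rho$.

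Since $\O^\times$ is already a sheaf, $\uH^0(\O^\times)=q(\O^\times)$ coincides with $\O^\times$ viewed as a presheaf, so
\[
E^{p,0}_2=H^p_{\Ps}(\Zar(B_G'(X)),\O^\times)=H^p\alg(G,\O^\times)
\]
for $p=1,2$ by the definition in (\ref{group-cohomology.par}). Similarly, Lemma~\ref{deJong.thm} identifies $H^1(\Zar(B_G'(X)),\O^\times)$ with $\Pic(\Zar(B_G'(X)))=\Pic(G,X)$. For the middle term $E^{0,1}_2$, Lemma~\ref{deJong.thm} applied to the subsites $\Zar(X_i)$ shows that the presheaf $\uH^1(\O^\times)$ sends an open $U\subset X_i=G^i\times X$ to $\Pic(U)$. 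Thus $E^{0,1}_2$ is the kernel of the map $d_0^*-d_1^*:\Pic(X)\to \Pic(G\times X)$, and since the two face operators $d_0,d_1:G\times X\to X$ of $B_G'(X)$ are the action $a$ and the second projection $p_2$, this kernel is precisely $\Pic(X)^G$.

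It remains to check that the edge homomorphism $\Pic(G,X)=H^1\to E^{0,1}_2\subset\Pic(X)$ agrees with the forgetful map $\rho$. Unpacking the spectral sequence via a Cartan--Eilenberg resolution of $\O^\times$, this edge map sends the class of an equivariant invertible sheaf $\Cal L$ on $\Zar(B_G'(X))$ to the class of its dimension-zero component $\Cal L_0\in\Pic(X)$, which is exactly $\rho$. As a by-product one recovers the fact that $\rho$ lands in $\Pic(X)^G$, since the equivariance datum on $\Cal L$ furnishes an isomorphism $a^*\Cal L_0\cong p_2^*\Cal L_0$ on $G\times X$. The main technical point, and the natural source of difficulty, is exactly this matching of the edge map with $\rho$: everything else reduces to termwise identifications using the definitions and Lemma~\ref{deJong.thm}.
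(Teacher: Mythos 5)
Your proposal is correct and follows essentially the same route as the paper: the paper's proof is precisely the five-term exact sequence of the Grothendieck spectral sequence $E^{p,q}_2=H^p\alg(G,\uH^q(\O^\times))\Rightarrow H^{p+q}(\Zar(B_G'(X)),\O^\times)$ together with the identifications supplied by Lemma~\ref{deJong.thm}. You merely spell out the termwise identifications and the edge-map comparison that the paper leaves as ``immediate.''
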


\begin{proof}
Consider the spectral sequence
\[
E^{p,q}_2=H^p\alg(G,\uH^q(\O^\times))\Rightarrow H^{p+q}(\Zar(B_G'(X)),\O^\times)
\]
and its five-term exact sequence
\[
0\rightarrow E^{1,0}_2 \rightarrow E^1 \rightarrow E^{0,1}_2
\rightarrow E^{2,0}_2 \rightarrow E^2.
\]
The result follows from Lemma~\ref{deJong.thm} immediately.
\end{proof}

\section{Main result}

\paragraph
Let $k$ be a field, and $V$ and $W$ be $k$-vector spaces.
Let $\alpha$ be an element of $V\otimes_k W$.
Let $\Phi_V:V\otimes_k W\otimes_k W^*\rightarrow V$ and
$\Phi_W:V\otimes_k W\otimes_k V^*\rightarrow W$ be the
map given by $\Phi_V(v\otimes w\otimes w^*)=(w^*(w))v$
and $\Phi_W(v\otimes w\otimes v^*)=(v^*(v))w$, respectively.
Then $c_V(\alpha):=\{\Phi_V(\alpha\otimes w^*)\mid w^*\in W^*\}$ 
and $c_W(\alpha):=\{\Phi_W(\alpha\otimes v^*)\mid v^*\in V^*\}$
are
subspaces of $V$ and $W$, respectively.
If $\alpha=\sum_{i=1}^n v_i\otimes w_i$ with $v_i\in V$ and $w_i\in W$, 
then $c_V(\alpha)$ is a subspace of the $k$-span 
$\langle v_1,\ldots,v_n\rangle$
of $v_1,\ldots,v_n$.
If, moreover, $w_1,\ldots,w_n$ is linearly independent, 
$c_V(\alpha)$ agrees with $\langle v_1,\ldots,v_n\rangle$.
If $\alpha=\sum_{i=1}^m\sum_{j=1}^nc_{ij}v_i\otimes w_j$
with $v_1,\ldots,v_m$ and $w_1,\ldots,w_n$ linearly independent and
$c_{ij}\in k$, then $\dim c_V(\alpha)=\dim c_W(\alpha)=\rank (c_{ij})$.
Note that $\alpha=v\otimes w\neq 0$ for some $v\in V$ and $w\in W$
if and only if $\dim c_V(\alpha)=\dim c_W(\alpha)=1$, and if this is the
case, $v$ and $w$ are bases of the one-dimensional spaces $c_V(\alpha)$ 
and $c_W(\alpha)$, respectively.

From this observation, we have the following two lemmas easily.

\begin{lemma}\label{tensor.thm}
Let $k$ be a field, and $V$ and $W$ be $k$-vector spaces.
If $v,v'\in V$, $w,w'\in W$, and $v\otimes w=v'\otimes w'\neq 0$, then there 
exists some unique $c\in k^\times $ such that $v'=cv$ and
$w'=c^{-1}w$.
\qed
\end{lemma}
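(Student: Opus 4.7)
The plan is to invoke directly the observations about $c_V(\alpha)$ and $c_W(\alpha)$ recorded in the paragraph immediately preceding the statement. Since $v\otimes w = v'\otimes w' \neq 0$, both $v$ and $w$ (and similarly $v'$ and $w'$) must be nonzero, for otherwise the tensor would vanish. Applying the criterion stated in the text, we have $\dim c_V(v\otimes w) = \dim c_W(v\otimes w) = 1$, and in that one-dimensional situation $v$ is a basis of $c_V(v\otimes w)$ while $w$ is a basis of $c_W(v\otimes w)$. The same applies to the pair $(v',w')$.

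Next I would observe that $c_V$ and $c_W$ depend only on the element $\alpha = v\otimes w = v'\otimes w'$, not on the chosen factorisation. Hence $\langle v\rangle_k = c_V(\alpha) = \langle v'\rangle_k$ and $\langle w\rangle_k = c_W(\alpha) = \langle w'\rangle_k$, so there exist unique nonzero scalars $c,d \in k^\times$ with $v' = cv$ and $w' = dw$.

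The final step is the reciprocity. Substituting back gives
\[
v\otimes w \;=\; v'\otimes w' \;=\; (cv)\otimes(dw) \;=\; cd\,(v\otimes w),
\]
and since $v\otimes w \neq 0$ we conclude $cd = 1$, i.e.\ $d = c^{-1}$. Uniqueness of $c$ is clear from $v' = cv$ together with $v\neq 0$. No obstacle is expected: the proof is essentially a direct application of the preparatory remarks, and the only thing to be careful about is to note at the outset that $v\otimes w\neq 0$ forces both factors to be nonzero so that the $c_V,c_W$ calculation applies.
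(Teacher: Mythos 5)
Your proposal is correct and is exactly the argument the paper intends: the lemma is stated with \qed because it is meant to follow "easily" from the preceding observations on $c_V(\alpha)$ and $c_W(\alpha)$, which is precisely what you carry out (both $v$ and $v'$ are bases of the one-dimensional space $c_V(\alpha)$, similarly for $w,w'$, and the reciprocity $d=c^{-1}$ follows from $cd=1$). Nothing to add.
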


\begin{lemma}\label{tensor3.thm}
Let $k$ be a field, and $V$ and $W$ be $k$-vector spaces.
Let $k'$ be an extension field of $k$, and $V'=k'\otimes_k V$ and
$W'=k'\otimes_k W$.
Let $\alpha$ be an element of $V\otimes_k W$.
If $1\otimes \alpha\in k'\otimes_k(V\otimes_k W)\cong V'\otimes_{k'}W'$
is of the form $\mu'\otimes \nu'$ for some $\mu'\in V'$ and $\nu'\in W'$,
then there exist some $\mu\in V$ and $\nu\in W$ such that 
$\alpha=\mu\otimes\nu$.
\qed
\end{lemma}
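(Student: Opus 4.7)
The plan is to reduce the statement to the rank criterion recalled in the paragraph preceding the lemma, namely that a nonzero element $\alpha\in V\otimes_k W$ is a pure tensor if and only if $\dim_k c_V(\alpha)=\dim_k c_W(\alpha)=1$, in which case any bases of $c_V(\alpha)$ and $c_W(\alpha)$ serve as tensor factors. The key intermediate claim is the base-change compatibility
\[
c_{V'}(1\otimes\alpha)=k'\otimes_k c_V(\alpha),\qquad c_{W'}(1\otimes\alpha)=k'\otimes_k c_W(\alpha).
\]

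To prove this, first note the inclusion $\supseteq$: any $w^\ast\in W^\ast$ extends uniquely to a $k'$-linear functional $W'\to k'$, and contracting $1\otimes\alpha$ against this extension gives $1\otimes\Phi_V(\alpha\otimes w^\ast)$. For the inclusion $\subseteq$, I would write $\alpha=\sum_{i=1}^n v_i\otimes w_i$ with $w_1,\dots,w_n$ linearly independent over $k$; then the remark before the lemma gives $c_V(\alpha)=\langle v_1,\dots,v_n\rangle_k$, while the elements $1\otimes w_1,\dots,1\otimes w_n$ remain $k'$-linearly independent in $W'$ (a standard consequence of the flatness of $k'$ over $k$), so applying the same remark over $k'$ yields $c_{V'}(1\otimes\alpha)=\langle 1\otimes v_1,\dots,1\otimes v_n\rangle_{k'}=k'\otimes_k c_V(\alpha)$.

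Once this is in place, the lemma follows quickly. If $\mu'\otimes\nu'=0$, faithful flatness of $k'/k$ gives $\alpha=0$, so we may take $\mu=\nu=0$. Otherwise $1\otimes\alpha$ is a nonzero pure tensor in $V'\otimes_{k'}W'$, so the rank criterion gives $\dim_{k'}c_{V'}(1\otimes\alpha)=\dim_{k'}c_{W'}(1\otimes\alpha)=1$; by the compatibility above this forces $\dim_k c_V(\alpha)=\dim_k c_W(\alpha)=1$, and the converse direction of the rank criterion then produces the desired $\mu\in V$ and $\nu\in W$ with $\alpha=\mu\otimes\nu$.

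The only step that needs a little care is the base-change identity for $c_V$; everything else is an immediate application of the already-established characterization of pure tensors in terms of the dimensions of $c_V(\alpha)$ and $c_W(\alpha)$.
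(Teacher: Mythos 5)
Your proof is correct and follows essentially the same route the paper intends: the lemma is stated with \qed precisely because it is meant to follow from the preceding observation that a nonzero $\alpha$ is a pure tensor iff $\dim c_V(\alpha)=\dim c_W(\alpha)=1$, together with the fact that these dimensions (the rank of the coefficient matrix) are unchanged under base field extension. Your base-change identity $c_{V'}(1\otimes\alpha)=k'\otimes_k c_V(\alpha)$ is exactly the detail being left implicit, and your verification of it is sound.
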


\begin{lemma}\label{unit-constant.thm}
Let $k$ be a field, and $X$ be a reduced $k$-scheme.
Assume that there is a $k$-scheme $Z$ of finite type and a 
dominating $k$-morphism $Z\rightarrow X$.
Then there is a short exact sequence of the form
\[
1\rightarrow K^\times \xrightarrow{\iota}\Gamma(X,\O_X)^\times
\rightarrow \Bbb Z^r\rightarrow 0,
\]
where $K$ is the integral closure of $k$ in $\Gamma(X,\O_X)$, and
$\iota$ is the inclusion.
\end{lemma}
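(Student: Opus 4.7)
The plan is to deduce the statement from the classical Rosenlicht unit theorem applied to $Z$, transferred along the ring map induced by $Z\to X$. Two harmless reductions on $Z$ come first: replacing $Z$ by the disjoint union of the members of a finite affine open cover (the resulting morphism is still dominating and of finite type), we may assume $Z=\Spec B$ is affine, and then replacing $Z$ by $Z\red$ we may further assume $B$ is a finitely generated reduced $k$-algebra.

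Next, I would establish that the induced homomorphism $A:=\Gamma(X,\O_X)\to B$ is injective. Working affine locally, pick $U=\Spec A_U\subseteq X$ and call the dominating map $\pi:Z\to X$. For every point $z$ in the preimage of $U$ in $Z$, the composed map $A_U\to B\to k(z)$ factors as $A_U\to k(\pi(z))\hookrightarrow k(z)$, so a section $f|_U$ that dies in $B$ lies in the prime $\fp_{\pi(z)}$ for every such $z$. The set of such primes $\pi(z)$ is $\pi(Z)\cap U$, which is dense in $U$ since $\pi(Z)$ is dense in $X$. A closed set containing a dense set is everything, so $V(f|_U)=\Spec A_U$ and $f|_U$ is nilpotent; reducedness of $X$ then forces $f|_U=0$, hence $f=0$.

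I would then invoke the unit theorem: for a finitely generated reduced $k$-algebra $B$, $B^\times/K_B^\times$ is a finitely generated free abelian group, where $K_B$ is the integral closure of $k$ in $B$. For $B$ a domain this is Rosenlicht's classical theorem (with $K_B$ identified as the algebraic closure of $k$ in $Q(B)$, a finite extension of $k$); the reduced case follows by decomposing $\Spec B$ into integral components and using $K_B=\prod K_{B_i}$. Finally I identify the pullback kernel, showing $A^\times\cap K_B^\times=K^\times$. The inclusion $\supseteq$ is immediate; for $\subseteq$, a unit $a$ of $A$ that is integral over $k$ with relation $a^n+c_{n-1}a^{n-1}+\cdots+c_0=0$ has inverse $-c_0^{-1}(a^{n-1}+c_{n-1}a^{n-2}+\cdots+c_1)\in K$ once the relation is shortened (using the unit $a$) so that $c_0\neq 0$. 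This yields an injection $A^\times/K^\times\hookrightarrow B^\times/K_B^\times\cong\Bbb Z^s$, so $A^\times/K^\times$ is free abelian of some finite rank $r\leq s$, giving the claimed exact sequence.

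The principal input is the classical unit theorem; the only mildly delicate point is the identification $K^\times=A^\times\cap K_B^\times$, which needs the observation that a unit integral over $k$ has integral inverse. Everything else is bookkeeping about affine covers and reducedness.
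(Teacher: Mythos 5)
Your proof is correct, and it supplies a complete argument where the paper itself gives none: the paper disposes of this lemma with the single line ``proved similarly to [Has2, (4.12)],'' deferring to an external reference. Your route is the natural one and almost certainly the intended one: reduce $Z$ to a reduced affine scheme of finite type, prove $\Gamma(X,\O_X)\hookrightarrow\Gamma(Z,\O_Z)=B$ from density of the image plus reducedness of $X$, invoke Rosenlicht's unit theorem for $B$, and identify the kernel of $\Gamma(X,\O_X)^\times\to B^\times/K_B^\times$ as $K^\times$ using the observation that a unit which is integral over $k$ has integral inverse. Two small imprecisions, neither of which damages the argument: (i) for the irreducible components $B_i=B/\fp_i$ of a reduced finitely generated $B$ one does \emph{not} have $K_B=\prod_i K_{B_i}$ --- e.g.\ $B=k[x,y]/(xy)$ has $K_B=k$ while $\prod_i K_{B_i}=k\times k$, since $B$ is not the product of the $B_i$. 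What is true, and what your own kernel-identification technique proves verbatim (a common monic polynomial for the components is obtained by multiplying the individual ones, and injectivity of $B\hookrightarrow\prod_i B_i$ pulls it back), is that $B^\times/K_B^\times$ injects into $\prod_i B_i^\times/K_{B_i}^\times$, which is all you need. (ii) For a domain $B$, the integral closure $K_B$ of $k$ in $B$ may be a proper subfield of the algebraic closure of $k$ in $Q(B)$ (e.g.\ $B=\mathbb{Q}[u,v]/(u^2-2v^2)$, where $\sqrt2=u/v$ lies in $Q(B)$ but not in $B$); this is harmless because Rosenlicht's theorem holds with $K_B$ exactly as you use it, and the unit groups in question are unchanged. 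With these two parentheticals repaired, the proof stands.
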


\begin{proof}
This is proved similarly to \cite[(4.12)]{Hashimoto}.
\end{proof}

\begin{lemma}\label{tensor2.thm}
Let $k$ be a field, and $X$ and $Y$ be quasi-compact quasi-separated
$k$-schemes.
Then the canonical map $k[X]\otimes_k k[Y]\rightarrow k[X\times Y]$ is
an isomorphism, where $k[X]=\Gamma(X,\O_X)$ and so on.
\end{lemma}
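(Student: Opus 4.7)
The plan is to handle the affine case first via a direct Čech/sheaf-axiom computation, and then bootstrap to the general case using pushforward along the first projection $p_X : X \times Y \to X$.

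I will first treat the case $X = \Spec A$ affine with $Y$ quasi-compact and quasi-separated. Choose a finite affine open cover $Y = V_1 \cup \cdots \cup V_n$ with $V_p = \Spec B_p$, and, using quasi-separatedness of $Y$, a finite affine open cover $\{V_{pq}^r = \Spec B_{pq}^r\}_r$ of each intersection $V_p \cap V_q$. The sheaf axiom for $\O_Y$ then gives an exact sequence of $k$-vector spaces
\[
0 \to k[Y] \to \prod_p B_p \to \prod_{p,q,r} B_{pq}^r.
\]
Because $A$ is free (hence flat) over the field $k$ and tensor commutes with finite products, tensoring by $A$ over $k$ preserves exactness, and the resulting terms are canonically $A \otimes_k k[Y]$, $\prod_p \Gamma(X \times V_p, \O)$, and $\prod_{p,q,r} \Gamma(X \times V_{pq}^r, \O)$, using that products of affine $k$-schemes are affine with tensor-product coordinate ring. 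Comparing with the sheaf-axiom sequence for $\O_{X \times Y}$ attached to the finite affine cover $\{X \times V_p\}$, whose intersections are refined by the affines $\{X \times V_{pq}^r\}$, I conclude $\Gamma(X \times Y, \O) = A \otimes_k k[Y]$.

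For the general case I consider the projection $p_X : X \times Y \to X$. Since $Y$ is quasi-compact and quasi-separated, so is $p_X$ (as a base change of $Y \to \Spec k$), and hence $(p_X)_* \O_{X \times Y}$ is quasi-coherent on $X$. By the affine case its value on any affine open $\Spec A \subset X$ is $A \otimes_k k[Y]$; thus $(p_X)_* \O_{X \times Y}$ is naturally isomorphic, as a quasi-coherent sheaf, to $\O_X \otimes_k k[Y]$, the extension of scalars of the $k$-vector space $k[Y]$ along $k \to \O_X$. To pass to global sections, I will write $k[Y]$ as the filtered colimit of its finite-dimensional $k$-subspaces $M_\alpha$. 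Since $X$ is quasi-compact and quasi-separated, $\Gamma(X,-)$ commutes with filtered colimits on quasi-coherent sheaves, and for each finite-dimensional $M_\alpha$ one has trivially $\Gamma(X, \O_X \otimes_k M_\alpha) = k[X] \otimes_k M_\alpha$. Taking the colimit yields $\Gamma(X \times Y, \O_{X \times Y}) = k[X] \otimes_k k[Y]$, and unwinding identifies this with the canonical map in the statement.

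The only technical point is the bookkeeping of sheaf-axiom exact sequences in the affine case, which is routine because tensoring with a $k$-vector space is exact (as $k$ is a field) and preserves the finite products appearing. The facts about quasi-compact quasi-separated morphisms and schemes used above (quasi-coherence of pushforward, commutation of global sections with filtered colimits) are standard, so I foresee no real obstacle.
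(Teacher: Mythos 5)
Your proof is correct, and the two arguments coincide exactly on the case of affine $X$: a finite affine cover of $Y$, a finite affine refinement of the pairwise intersections (this is where quasi-separatedness enters), the left-exact sheaf-axiom sequences, and the fact that $A\otimes_k-$ is exact and commutes with the finite products involved. Where you genuinely diverge is the passage to general $X$. The paper simply runs the same \v{C}ech comparison a second time in the $X$-variable: cover $X$ by finitely many affines, use the already-established affine-$X$ case to identify the middle and right terms of the two sequences, and conclude by the five lemma. You instead push forward along $p_X\colon X\times Y\rightarrow X$, use the affine case to identify $(p_X)_*\O_{X\times Y}$ with the quasi-coherent sheaf $\O_X\otimes_k k[Y]$, and then compute global sections by writing $k[Y]$ as a filtered colimit of finite-dimensional subspaces and invoking the commutation of $\Gamma(X,-)$ with filtered colimits of quasi-coherent sheaves on a quasi-compact quasi-separated scheme. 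Both routes work. The paper's is more elementary and self-contained (nothing beyond the sheaf axiom and flatness over a field, applied symmetrically in the two factors); yours imports two standard but heavier facts (quasi-coherence of the pushforward along a qcqs morphism, and commutation of $\Gamma$ with filtered colimits), in exchange for the somewhat more conceptual intermediate statement that $(p_X)_*\O_{X\times Y}$ is the ``constant'' quasi-coherent sheaf $\O_X\otimes_k k[Y]$.
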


\begin{proof}
First, the case that both $X$ and $Y$ are affine is trivial.

Second, assume that $X$ is affine.
There is a finite affine open covering $Y=\bigcup_{i=1}^r Y_i$ of $Y$.
As each $Y_i\cap Y_j$ is again quasi-compact by the 
assumption of the quasi-separated property,
there is a finite affine open covering $Y_i\cap Y_j=\bigcup_k Y_{ijk}$.
Then there is a commutative diagram
\[
\xymatrix{
0 \ar[r] & k[X]\otimes_k k[Y] \ar[d] \ar[r] &
k[X]\otimes_k \prod_i k[Y_i] \ar[d] \ar[r] &
k[X]\otimes_k \prod_{i,j,k} k[Y_{ijk}] \ar[d] \\
0 \ar[r] & k[X\times Y] \ar[r] &
\prod_i k[X \times Y_i] \ar[r] &
\prod_{i,j,k} k[X\times Y_{ijk}]
}.
\]
By the first step and the five lemma, the left most vertical arrow
is an isomorphism.

Lastly, consider the general case.
Arguing as in the second step, and using the result of the second step,
we are done.
\end{proof}

In the rest of this section, we prove the following

\begin{theorem}\label{main.thm}
Let $k$ be a field, 
$G$ a smooth $k$-group scheme of finite type, and 
$X$ a reduced $G$-scheme which is quasi-compact and quasi-separated.
Assume that there is a $k$-scheme $Z$ of finite type and a 
dominating $k$-morphism $Z\rightarrow X$.
Then $H^1\alg(G,\O^\times)=\Ker(\rho:\Pic(G,X)\rightarrow\Pic(X))$ is
a finitely generated abelian group.
\end{theorem}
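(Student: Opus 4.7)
By Proposition~\ref{five-term.thm}, $\ker\rho$ is identified with $H^1\alg(G,\O^\times)$, the first cohomology of the explicit complex from paragraph~(\ref{group-cohomology.par}): its $1$-cocycles are units $f\in\Gamma(G\times X,\O^\times)$ satisfying $f(g_1 g_0,x)=f(g_1,g_0 x)f(g_0,x)$ (so in particular $f(e,x)=1$ upon setting $g_0=e$), and its $1$-coboundaries are units of the form $(g,x)\mapsto u(gx)/u(x)$ for $u\in\Gamma(X,\O^\times)$. My plan is to split this cohomology into a ``discrete'' free abelian part and an ``algebraic constant'' part using Lemma~\ref{unit-constant.thm}, and to handle each part separately.

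Setting $X_0:=X$, $X_1:=G\times X$, $X_2:=G\times G\times X$, I claim each $X_i$ satisfies the hypotheses of Lemma~\ref{unit-constant.thm}. Indeed, each is quasi-compact and quasi-separated (fiber products preserve these properties), reduced (as $G$ is smooth hence geometrically reduced over $k$, so the products with the reduced $X$ remain reduced), and dominated by the finite-type $k$-scheme $G^{\times i}\times Z$ (density of the image of $Z\to X$ is preserved under the flat base change along $G^{\times i}\to\Spec k$). Lemma~\ref{unit-constant.thm} therefore supplies short exact sequences
\begin{equation*}
1\to K_i^\times\to\Gamma(X_i,\O^\times)\to L_i\to 0,
\end{equation*}
with $K_i$ integral over $k$ and $L_i\cong\Bbb Z^{r_i}$. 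Each cosimplicial differential is a $k$-algebra homomorphism, hence preserves integrality over $k$, so it restricts to the $K^\times$-subgroups and descends to the $L$-quotients. The associated short exact sequence of three-term complexes yields a long exact cohomology sequence in which the $L$-part contributes a subquotient of $L_1\cong\Bbb Z^{r_1}$, automatically finitely generated. It thus suffices to prove that $H^1$ of the $K^\times$-subcomplex is finitely generated.

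For the $K^\times$-part, I exploit Lemma~\ref{tensor2.thm} to identify $k[G\times X]$ with $k[G]\otimes_k k[X]$. Any $K^\times$-cocycle $f$ lies in a finitely generated sub-$k$-algebra and can be expanded as a finite sum $f=\sum_j a_j\otimes b_j$. Applying Lemmas~\ref{tensor.thm} and~\ref{tensor3.thm} to the multiplicative cocycle identity $f(g_1g_0,x)=f(g_1,g_0 x)f(g_0,x)$ (after base change to $\bar k$ if necessary) should force the rank-one constraint that, modulo a $K_X^\times$-coboundary, $f$ is the pull-back of a character $\chi\colon G\to\Bbb G_m$. This embeds $H^1$ of the $K^\times$-subcomplex into a subquotient of the character group $\Cal X(G)$, which is finitely generated for any smooth $k$-group scheme of finite type: the connected component $G^\circ$ has a finitely generated character group by the structure theory of linear algebraic groups, and the component group of $G$ is finite.

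The main obstacle is the character-extraction step: since $K_1$ is not a priori a tensor product of $K_{G}$ and $K_{X}$, one cannot naively separate the $G$- and $X$-variables inside a cocycle $f\in K_1^\times$. Overcoming this requires a careful application of the rank-one decomposition Lemmas~\ref{tensor.thm} and~\ref{tensor3.thm} to the multiplicative cocycle relation, followed by a Galois descent step to bring the resulting character back from $\bar k$ down to $k$.
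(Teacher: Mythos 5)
Your opening moves are sound and in fact parallel the paper's own reduction: identifying $\Ker\rho$ with $H^1\alg(G,\O^\times)$, passing to the complex of global unit groups via Lemma~\ref{tensor2.thm}, and splitting each $\Gamma(G^i\times X,\O^\times)$ by Lemma~\ref{unit-constant.thm} into the subgroup $K_i^\times$ of units integral over $k$ and a free quotient $\Bbb Z^{r_i}$; the differentials preserve this filtration and the free part contributes a finitely generated piece, exactly as you say. But everything up to here is the easy part, and the step you defer --- finite generation of $H^1$ of the $K^\times$-subcomplex --- is both unproved (``should force'' is doing all the work) and aimed at a false target. Your claim is that a cocycle in $K_1^\times$ is, modulo coboundary, the pullback of a character, so that this $H^1$ is a subquotient of $\Cal X(G)$. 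This fails already for $k=\Bbb Q$ and $G=\Bbb Z/3$ acting trivially on $X=\Spec\Bbb Q(\zeta_3)$: every unit of $k[G\times X]$ is integral over $k$, all coboundaries are trivial, and the cocycles are the homomorphisms $\Bbb Z/3\rightarrow\Bbb Q(\zeta_3)^\times$, so $H^1$ of the $K^\times$-subcomplex is $\mu_3(\Bbb Q(\zeta_3))\cong\Bbb Z/3$, while $\Cal X(G)=\mu_3(\Bbb Q)$ is trivial. The group is still finite, so the theorem holds, but not by embedding into $\Cal X(G)$. Moreover, Lemmas~\ref{tensor.thm} and~\ref{tensor3.thm} are rank-one tensor statements whose role in the paper is to prove the Rosenlicht-type units theorem for \emph{geometrically integral} factors; they give no separation of variables on the finite \'etale algebra $K_1$, which is a product of field extensions on which an arbitrary unit is an arbitrary tuple.

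What the $K^\times$-part actually requires is: (i) the identification of the integral closure of $k$ in $k[G]^{\otimes n}\otimes_k k[X]$ with $H_0^{\otimes n}\otimes_k B_0$, where $H_0,B_0$ are the integral closures of $k$ in $k[G]$ and $k[X]$ --- this uses the smoothness of $G$ (geometric integrality of the identity component) and is not formal --- which exhibits the $K^\times$-subcomplex as computing $H^1\alg(G_0,\O_{X_0}^\times)$ for a finite \'etale group scheme $G_0=\Spec H_0$ acting on the finite reduced scheme $X_0=\Spec B_0$; (ii) a Galois descent to replace $G_0$ by a constant finite group; and (iii) a case analysis on the kernel $N$ of the action on $B_0$: when $N$ is trivial, $X_0\rightarrow\Spec B_0^{G_0}$ is a principal $G_0$-bundle and the group vanishes (Hilbert~90 in disguise); when $N=G_0$ one gets the finite character group of a finite group over the larger coefficient ring (note: not over $k$, which is exactly where your $\Cal X(G)$ claim breaks); and the general case is assembled from these by the Lyndon--Hochschild--Serre spectral sequence. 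None of this is present in your proposal --- you flag the obstacle yourself without resolving it --- so the argument is incomplete precisely at its central point.
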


The proof is divided into several steps.

\begin{proof}
Step~1.\ 
The case that $G$ is a finite group, and
$X=\Spec B$ is also finite over $k$.

As $\Pic X$ is trivial, we have that $H^1\alg(G,\O^\times)\cong H^1(G,B^\times)
\cong\Pic(G,X)$.
Let $N$ be the kernel of $G\rightarrow \GL(B)$.

Step 1--1.
The case that $N$ is trivial.
Then we claim that 
the canonical map $\varphi:X=\Spec B\rightarrow 
Y=\Spec B^G$ is a principal $G$-bundle.
In order to check this, we may assume that $B^G$ is a field.
Then $G$ acts on the set of primitive idempotents of $B$ transitively.
So if $B=B_1\times\cdots\times B_r$ with each $B_i$ being a field,
then $r=[G:H]$, where $H$ is the stabilizer of the unit element $e_1$
of $B_1$.
It is also easy to check that $B^G=(B_1)^H$.
So $\dim_{B^G}B=r\dim_{B^G}B_1=r\#H=\#G$.

For $b\in B$, if $H$ is the stabilizer of $b$, then
$b$ is a root of a separable polynomial $\phi(t)=\prod_{\sigma\in G/H}(t-\sigma b)
$.
This shows that $\varphi$ is \'etale finite.
As $G$ is finite, it is also a geometric quotient.
So $\Phi:G\times X\rightarrow X\times_Y X$ given by $\Phi(g,x)=(gx,x)$ is
finite surjective.
As $X\times_Y X$ is reduced, $B\otimes_{B^G} B\rightarrow k[G]\otimes_k B$
is injective.
By dimension counting as vector spaces over $B^G$, we have that $\Phi$ is
an isomorphism as claimed.

By the claim and by Lemma~\ref{pfb-pic.thm}, $\Pic(G,B)\cong\Pic(B^G)=0$,
as desired.

Step 1--2.
The case that $N=G$.
That is, the case that $G$ acts on $B$ trivially.
If $B\cong B_1\times\cdots\times B_r$, then $\Pic(G,B)\cong\prod_i\Pic
(G,B_i)$.
So we may assume that $B$ is a field.
As $\Pic(G,B)\cong\Pic(B\otimes_kG,B)$, we may assume that $B=k$.
Then $\Pic(G,k)$ is nothing but the group $\Cal 
X(G)$ of the isomorphism classes of
one-dimensional representations of $G$.
As $G$ is finite, $\Cal X(G)$ is finite, as desired.

Step 1--3.
The case that $N$ is arbitrary.
By the exact sequence
\[
0\rightarrow E^{1,0}_2 \rightarrow E^1 \rightarrow E^{0,1}_2
\]
of the Lyndon--Hochschild--Serre spectral sequence
\[
E^{p,q}_2=H^p(G/N,H^q(N,B^\times))\Rightarrow H^{p+q}(G,B^\times),
\]
there is an exact sequence
\[
0\rightarrow H^1(G/N,B^\times)\rightarrow H^1(G,B^\times)\rightarrow
H^1(N,B^\times).
\]
Now the assertion follows from Step 1--1 and 1--2, immediately.

Step~2.
The case that $G$ is a finite group scheme, and
$X=\Spec B$ is also finite over $k$.
Then there is a finite Galois extension $k'$ of $k$ such that 
$\Omega:=k'\otimes_k G$ is a finite group.
That is to say, $\dim_k k[G]$ equals the number of $k'$-rational points
of $G$.
Thus $\Omega$ is identified with $\Hom_{k'\alg}(k'\otimes_k H,k')$.
Set $\Gamma:=\Gal(k'/k)$ to be the Galois group.
Note that $\Gamma$ acts on $k'\otimes_k H$ by $\gamma(\alpha\otimes h)
=(\gamma \alpha)\otimes h$.
$\Gamma$ acts on the group
$\Omega$ by $(\gamma \omega)(\alpha\otimes h)
=\alpha(\gamma(\omega(1\otimes h)))$.
In other words, $\gamma \omega=\gamma\circ \omega \circ \gamma^{-1}$.

Let $M$ be a $(G,B)$-module.
Plainly, $k'\otimes_k M$ is a $(k'\otimes_k G,k'\otimes_k B)$-module.
In other words, $(\Omega,k'\otimes_k B)$-module.
$\Omega$ acts on $k'\otimes_k B$ as $k'$-algebra automorphisms by
$\omega(\alpha\otimes b)=\sum_{(b)}\omega(\alpha\otimes b_{(1)})\otimes b_{(0)}$,
where we employ Sweedler's notation.
$\Gamma$ also acts on $k'\otimes_k M$ by $\gamma(\alpha\otimes m)
=(\gamma\alpha)\otimes m$.
$\Omega$ acts on $k'\otimes_k M$ by
$\omega(\alpha\otimes m)=\sum_{(m)}\omega(\alpha\otimes m_{(1)})\otimes m_{(0)}$.

It is easy to see that
\begin{multline*}
(\gamma\omega)(\alpha\otimes m)=
\sum_{(m)}(\gamma\omega)(\alpha\otimes m_{(1)})\otimes m_{(0)}\\
=\gamma(\sum_{(m)}\omega(\gamma^{-1}\alpha\otimes m_{(1)})\otimes m_{(0)})
=(\gamma\circ\omega\circ\gamma^{-1})(\alpha\otimes m).
\end{multline*}
Thus the actions of $\Gamma$ and $\Omega$ on $k'\otimes_k M$ together induce
a $k$-linear 
action of the semidirect product $\Theta:=
\Gamma \ltimes\Omega$ on $k'\otimes_k M$.
Similarly, $\Theta$ acts on $k'\otimes_k B$ by $k$-algebra
automorphisms.
We also think that $\Omega$ acts trivially on $k'$, and thus $\Theta$
acts on $k'$, $k$-linearly.
Now $k'\otimes_k M$ is a $(\Theta,k'\otimes_k B)$-module in the sense that
the action $k'\otimes_k B\otimes_k k'\otimes_k M\rightarrow k'\otimes_k M$ 
of $k'\otimes_k B$ on $k'\otimes_k M$ is $\Theta$-linear.
Thus $M\mapsto k'\otimes_k M$ is a functor from the category $\Mod(G,B)$ of
$(G,B)$-modules to the category $\Mod(\Theta,k'\otimes_k B)$ of
$(\Theta,k'\otimes_k B)$-modules (note that the base field is $k$, and 
not $k'$).

Now let $N$ be a $(\Theta,k'\otimes_k B)$-module.
Then $N^\Gamma$ is a $B$-module, since $(k'\otimes_k B)^\Gamma=B$.
As $N$ is also an $\Omega$-module, it is an $H$-comodule.
Note that the coaction
\[
\omega_N:N\rightarrow N\otimes_k H
\]
is $\Gamma$-linear, where $\Gamma$ acts on $N\otimes_k H$ by 
$\gamma(n\otimes h)=\gamma n\otimes h$.
Indeed, $\Omega$ acts on $N$ by $\omega n=\sum_{(n)}\omega(n_{(1)})n_{(0)}$
(here we identify
$\Omega=\Hom_{k'\alg}(k'\otimes_k H,k')\cong \Hom_{k\alg}(H,k')$).
As $\gamma((\gamma^{-1}\omega)(n))=\omega(\gamma n)$, 
\[
\sum_{(n)}(\omega(n_{(1)}))(\gamma (n_{(0)}))=
\sum_{(\gamma n)}\omega((\gamma n)_{(1)})(\gamma n)_{(0)}.
\]
As $\omega$ is arbitrary and $\Omega$ is a $k'$-basis of $\Hom_k(H,k')$,
it follows that
\[
\sum_{(n)}\gamma(n_{(0)})\otimes n_{(1)}
=\sum_{(\gamma n)}(\gamma n)_{(0)}\otimes (\gamma n)_{(1)}.
\]
That is, $\omega_N$ is $\Gamma$-linear.
So $N^\Gamma$ is an $H$-subcomodule of $N$.

As $B\otimes_k N \rightarrow N$ is $H$-linear,
$B\otimes_k N^\Gamma\rightarrow N^\Gamma$ is also $H$-linear, as can be
checked easily.
Thus $N^\Gamma$ is a $(G,B)$-module.

These functors $M\mapsto k'\otimes_k M$ and $N\mapsto N^\Gamma$ give
an equivalence.
Indeed, $k'\otimes_k X\rightarrow X$ is a principal $\Gamma$-bundle.
So the map $M\rightarrow(k'\otimes_k M)^\Gamma$ and $k'\otimes_k N^\Gamma
\rightarrow N$ are isomorphisms of $(\Gamma,k'\otimes_k B)$-modules and
$B$-modules, respectively.
We show that the map
$M\rightarrow(k'\otimes_k M)^\Gamma$ is also $G$-linear.
As $G$ acts on both $k$ and $k'$ trivially, the inclusion $k\hookrightarrow
k'$ is $G$-linear.
It follows that $M\rightarrow k'\otimes_k M$ is $G$-linear.
As $(k'\otimes_k M)^\Gamma$ is a $G$-submodule of $k'\otimes_k M$, 
the map $M\rightarrow(k'\otimes_k M)^\Gamma$ is $G$-linear.
Next, we show that $k'\otimes_k N^\Gamma\rightarrow N$ is $\Omega$-linear.
This is equivalent to say that it is $G$-linear.
As the map is the composite $k'\otimes_k N^\Gamma\hookrightarrow
k'\otimes_k N \rightarrow N$, this is trivial.

Thus we have an equivalence of categories 
$\Mod(G,X)\cong \Mod(\Theta,k'\otimes_k X)$, mapping $\Cal M$ to 
$p_2^*\Cal M$, where $p_2:k'\otimes_k X\rightarrow X$ is the canonical 
projection.
It is easy to see that $\Cal M$ is an invertible sheaf if and only if
$p_2^*\Cal M$ is.
Thus the equivalence induces an isomorphism $\Pic(G,X)\cong
\Pic(\Theta,k'\otimes_k X)$.
Thus changing $G$ to $\Theta$, $X$ to $k'\otimes_k X$, and without changing
the base field $k$, we may and shall assume that $G$ is a 
finite group.
But this case is done in Step~1.

Step~3.
The case that both $G=\Spec H$ and $X=\Spec B$ are affine.
Let $H_0$ and $B_0$ be the integral closures of $k$ in $H$ and $B$,
respectively.
Then, $H_0\otimes_k H_0\otimes_k\cdots\otimes_k H_0$ is the integral
closure of $k$ in
$H\otimes_k H\otimes_k\cdots\otimes_k H$.
To verify this, we may assume that $k$ is separably closed by
\cite[(6.14.4)]{EGA-IV-2}.
By \cite[(13.3)]{Borel}, connected components of $G$ are isomorphic 
each other.
So letting $G^\circ=\Spec H_1$ be the identity component of $G$, 
it suffices to show that $k$ is integrally closed in $H_1^{\otimes n}$.
But this is the consequence of the geometric integrality of $H_1$ 
\cite[(1.2)]{Borel}.
Similarly, the integral closure of $k$ in $H^{\otimes n}\otimes_k B$ is
$H_0^{\otimes n}\otimes_k B_0$.
To verify this,
we may assume that both $H_0$ and $B_0$ are fields.
Then $Q(H^{\otimes n})\otimes_k B_0$ is integrally closed in
$Q(H^{\otimes n})\otimes_k B$ by \cite[(6.14.4)]{EGA-IV-2}.
On the other hand, as $k\subset Q(H^{\otimes n})$ is a regular extension,
$B_0\subset Q(H^{\otimes n}\otimes_k B_0)$ is integrally closed.

As the image of the coproduct $\Delta(H_0)$ is contained in $H_0\otimes_k H_0$,
it is easy to see that $H_0$ is a subHopf algebra of $H$.
As $\omega_B(B_0)\subset B_0\otimes_k H_0$, $B_0$ is an $H_0$-comodule algebra
which is also an $H$-subcomodule algebra of $B$.
So when we set $G_0=\Spec H_0$ and $X_0=X$, then $G_0$ is a quotient group
scheme of $G$ (it is \'etale over $k$), $G_0$ acts on $X_0$, and the
diagram
\[
\xymatrix{
G\times X \ar[d] \ar[r]^a & X \ar[d] \\
G_0\times X_0 \ar[r]^a & X_0
}
\]
is commutative.

Let $\Mod(\Bbb Z)$ be the category of abelian groups, and
$\Cal F$ be its Serre subcategory consisting of finitely generated
abelian groups.
Set $\Cal A$ to be the quotient $\Mod(\Bbb Z)/\Cal F$.
Then by Lemma~\ref{unit-constant.thm}, 
$\Hom_{\Ps(\Zar(B_{G_0}(X)))}(\Bbb L,\Cal O_{X_0}^\times)$
and 
$\Hom_{\Ps(\Zar(B_{G}(X)))}(\Bbb L,\Cal O_{X}^\times)$
are isomorphic as complexes in $\Cal A$.
So the first cohomology of one is zero in $\Cal A$ if and only if
the first cohomology of the other is zero in $\Cal A$.
Thus replacing $G$ by $G_0$ and $X$ by $X_0$, we may assume that both
$G$ and $X$ are finite.
But this case is done in Step~2.

Step~4.
The general case.
The product $G\times G\rightarrow G$ induces 
$k[G]\rightarrow k[G\times G]\cong k[G]\otimes_k k[G]$ by 
Lemma~\ref{tensor2.thm}.
From this, it is easy to get the commutative Hopf algebra structure of $k[G]$.
Set $G_1=\Spec k[G]$.
Then the canonical map $G\rightarrow G_1$ is a homomorphism of group schemes.
Similarly, the action $G\times X\rightarrow X$ induces
$k[X]\rightarrow k[G\times X]\cong k[G]\otimes_k k[X]$.
This makes $k[X]$ a (left) $k[G]$-comodule algebra.
So letting $X_1=\Spec k[X]$, $G_1$ acts on $X_1$.
Now it is easy to see that
$\Hom_{\Ps(\Zar(B_{G}(X)))}(\Bbb L,\Cal O_{X})$, which looks like
\[
0\rightarrow k[X]^\times\rightarrow
k[G\times X]^\times\rightarrow
k[G\times G\times X]^\times\rightarrow\cdots
\]
agrees with
$\Hom_{\Ps(\Zar(B_{G_1}(X_1)))}(\Bbb L,\Cal O_{X_1})$.

So replacing $G$ by $G_1$ and $X$ by $X_1$, we may assume that
both $G$ and $X$ are affine.
But this case is done in Step~3.

This completes the proof of the theorem.
\end{proof}

As a reduced $k$-scheme of finite type is quasi-compact quasi-separated reduced
and is dominated by some $k$-scheme of finite type, we immediately have

\begin{corollary}\label{main-cor2.thm}
Let $k$ be a field, 
$G$ a smooth $k$-group scheme of finite type, and 
$X$ a reduced $G$-scheme of finite type.
Then $H^1\alg(G,\O^\times)=\Ker(\rho:\Pic(G,X)\rightarrow\Pic(X))$ is
a finitely generated abelian group.
\qed
\end{corollary}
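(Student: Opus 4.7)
The plan is to deduce Corollary~\ref{main-cor2.thm} directly from Theorem~\ref{main.thm} by checking that the stronger hypothesis ``reduced of finite type'' already implies the hypotheses ``reduced, quasi-compact, quasi-separated, with a dominating morphism from a finite-type $k$-scheme.'' So the only work is bookkeeping at the level of definitions; there is no genuine new mathematical content, and accordingly no real obstacle to overcome.

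Concretely, I would argue as follows. First, ``of finite type over $k$'' means (by the EGA convention used throughout this paper) quasi-compact and locally of finite type, so quasi-compactness is automatic. Next, a $k$-scheme of finite type over the field $k$ is noetherian, and any noetherian scheme is quasi-separated: the intersection of two quasi-compact opens inside a noetherian space is again quasi-compact. Hence $X$ is both quasi-compact and quasi-separated without any extra assumption. For the dominating map, the simplest choice suffices: take $Z := X$ with $Z \to X$ the identity morphism; this is trivially dominating and $Z$ is of finite type over $k$ by hypothesis. With all assumptions of Theorem~\ref{main.thm} now verified, that theorem applies and yields at once the claimed finite generation of $H^1\alg(G,\O^\times) = \Ker(\rho:\Pic(G,X)\to\Pic(X))$. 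The only ``care'' required is to confirm that the identification $H^1\alg(G,\O^\times) = \Ker\rho$ is already part of the statement of Theorem~\ref{main.thm}, which comes from Proposition~\ref{five-term.thm}.
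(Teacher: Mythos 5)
Your proposal is correct and is exactly the argument the paper intends: a reduced $k$-scheme of finite type is automatically quasi-compact and quasi-separated (being noetherian), and the identity $Z:=X\rightarrow X$ serves as the dominating morphism from a finite-type scheme, so Theorem~\ref{main.thm} applies verbatim. No further comment is needed.
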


\begin{corollary}\label{main-cor.thm}
Let $k$, $G$, $X$, and $Z\rightarrow X$ be as in Theorem~\ref{main.thm}.
Let $\varphi:X\rightarrow Y$ be a $G$-invariant morphism.
If $\O_Y\rightarrow (\varphi_*\O_X)^G$ is an isomorphism, then
the kernel of the map $\varphi^*:\Pic(Y)\rightarrow\Pic(X)$ is
a finitely generated abelian group.
\end{corollary}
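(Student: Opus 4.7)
The plan is to exploit the factorization of the ordinary pullback $\varphi^*\colon\Pic(Y)\to\Pic(X)$ through the equivariant Picard group $\Pic(G,X)$, and then quote Theorem~\ref{main.thm} directly.

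First I would recall from the discussion preceding Lemma~\ref{pic-injective.thm} that, since $\varphi$ is $G$-invariant, the inverse image of a line bundle $\Cal L$ on $Y$ carries a canonical $G$-equivariant structure (namely $\varphi^*$ of $\Cal L$ equipped with the trivial $G$-action, which makes sense because $G$ acts trivially on $Y$). Denoting this enhanced map by $\varphi^*_G\colon\Pic(Y)\to\Pic(G,X)$, we have by construction the factorization
\[
\varphi^*=\rho\circ\varphi^*_G\colon\Pic(Y)\rightarrow\Pic(G,X)\rightarrow\Pic(X),
\]
where $\rho$ is the $G$-forgetting map of Theorem~\ref{main.thm}.

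Next I would apply Lemma~\ref{pic-injective.thm}, whose hypothesis $\O_Y\to(\varphi_*\O_X)^G$ being an isomorphism is exactly what is assumed here, to conclude that $\varphi^*_G$ is injective. Therefore $\varphi^*_G$ restricts to an injection
\[
\Ker\bigl(\varphi^*\colon\Pic(Y)\to\Pic(X)\bigr)\hookrightarrow \Ker\bigl(\rho\colon\Pic(G,X)\to\Pic(X)\bigr).
\]

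Finally I would invoke Theorem~\ref{main.thm}, which identifies $\Ker\rho$ with $H^1\alg(G,\O^\times)$ and asserts that it is a finitely generated abelian group. Since subgroups of finitely generated abelian groups are finitely generated, $\Ker\varphi^*$ inherits this property, completing the proof. There is no genuine obstacle here: all the substantive content lies in Theorem~\ref{main.thm} and in Lemma~\ref{pic-injective.thm}, and the corollary is essentially a formal consequence of combining them through the elementary factorization above.
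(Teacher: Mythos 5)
Your proposal is correct and is essentially identical to the paper's own proof: the paper phrases the factorization $\varphi^*=\rho\circ\varphi^*_G$ as a commutative diagram with exact rows, applies Lemma~\ref{pic-injective.thm} to get injectivity of the vertical map $\Pic(Y)\rightarrow\Pic(G,X)$, and concludes exactly as you do that $\Ker\varphi^*$ embeds into the finitely generated group $\Ker\rho$ from Theorem~\ref{main.thm}.
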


\begin{proof}
Consider the commutative diagram
\[
\xymatrix{
0 \ar[r]& \Ker \rho \ar[r] & \Pic(G,X) \ar[r]^\rho & \Pic(X) \\
0 \ar[r]& \Ker \varphi^*\ar[r]& \Pic(Y) \ar[u]_{\varphi^*} \ar[r]^{\varphi^*}
& \Pic(X) \ar[u]_{\id}
}
\]
with exact rows.
Then by Lemma~\ref{pic-injective.thm}, 
the vertical arrow $\varphi^*:\Pic(Y)\rightarrow \Pic(G,X)$ is
an injective map, which maps $\Ker \varphi^*$ injectively into $\Ker \rho$.
As $\Ker\rho$ is finitely generated by the theorem, 
$\Ker\varphi^*$ is also finitely generated.
\end{proof}

\begin{lemma}
Let $G$ be a $k$-group scheme of finite type.
Then the character group
\[
\Cal X(G)=\{\chi\in k[G]^\times \mid \chi(g_1g_0)=\chi(g_1)\chi(g_0)\}
\]
is a finitely generated abelian group.
\end{lemma}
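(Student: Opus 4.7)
The plan is to identify the character group $\Cal X(G)$ with the equivariant Picard group $\Pic(G,\Spec k)$ of the trivial base, and then invoke Theorem~\ref{main.thm} with $X=\Spec k$. A group-like element $\chi\in k[G]^\times$ with $\Delta(\chi)=\chi\otimes\chi$ determines a rank-one $(G,\O_{\Spec k})$-module structure on the one-dimensional $k$-vector space $k$ whose coaction sends $1\mapsto 1\otimes\chi$; conversely, every rank-one $(G,\O_{\Spec k})$-module arises in this way, and its isomorphism class is determined by $\chi$. Tensor product of modules corresponds to multiplication of characters, so this identification is an isomorphism of abelian groups $\Cal X(G)\cong \Pic(G,\Spec k)$.

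Since $\Pic(\Spec k)=0$, the forgetful map $\rho$ of Proposition~\ref{five-term.thm} applied to $X=\Spec k$ is zero, so the five-term exact sequence degenerates to give
\[
\Cal X(G)\cong \Pic(G,\Spec k)\cong H^1\alg(G,\O^\times).
\]
The next step is then to apply Corollary~\ref{main-cor2.thm} to $X=\Spec k$, which is a reduced $k$-scheme of finite type on which $G$ acts trivially, and conclude that $H^1\alg(G,\O^\times)$, hence $\Cal X(G)$, is a finitely generated abelian group.

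The only delicate point is that Corollary~\ref{main-cor2.thm} as stated assumes $G$ smooth. To accommodate a possibly non-smooth $G$, I would first replace $G$ by its affinization $G_1=\Spec k[G]$, which does not affect $\Cal X(G)$ because $\Bbb G_m$ is affine, and then split along the short exact sequence $1\rightarrow G_1^\circ\rightarrow G_1\rightarrow \pi_0(G_1)\rightarrow 1$: the étale quotient $\pi_0(G_1)$ is finite and contributes only finitely many characters, while the connected part is analyzed after passing to the perfect closure of $k$, where $(G_1^\circ)_\red$ is a smooth subgroup scheme covered by the corollary and the infinitesimal cokernel is a finite group scheme handled by Step~2 of the proof of Theorem~\ref{main.thm}. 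This non-smooth reduction is the only step beyond a direct specialization of the theorem; the smooth case is immediate.
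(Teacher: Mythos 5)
Your route is genuinely different from the paper's. The paper proves this lemma by bare-hands structure theory: the exact sequence $0\rightarrow\Cal X(G/N)\rightarrow\Cal X(G)\rightarrow\Cal X(N)$ for a closed normal subgroup $N$, applied successively to $N=G^\circ$, the unipotent radical, and $[G,G]$, reduces everything to finite group schemes, $\Bbb G_a$, and tori, where the answer is computed directly. You instead observe that $\Cal X(G)=H^1\alg(G,\O^\times)$ for the trivial action on $X=\Spec k$ (correct: $Z^1\alg$ consists exactly of the group-like units and $B^1\alg$ is trivial since $k[X]^\times=k^\times$), and quote Corollary~\ref{main-cor2.thm}. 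There is no circularity --- the proof of Theorem~\ref{main.thm} uses only the finiteness of $\Cal X(G)$ for finite group schemes, which it establishes directly --- so for smooth $G$ your argument is a legitimate and short cut. The trade-off is that the paper's argument is elementary and self-contained, while yours imports the full strength of Theorem~\ref{main.thm} and therefore inherits its smoothness hypothesis, forcing exactly the extra reduction you sketch at the end.

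That reduction is where the gap is. Over the perfect closure, $(G_1^\circ)_\red$ is indeed a smooth subgroup scheme, but it need \emph{not} be normal, so ``the infinitesimal cokernel'' need not exist as a group scheme: for $G=\alpha_p\rtimes\Bbb G_m$ (with $\Bbb G_m$ acting on $\alpha_p$ by scaling) one has $G_\red=\Bbb G_m$, which is not normal. Since the exact sequence $0\rightarrow\Cal X(G/N)\rightarrow\Cal X(G)\rightarrow\Cal X(N)$ requires $N$ normal, you should run the d\'evissage in the opposite direction: over a perfect field take $N=\Ker(F^n)$, the $n$-th Frobenius kernel, which is an infinitesimal \emph{normal} subgroup scheme with $G/N$ smooth for $n\gg 0$; then $\Cal X(G/N)$ is finitely generated by your smooth-case argument and $\Cal X(G)/\Cal X(G/N)$ embeds in $\Cal X(N)$. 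A second, related problem: Step~2 of the proof of Theorem~\ref{main.thm} does not cover infinitesimal group schemes --- it treats only finite group schemes that become constant after a finite Galois extension, i.e.\ \'etale ones --- so it cannot be cited for the infinitesimal piece. What you need there is the separate (easy) fact that $\Cal X(N)$ is finite for any finite group scheme $N$, e.g.\ because group-like elements of a Hopf algebra are linearly independent, whence $\#\Cal X(N)\leq\dim_k k[N]$. With these two repairs your argument goes through.
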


\begin{proof}
Extending $k$, we may assume that $k$ is algebraically closed.
As $\Cal X(G)=\Cal X(\Spec k[G])$, we may assume that $G$ is affine.
If $G$ is finite, then $G$ has only finitely many irreducible representations,
so $\Cal X(G)$ is also finite.
If $G$ is $\Bbb G_a$, then $k[\Bbb G_a]^\times=k^\times$, and so
$\Cal X(\Bbb G_a)$ is trivial.
If $G=\Bbb G_m$, then $\Cal X(G)\cong \Bbb Z$, as is well-known.
If $N$ is a closed normal subgroup of $G$, then
\[
0\rightarrow \Cal X(G/N)\rightarrow \Cal X(G)\rightarrow \Cal X(N)
\]
is exact.

Letting $N=G^\circ$ be the identity component of $G$, we may assume that
$G$ is either finite or connected.
The finite case is already done, so we consider the case that 
$G$ is connected.
Letting $N$ be the unipotent radical, we may assume that
$G$ is either reductive or unipotent.
If $G$ is unipotent, then $G$ has a normal subgroup $N$ which is isomorphic
to $\Bbb G_a$ and $G/N$ is still unipotent.
So this case is done by the induction on the dimension.
If $G$ is reductive, then $\Cal X(G)\cong \Cal X(G/[G,G])$, and
$G/[G,G]$ is a torus.
So we may assume that $G$ is a torus, and this case is also done by the
induction on the dimension.
\end{proof}

\begin{lemma}[cf.~{\cite[(1.8)]{Sweedler}}, {\cite[Theorem~2]{Rosenlicht}}]
\label{units.thm}
Let $k$ be a field, $X$ and $Y$ $k$-schemes such that 
$X$ is quasi-compact quasi-separated and $k[X]$ reduced, 
and $k$ is algebraically closed in $k[X]$.
Assume one of the following.
\begin{enumerate}
\item[\bf 1]
$Y$ is integral with the rational function field 
$\O_{Y,\eta}$ being a regular extension of $k$,
where $\eta$ is the generic point of $Y$.
\item[\bf 2] $Y$ is quasi-compact quasi-separated, $B=\Gamma(Y,\O_Y)$ is
a domain such that the quotient field $Q(B)$ is a regular extension of $k$.
\end{enumerate}
Then for any $\alpha\in\Gamma(X\times Y,\O_{X\times Y})^\times$,
there exist $\mu\in\Gamma(X,\O_X)^\times $ and 
$\nu\in\Gamma(Y,\O_Y)^\times$ such that
$\alpha(x,y)=\mu(x)\nu(y)$ for $x\in X$ and $y\in Y$.
\end{lemma}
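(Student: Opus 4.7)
The plan is to reduce Case~1 to Case~2 by an affine covering and gluing argument, and then to prove Case~2 by base-changing to the fraction field $K = Q(B)$, applying a Rosenlicht--Sweedler type rank-one argument in $k[X] \otimes_k K$, and descending the resulting factorization back to $B$.

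For the reduction of Case~1 to Case~2, cover $Y$ by affine opens $U_i = \Spec B_i$. Since $Y$ is integral, each $U_i$ is quasi-compact quasi-separated, $B_i$ is a domain, and $Q(B_i) = \O_{Y,\eta}$ is a regular extension of $k$; so Case~2 applied on each $X \times U_i$ yields $\alpha|_{X \times U_i} = \mu_i \otimes \nu_i$ with $\mu_i \in k[X]^\times$ and $\nu_i \in B_i^\times$. The uniqueness in Lemma~\ref{tensor.thm}, applied on the (necessarily nonempty) overlaps of two affines in the integral scheme $Y$, gives $\mu_j = c_{ij}\mu_i$ and $\nu_j = c_{ij}^{-1}\nu_i$ on $U_i \cap U_j$ for some $c_{ij} \in k^\times$; after rescaling the $\nu_i$, the $\mu_i$ align into a single $\mu \in k[X]^\times$ and the rescaled $\nu_i$ glue to a global $\nu \in \Gamma(Y,\O_Y)^\times$ with $\alpha = \mu \otimes \nu$.

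For Case~2, Lemma~\ref{tensor2.thm} identifies $\Gamma(X \times Y, \O_{X \times Y}) = k[X] \otimes_k B$, so $\alpha \in (k[X] \otimes_k B)^\times$. Let $K = Q(B)$; by flatness of $k[X]$ over $k$ the map $k[X] \otimes_k B \hookrightarrow k[X] \otimes_k K$ is injective, so $\alpha$ and $\alpha^{-1}$ become units of $k[X] \otimes_k K$. The central task, following Sweedler, is to prove that $\alpha$ is a pure tensor $\mu \otimes \lambda$ in $k[X] \otimes_k K$ with $\mu \in k[X]^\times$ and $\lambda \in K^\times$. Write $\alpha = \sum_i f_i \otimes b_i$ and $\alpha^{-1} = \sum_j g_j \otimes c_j$ with each family linearly independent over $k$: regularity of $K/k$ forces $K/k$ to be separable and $k$ to be algebraically closed in $K$, so $k[X] \otimes_k K$ remains reduced and the algebraic-closedness of $k$ in $k[X]$ persists after base change. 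One then exploits the rank-one criterion recalled in the paragraph preceding Lemma~\ref{tensor.thm}, combined with the field-extension descent of Lemma~\ref{tensor3.thm} (which lets one verify ``rank one'' after enlarging $K$), to force the tensor rank of $\alpha$ to be one.

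Once $\alpha = \mu \otimes \lambda$ holds in $k[X] \otimes_k K$, descent to $B$ is routine: tensoring the exact sequence $0 \to B \to K \to K/B \to 0$ with the flat $k$-module $k[X]$ gives exactness of $0 \to k[X] \otimes_k B \to k[X] \otimes_k K \to k[X] \otimes_k (K/B) \to 0$, and $\mu^{-1}\alpha \in k[X] \otimes_k B$ has image $1 \otimes \lambda$, which, upon writing $k[X] = k \oplus V$ as $k$-vector spaces, projects to the class of $\lambda$ in the $k$-summand of $k[X] \otimes_k (K/B)$; vanishing forces $\lambda \in B$. Applying the same argument to $\lambda^{-1}$ yields $\lambda \in B^\times$, and we conclude $\alpha = \mu \otimes \nu$ with $\nu := \lambda \in B^\times$. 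The main obstacle is the pure tensor factorization step in $k[X] \otimes_k K$: here all the hypotheses on $K/k$ and on $k[X]$ must conspire to rule out higher-rank units of $k[X] \otimes_k K$ whose inverses also arise from $k[X] \otimes_k B$, and it is exactly this rigidity that the classical arguments of Rosenlicht and Sweedler supply.
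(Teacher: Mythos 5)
Your outer reductions are sound and essentially match the paper's: Lemma~\ref{tensor2.thm} to identify $\Gamma(X\times Y,\O_{X\times Y})$ with $k[X]\otimes_k B$ in the affine/quasi-compact case, and a gluing of local factorizations via the uniqueness in Lemma~\ref{tensor.thm} to pass from affine opens to general integral $Y$ (the paper compares each affine open to the generic point rather than gluing on overlaps, but since $Y$ is irreducible your overlaps are nonempty and the two organizations are equivalent). The final descent of $\lambda$ from $K$ to $B$ via $k[X]=k\oplus V$ is also fine.

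The genuine gap is the step you yourself flag as ``the main obstacle'': the proof that $\alpha$ is a pure tensor. You cannot get this by working over $K=Q(B)$ and invoking a ``rank-one criterion'' plus Lemma~\ref{tensor3.thm}; nothing about reducedness of $k[X]\otimes_k K$ or algebraic closedness of $k$ in $k[X]$ rules out higher tensor rank for a unit, and no collision/rigidity argument is available with a single copy of $B$. The actual mechanism (Sweedler's) is quite specific and is set up carefully in the paper: first reduce to $A=k[X]$ and $B$ finitely generated over $k$ and pass to $k\sep$ (using Lemma~\ref{tensor3.thm} to descend the factorization afterwards; this also secures a $k$-algebra map $B\to k$ for specialization); then form $R=\bigotimes_{\frak U}B$ with $\frak U$ \emph{uncountable} and $K=Q(R)$, check that $K$ is a regular extension of $k$ integrally closed in $A\otimes_k K$, so that Lemma~\ref{unit-constant.thm} gives $(A\otimes_k K)^\times/K^\times\cong\Bbb Z^n$, a \emph{countable} group. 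The uncountably many embeddings of $B$ into $K$ produce uncountably many copies of $\alpha$ in $(A\otimes_k K)^\times$, two of which must agree modulo $K^\times$, and unwinding that identity (specializing one tensor factor) is what forces $\alpha$ to be a pure tensor. None of this apparatus --- the finite-generation reduction, the rational point of $B$, the uncountable tensor power, or the unit-group computation --- appears in your proposal, so the central claim remains unproved rather than merely left to a routine citation.
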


\begin{proof}
We may and shall assume that $X$ is nonempty.

First consider the case that $Y=\Spec B$ is affine.
Then {\bf 1} and {\bf 2} say exactly the same thing.
By Lemma~\ref{tensor2.thm}, 
$\Gamma(X\times Y,\O_{X\times Y})=\Gamma(X,\O_X)\otimes_k B$.
Replacing $X$ by $\Spec \Gamma(X,\O_X)$, we may assume that
$X=\Spec A$ is affine.
There are finitely generated $k$-subalgebras $A_0$ of $A$ and $B_0$ of $B$ 
such that
$\alpha\in (A_0\otimes_k B_0)^\times$.
We are to prove that there exist some $\mu\in A$ and $\nu\in B$ such that
$\alpha=\mu\otimes\nu$.
Replacing $A$ by $A_0$ and $B$ by $B_0$, 
we may assume that $A$ and $B$ are finitely generated over $k$.
Let $k\sep$ be the separable closure of $k$.
By \cite[(19.1)]{SH}, $k\sep$ is normal over $k$.
Then by \cite[(6.14.4)]{EGA-IV-2}, $k\sep$ is integrally closed 
in $k\sep\otimes_k A$.
Clearly, $k\sep\otimes_k A$ is reduced and finitely generated over  $k\sep$.
Moreover, $k\sep\otimes_k B$ is a finitely generated domain over $k\sep$, 
and $Q(k\sep\otimes_k B)$ is a regular extension field over $k\sep$.
By Lemma~\ref{tensor3.thm}, replacing $k$ by its separable closure
$k\sep$, we may assume that $k$ is separably closed.
As $Y=\Spec B$ is geometrically integral over $k$, 
there is at least one $k$-algebra map $B\rightarrow k$ by
\cite[(AG.13.3)]{Borel}.

As in the proof of \cite[(1.8)]{Sweedler}, 
set $R=\bigotimes_{\frak U} B$, where $\frak U$ is an uncountable set.
Then $R$ is an integral domain, and its field of fractions $K$ is a 
regular extension of $k$.
By \cite[(19.1)]{SH}, $K$ is normal over $k$.
By \cite[(6.14.4)]{EGA-IV-2}, $K$ is integrally closed in $A\otimes_k K$.
By Lemma~\ref{unit-constant.thm}, 
$(A\otimes_k K)^\times/K^\times\cong \Bbb Z^n$
for some $n$.

Arguing as in \cite[(1.8)]{Sweedler}, we have that 
$\alpha\in (A\otimes_k B)^\times$ is of the form $\mu\otimes\nu$ 
for $\mu\in A^\times$ and $\nu\in B^\times$, as desired.

Next consider the general $Y$, and assume {\bf 1}.
Let $Q=\O_{Y,\eta}$.
Then there exist some $\mu\in\Gamma(X,\O_X)^\times$ and
$\nu\in Q^\times $ such that $\alpha=\mu\otimes \nu$ in 
$\Gamma(X\times Z,\O_{X\times Z})=\Gamma(X,\O_X)\otimes_k Q$, where
$Z=\Spec Q$.
Also, for an affine open subset $U=\Spec C$ of $Y$, 
there exist some $\mu'\in\Gamma(X,\O_X)^\times$ and
$\nu'\in C^\times$ such that $\alpha=\mu'\otimes\nu'$ in 
$\Gamma(X,\O_X)\otimes_k C$.
So $\mu'\otimes\nu'=\alpha=\mu\otimes\nu$ in 
$\Gamma(X,\O_X)\otimes_k Q$.
By Lemma~\ref{tensor.thm}, there exists some $c\in k^\times$
such that $\mu'=c\mu$ and $\nu'=c^{-1}\nu$.
This shows that $\nu,\nu^{-1}\in \bigcap_U \Gamma(U,\O_Y)=\Gamma(Y,\O_Y)$.
So $\nu\in\Gamma(Y,\O_Y)^\times$.
$\alpha(x,y)=\mu(x)\nu(y)$ holds, and this is what we wanted to prove.

The case {\bf 2} is reduced easily to the affine case, using
Lemma~\ref{tensor2.thm}.
\end{proof}

The following corollary for the case that $k$ is algebraically closed 
goes back to Rosenlicht \cite[Theorem~3]{Rosenlicht}.

\begin{corollary}\label{char.thm}
Let $k$ be a field, and $G$ a smooth connected $k$-group scheme of finite type.
If $\chi\in k[G]^\times$ and $\chi(e)=1$, where $e$ is the unit element, then
$\chi\in \Cal X(G)$.
\end{corollary}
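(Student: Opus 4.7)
The claim is that the multiplication map $m\colon G\times G\to G$ satisfies $m^*(\chi)=\chi\otimes\chi$ in $k[G\times G]\cong k[G]\otimes_k k[G]$ (the isomorphism being Lemma~\ref{tensor2.thm}). I plan to deduce this directly from Lemma~\ref{units.thm} applied with $X=Y=G$, followed by a short evaluation at the identity.

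First I would verify that $G$ satisfies the hypotheses of Lemma~\ref{units.thm} in form~\textbf{1}. Since $G$ is smooth and connected over $k$ and carries a $k$-rational point $e$, $G$ is geometrically connected; combined with smoothness, this makes $G$ geometrically integral. In particular $G$ is integral, so $k[G]$ is a domain sitting inside the function field $\O_{G,\eta}$, which is a regular extension of $k$. Hence $k$ is algebraically closed in $k[G]$, and $G$ is quasi-compact quasi-separated with $k[G]$ reduced. Both $X=G$ and $Y=G$ thus fulfill the assumptions of Lemma~\ref{units.thm}.

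Next I would apply the lemma to $\alpha:=m^*(\chi)\in k[G\times G]^\times$, obtaining $\mu,\nu\in k[G]^\times$ with
\[
\chi(g_1g_0)=\mu(g_1)\nu(g_0)\qquad\text{for all functorial points }g_0,g_1.
\]
Setting $g_0=e$ gives $\chi(g_1)=\mu(g_1)\nu(e)$, and setting $g_1=e$ gives $\chi(g_0)=\mu(e)\nu(g_0)$; finally $g_0=g_1=e$ together with $\chi(e)=1$ yields $\mu(e)\nu(e)=1$. Substituting back,
\[
\chi(g_1g_0)=\mu(g_1)\nu(g_0)=\frac{\chi(g_1)}{\nu(e)}\cdot\frac{\chi(g_0)}{\mu(e)}=\chi(g_1)\chi(g_0),
\]
so $\chi\in\Cal X(G)$.

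The only conceptual point is verifying geometric integrality of $G$ so that Lemma~\ref{units.thm}~\textbf{1} applies on both factors; the rest is a one-line evaluation. Smoothness plus connectedness plus the existence of the $k$-rational identity is the standard route to this, and there is no further obstacle.
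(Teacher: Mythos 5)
Your proposal is correct and follows essentially the same route as the paper: apply Lemma~\ref{units.thm} to $m^*\chi\in k[G\times G]^\times$ to factor $\chi(g_1g_0)=\mu(g_1)\nu(g_0)$, then evaluate at the identity to recover multiplicativity. The paper's proof is just a terser version of the same argument (it normalizes $\mu,\nu$ so that $\chi_1(e)=\chi_0(e)=1$ first), and your verification that $G$ is geometrically integral, so that the units lemma applies, is a correct filling-in of the hypothesis check that the paper leaves implicit.
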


\begin{proof}
We can write $\chi(g_1g_0)=\chi_1(g_1)\chi_0(g_0)$ with
$\chi_1(e)=\chi_0(e)=1$.
Then letting $g_1=e$ or $g_0=e$, we have $\chi_1=\chi_0=\chi$.
So $\chi\in\Cal X(G)$.
\end{proof}

\begin{lemma}\label{polynomial.thm}
Let $k$ be a field, and $Y$ a $k$-scheme.
Let $X$ be a quasi-compact quasi-separated $k$-scheme such that 
$k[X]$ is reduced.
Assume that either
\begin{enumerate}
\item[\bf 1] $\bar k\otimes_k Y$ is integral; or
\item[\bf 2] $\bar k\otimes_k k[Y]$ is integral, and 
$Y$ is quasi-compact quasi-separated,
\end{enumerate}
where $\bar k$ is the algebraic closure of $k$.
If the unit group of 
$\bar k\otimes_k k[Y]$ is ${\bar k}^\times$,
then $k[X]^\times\rightarrow k[X\times Y]^\times$ is an isomorphism.
\end{lemma}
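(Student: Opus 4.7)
My plan is to test a given unit $\alpha\in k[X\times Y]^\times$ at each minimal prime of $k[X]$ and apply Lemma~\ref{units.thm} over a suitable algebraic extension of $k$, using the hypothesis on $\bar k\otimes_k k[Y]$ to force the $Y$-component to be constant. Injectivity of $k[X]^\times\rightarrow k[X\times Y]^\times$ is immediate from the inclusion $k[X]\hookrightarrow k[X\times Y]$.

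For surjectivity, for each minimal prime $\fp$ of $k[X]$, let $K_\fp:=Q(k[X]/\fp)$ and let $k_\fp$ be the algebraic closure of $k$ in $K_\fp$; since $K_\fp$ is a field, so is $k_\fp$, and $k_\fp$ is algebraically closed in $K_\fp$ by construction. I would apply Lemma~\ref{units.thm} over the base field $k_\fp$ with $\Spec K_\fp$ in the role of $X$ and $Y_{k_\fp}:=\Spec k_\fp\times_k Y$ in the role of $Y$. Hypothesis~{\bf 1} or~{\bf 2} for $Y_{k_\fp}/k_\fp$ is inherited from the corresponding hypothesis of the present lemma via the identity $\bar k\otimes_{k_\fp}Y_{k_\fp}=\bar k\otimes_k Y$ (respectively $\bar k\otimes_{k_\fp}k_\fp[Y_{k_\fp}]=\bar k\otimes_k k[Y]$) and standard arguments showing integrality and regularity of extensions descend through faithfully flat base change. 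Lemma~\ref{units.thm} then produces $\alpha_\fp(x,y)=\mu(x)\nu(y)$ with $\mu\in K_\fp^\times$ and $\nu\in k_\fp[Y_{k_\fp}]^\times$. The key computation is $k_\fp[Y_{k_\fp}]^\times=k_\fp^\times$: any such unit maps to a unit of $\bar k\otimes_k k[Y]$, hence lies in $\bar k^\times$ by hypothesis, and a basis argument identifies the intersection $\bar k\cap(k_\fp\otimes_k k[Y])$ taken inside $\bar k\otimes_k k[Y]$ with $k_\fp$. So $\alpha_\fp$ is the pullback of a unit in $K_\fp$ from the projection $\Spec K_\fp\times Y_{k_\fp}\rightarrow\Spec K_\fp$.

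To descend, cover $Y$ by affine opens $\{U_i\}$; by Lemma~\ref{tensor2.thm}, $k[X\times U_i]=k[X]\otimes_k k[U_i]$. Fix a $k$-basis $\{b_\lambda\}$ of $k[U_i]$ with $b_0=1$ and write $\alpha|_{X\times U_i}=\sum_\lambda a_\lambda\otimes b_\lambda$. Since the image of this element in $K_\fp\otimes_k k[U_i]$ lies in $K_\fp\otimes 1$ for every minimal prime $\fp$, and since $\bigcap_\fp\fp=0$ by reducedness of $k[X]$, we conclude $a_\lambda=0$ for $\lambda\neq 0$. Thus $\alpha|_{X\times U_i}$ is the pullback of some $a_0^{(i)}\in k[X]$; these are compatible on overlaps (by injectivity of $k[X]\hookrightarrow k[X]\otimes_k k[V]$ for any affine $V$) and patch to a global $\tilde\alpha\in k[X]^\times$ with $\alpha=p_X^*\tilde\alpha$, where $\tilde\alpha$ is a unit in $k[X]$ by comparing coefficients of $\alpha^{-1}$ in the basis $\{b_\lambda\}$. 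The main technical step is verifying the identity $k_\fp[Y_{k_\fp}]^\times=k_\fp^\times$, which is precisely where the unit-triviality hypothesis on $\bar k\otimes_k k[Y]$ is used.
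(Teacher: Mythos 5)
Your proof is correct, but it takes a genuinely different route from the paper's. The paper first reduces to connected $X$, notes that the integral closure $K$ of $k$ in $k[X]$ is then an algebraic extension \emph{field}, and applies Lemma~\ref{units.thm} a single time over the base field $K$ to the pair $(X,K\otimes_k Y)$; the factorization $\alpha=\mu\nu$ with $\nu\in K[K\otimes_k Y]^\times=K^\times$ then lands directly in $k[X]^\times$, with no descent step. You instead apply Lemma~\ref{units.thm} only over the fields $K_\fp=Q(k[X]/\fp)$ for the minimal primes $\fp$ of $k[X]$ (so the first factor of the product is just $\Spec K_\fp$), and recover the global statement from reducedness via the coefficient expansion $\alpha|_{X\times U_i}=\sum_\lambda a_\lambda\otimes b_\lambda$ and $\bigcap_\fp\fp=0$. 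What your version buys is that it never needs to split $X$ into connected components or to know that the integral closure of $k$ in $k[X]$ is a field; what it costs is the extra gluing over an affine cover of $Y$. Two points you should make explicit: the final identification of the various $a_0^{(i)}$ with a single element of $k[X]$ uses that $Y$ is connected (true here, since $\bar k\otimes_k k[Y]$, resp.\ $\bar k\otimes_k Y$, being integral rules out nontrivial idempotents in $k[Y]$; for disconnected $Y$ the statement itself would fail), and in case {\bf 1}, where $Y$ need not be quasi-compact, the equality $\Gamma(Y_{k_\fp},\O)=k_\fp\otimes_k k[Y]$ underlying your computation $k_\fp[Y_{k_\fp}]^\times=k_\fp^\times$ needs a word (any section involves only finitely many elements of a $k$-basis of $k_\fp$, so one can intersect over the affines of the integral scheme $Y$); the paper's own proof uses the same identification for $K\otimes_k Y$ without comment, so this is not a defect relative to its standard of rigor.
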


\begin{proof}
Note that $X$ has only finitely many connected components 
$X_1,\ldots,X_r$.
Replacing $X$ by each $X_i$, we may assume that $X$ is connected.
It is easy to check that the integral closure $K$ of $k$ in $k[X]$ is
an algebraic extension field of $k$.
Applying Lemma~\ref{units.thm} to $K$ instead of $k$, and $K\otimes_k Y$ 
instead of $Y$, For any unit $\alpha\in k[X\times Y]^\times$,
there exists some $\mu\in K[X]^\times$ and $\nu\in K[K\otimes_k Y]^\times$
such that $\alpha(x,y)=\mu(x)\nu(y)$.
By assumption, $K[K\otimes_k Y]^\times=K^\times$, and hence
$k[X]^\times\rightarrow k[X\times Y]^\times$ is surjective.
Injectivity is easy, and we are done.
\end{proof}

\begin{lemma}
Let $k$ be a field, and $G$ a quasi-compact quasi-separated $k$-group scheme
such that $k[G]$ is geometrically reduced over $k$.
Let $X$ be a $G$-scheme.
Assume that $\bar k\otimes_k X$ is integral, or $X$ is quasi-compact
quasi-separated and $\bar k\otimes_k k[X]$ is integral.
If the unit group of $\bar k\otimes_k k[X]$ is ${\bar k}^\times$, then
$H^i\alg(G,\O_X^\times)\cong H^i\alg(G,k^\times)$.
In particular, $H^1\alg(G,\O_X^\times)\cong \Cal X(G)$.
\end{lemma}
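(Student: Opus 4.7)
The plan is to compare the two cochain complexes
\[
0\to k[X]^\times\to k[G\times X]^\times\to k[G\times G\times X]^\times\to\cdots
\]
and
\[
0\to k^\times\to k[G]^\times\to k[G\times G]^\times\to\cdots
\]
computing $H^\bullet\alg(G,\O_X^\times)$ and $H^\bullet\alg(G,k^\times)$, respectively. The structural morphism $X\to\Spec k$ is $G$-invariant (the target carrying the trivial action), so inspection of the formulas for $d_i$ in (\ref{group-cohomology.par}) shows that the projections $\pi_n\colon G^n\times X\to G^n$ assemble into a morphism of simplicial schemes $B_G'(X)\to B_G'(\Spec k)$. Pullback of units along these projections gives a chain map between the two complexes, and it suffices to show that each $\pi_n^*\colon k[G^n]^\times\to k[G^n\times X]^\times$ is an isomorphism.

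For this I would apply Lemma~\ref{polynomial.thm} taking its ``$X$'' to be $G^n$ and its ``$Y$'' to be our $X$. Three conditions need to be checked. First, $G^n$ is quasi-compact quasi-separated because $G$ is. Second, $k[G^n]$ must be reduced: by Lemma~\ref{tensor2.thm}, $k[G^n]\cong k[G]^{\otimes_k n}$; the hypothesis that $k[G]$ is geometrically reduced means $\bar k\otimes_k k[G]$ is reduced, and since $\bar k$ is perfect, tensor products of reduced $\bar k$-algebras over $\bar k$ remain reduced, so
\[
\bar k\otimes_k k[G^n]\cong (\bar k\otimes_k k[G])^{\otimes_{\bar k} n}
\]
is reduced and faithful flatness of $\bar k/k$ transfers this back to $k[G^n]$ itself. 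Third, the alternative hypotheses on $Y=X$ (either $\bar k\otimes_k X$ integral, or $X$ quasi-compact quasi-separated with $\bar k\otimes_k k[X]$ integral) together with the unit condition $(\bar k\otimes_k k[X])^\times=\bar k^\times$ are exactly what the present lemma assumes. So Lemma~\ref{polynomial.thm} applies and each $\pi_n^*$ is an isomorphism, yielding an isomorphism of chain complexes and hence $H^i\alg(G,\O_X^\times)\cong H^i\alg(G,k^\times)$ for all $i$.

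For the final assertion I compute $H^1\alg(G,k^\times)$ directly. The two face maps $G\to\Spec k$ coincide with the structural morphism, so the first differential $k^\times\to k[G]^\times$, namely $d_0^*-d_1^*$, is trivial and $B^1=0$. A $1$-cocycle is a unit $\chi\in k[G]^\times$ satisfying $\chi(g_1)\chi(g_1g_0)^{-1}\chi(g_0)=1$, i.e.\ $\chi(g_1g_0)=\chi(g_1)\chi(g_0)$; setting $g_0=g_1=e$ forces $\chi(e)=1$ since $\chi(e)\in k^\times$, so $Z^1=\Cal X(G)$ and $H^1\alg(G,k^\times)=\Cal X(G)$.

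The only real point of friction is the reducedness of $k[G^n]$, and this is handled cleanly by combining the geometric reducedness hypothesis with the perfectness of $\bar k$; everything else is functorial bookkeeping together with a direct application of the previous lemma.
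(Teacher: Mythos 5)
Your proof is correct and follows the same route as the paper: apply Lemma~\ref{polynomial.thm} with its ``$X$'' equal to $G^n$ and its ``$Y$'' equal to the given $G$-scheme, obtaining an isomorphism of cochain complexes, and then compute $H^1\alg(G,k^\times)=\Cal X(G)$ directly (the paper delegates this last step to the subsequent lemma, but the content is the same). Your explicit verification that $k[G^n]$ is reduced via geometric reducedness of $k[G]$ and perfectness of $\bar k$ is exactly the point the hypothesis is there for, and is a detail the paper leaves unstated.
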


\begin{proof}
By Lemma~\ref{polynomial.thm},
the map 
$k[G^n]^\times\rightarrow k[G^n\times X]^\times$
is an isomorphism.
The lemma follows.
For the last assertion, see the next lemma.
\end{proof}

\begin{lemma}[cf.~{\cite[(7.1)]{Dolgachev}}]
Let $k$ be a field, $G$ a $k$-group scheme, and
$X$ a $G$-scheme.
Assume that $k[G]^\times\rightarrow k[G\times X]^\times$ induced by the
first projection is an isomorphism.
Then $H^1\alg(G,\O^\times)=\Ker(\rho:\Pic(G,X)\rightarrow\Pic(X))$ is
isomorphic to $\Cal X(G)$.
\end{lemma}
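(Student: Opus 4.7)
The plan is to explicitly compute the groups $Z^1\alg(G,\O^\times)$ and $B^1\alg(G,\O^\times)$ of the complex
\[
0\rightarrow k[X]^\times \xrightarrow{d_0-d_1} k[G\times X]^\times
\xrightarrow{d_0-d_1+d_2} k[G\times G\times X]^\times\rightarrow\cdots
\]
using the hypothesis $p_1^*\colon k[G]^\times\xrightarrow{\sim} k[G\times X]^\times$, and then to conclude by taking the quotient. The key point is that this single hypothesis controls both the $1$-cocycles (by identifying $k[G\times X]^\times$ with $k[G]^\times$) and the $1$-coboundaries (by forcing elements of $k[X]^\times$ to be constants).

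For cocycles, I would write any $\alpha\in k[G\times X]^\times$ uniquely as $\alpha=p_1^*\chi$ with $\chi\in k[G]^\times$, so that $\alpha(g,x)=\chi(g)$ depends only on $g$. The cocycle condition coming from $d_0-d_1+d_2$, namely
\[
\alpha(g_1,g_0x)\,\alpha(g_1g_0,x)^{-1}\,\alpha(g_0,x)=1,
\]
then reduces at once to $\chi(g_1g_0)=\chi(g_1)\chi(g_0)$, which is precisely the defining relation of $\Cal X(G)$. Conversely any character gives rise to a cocycle by pullback along $p_1$, so the assignment $\chi\mapsto p_1^*\chi$ furnishes an isomorphism $Z^1\alg(G,\O^\times)\cong \Cal X(G)$.

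For coboundaries, given $\beta\in k[X]^\times$ I would look at $p_2^*\beta\in k[G\times X]^\times$. By hypothesis there exists $\chi'\in k[G]^\times$ with $p_2^*\beta=p_1^*\chi'$. Pulling back this equality along the identity section $e\times\id_X\colon X\rightarrow G\times X$ yields $\beta=\chi'(e)\cdot 1_X$ in $k[X]$; hence $\beta$ is the constant function with value $\chi'(e)\in k^\times$. Consequently the coboundary $(d_0-d_1)\beta$, given by $(g,x)\mapsto\beta(gx)/\beta(x)$, is identically $1$, so $B^1\alg(G,\O^\times)$ is trivial. Combining the two computations gives $H^1\alg(G,\O^\times)\cong \Cal X(G)$, as claimed. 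The argument is essentially an unwinding of definitions; the only step requiring care is the use of the hypothesis on $p_2^*\beta$ to kill all $1$-coboundaries.
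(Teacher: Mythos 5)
Your proof is correct and follows essentially the same route as the paper: identify $Z^1\alg(G,\O^\times)$ with $\Cal X(G)$ via $p_1^*$, and kill $B^1\alg(G,\O^\times)$ by showing that units of $k[X]$ are forced to be constants. The one clause the paper adds, which you should record as well, is that passing from the cocycle identity in $k[G\times G\times X]^\times$ back to the character identity in $k[G\times G]^\times$ uses the injectivity of $k[G\times G]\rightarrow k[G\times G\times X]$.
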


\begin{proof}
Note that $H^1\alg(G,\O^\times)$ is $Z^1\alg(G,\O^\times)/B^1\alg(G,\O^\times)$
by (\ref{group-cohomology.par}),
where
\[
Z^1\alg(G,\O^\times)=\{\chi\in k[G\times X]^\times\mid 
\chi(g_1,g_0x)\chi(g_1g_0,x)^{-1}\chi(g_0,x)=1\}
\]
and
\[
B^1\alg(G,\O^\times)=\{\phi(gx)\phi(x)^{-1}\mid \phi\in k[X]^\times\}.
\]
Note that for $\chi\in k[G\times X]^\times$ can be
written as $\chi(g,x)=\chi_0(g)$ for a unique $\chi_0\in k[G]^\times$.
Then as the map induced by the 
projection $k[G\times G]\rightarrow k[G\times G\times X]$ is injective,
$\chi\in Z^1\alg(G,\O^\times)$ if and only if 
$\chi_0\in\Cal X(G)$.

On the other hand, as $k[G\times X]^\times=k[G]^\times$, 
$k[X]^\times=k[X]^\times\cap k[G]^\times=k^\times$.
So $B^1\alg(G,\O^\times)$ is trivial, and we are done.
\end{proof}

\begin{example}
If $G$ is a quasi-compact quasi-separated $k$-group scheme with $k[G]$ reduced,
acting on the affine $n$-space $X=\Bbb A^n$.
Then $H^1\alg(G,X)\cong \Pic(G,X)\cong\Cal X(G)$.
\end{example}

\begin{proposition}\label{connected-cohomology.thm}
Let $G$ be a connected smooth $k$-group scheme of finite type, 
and $X$ a quasi-compact quasi-separated $G$-scheme such that
$k[X]$ is reduced and $k$ is integrally closed in $k[X]$.
Then for any $n\geq 0$, 
any $\chi\in k[G^n\times X]^\times$ can be written as
\[
\chi(g_{n-1},\ldots,g_1,g_0,x)=\chi_{n-1}(g_{n-1})\cdots \chi_0(g_0)\alpha(x)
\]
with $\chi_{n-1},\ldots,\chi_0\in\Cal X(G)$ and $\alpha\in k[X]^\times$
uniquely.
Moreover, $Z^0\alg(G,\O_X^\times)=(k[X]^G)^\times$, $B^0\alg(G,\O_X^\times)=
\{1\}$, and
\begin{multline*}
Z^n\alg(G,\O_X^\times)
=
\{\chi\in k[G^n\times X]^\times\mid \forall g\in G,\,\forall
x\in X\, \alpha(gx)
=\chi_0(g)\alpha(x),\\
\chi_1=\chi_2,\ldots,\chi_{n-3}=\chi_{n-2},\;
\chi_{n-1}=1\}=B^n\alg(G,\O_X^\times)
\end{multline*}
if $n\geq 2$ is even.
\[
Z^n\alg(G,\O_X^\times)=
\{\chi\in k[G^n\times X]^\times\mid \alpha=\chi_1=\chi_3=\cdots=\chi_{n-2}=1
\}
\]
if $n$ is odd.
$B^n\alg(G,\O_X^\times)=Z^n\alg(G,\O_X^\times)$ if $n\geq 3$ is odd, and
\[
B^1\alg(G,\O_X^\times)
=\{\chi\in k[G\times X]^\times\mid \alpha=1,\;\chi_0\in\Cal X(G,X)\},
\]
where
\[
\Cal X(G,X):=\{\chi\in\Cal X(G)\mid \exists\alpha\in k[X]^\times\,
\forall g\in G\,x\in X\,\alpha(gx)=\chi(g)\alpha(x)\}.
\]
Thus
\[
H^n\alg(G,\O_X^\times)
=
\left\{
\begin{array}{ll}
(k[X]^G)^\times & (n=0) \\
\Cal X(G)/\Cal X(G,X) & (n=1) \\
0 & (n\geq 2)
\end{array}
\right..
\]
\end{proposition}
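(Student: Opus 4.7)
The plan is to first establish a unique multiplicative factorization of any unit in $k[G^n\times X]$ and then translate the cocycle and coboundary conditions into the stated form.

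\textbf{Step 1 (factorization).} I prove by induction on $n$ that every $\chi\in k[G^n\times X]^\times$ has a unique decomposition
\[
\chi(g_{n-1},\ldots,g_0,x)=\chi_{n-1}(g_{n-1})\cdots\chi_0(g_0)\,\alpha(x),
\]
with $\chi_i\in\Cal X(G)$ and $\alpha\in k[X]^\times$.
The case $n=0$ is trivial.
For the inductive step I view $\chi\in k[G\times(G^{n-1}\times X)]^\times$ and apply Lemma~\ref{units.thm} with $Y=G$ and the ``$X$''-slot played by $G^{n-1}\times X$.
The hypothesis on $Y=G$ holds because $G$, being smooth, connected, and possessing the identity as a $k$-rational point, is geometrically integral, so $\O_{G,\eta}$ is a regular extension of $k$.
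For the hypotheses on $G^{n-1}\times X$: quasi-compactness and quasi-separatedness are preserved under products; reducedness of $k[G^{n-1}\times X]\cong k[G]^{\otimes(n-1)}\otimes_k k[X]$ (via Lemma~\ref{tensor2.thm}) follows from $k[G]$ being geometrically reduced; and $k$ is integrally closed in $k[G^{n-1}\times X]$ by the argument of Step~3 in the proof of Theorem~\ref{main.thm}, using that $k$ is integrally closed in $k[G]$ (geometric integrality) and in $k[X]$ (hypothesis).
Lemma~\ref{units.thm} then yields $\chi(g_{n-1},\ldots,g_0,x)=\mu(g_{n-2},\ldots,g_0,x)\cdot\nu(g_{n-1})$ with $\mu\in k[G^{n-1}\times X]^\times$ and $\nu\in k[G]^\times$; rescaling so that $\nu(e)=1$, Corollary~\ref{char.thm} gives $\nu\in\Cal X(G)$, and applying the induction hypothesis to $\mu$ completes the decomposition.
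Uniqueness follows by iterated application of Lemma~\ref{tensor.thm} after the normalization $\chi_i(e)=1$.

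\textbf{Step 2 (cocycle condition).} Substitute the factored form into
\[
(\delta\chi)(g_n,\ldots,g_0,x)=\prod_{i=0}^{n+1}(d_i^*\chi)^{(-1)^i}
\]
and use the character identity $\chi_j(g_jg_{j-1})=\chi_j(g_j)\chi_j(g_{j-1})$ to split each merged factor $\chi_{i-1}(g_ig_{i-1})$ in $d_i^*\chi$ for $0<i<n+1$.
Bookkeeping the exponent of each $\chi_j(g_k)$ in the alternating product shows that adjacent face maps produce the same character at $g_k$ with opposite signs, so the product telescopes.
The residual terms are: an $\alpha$-contribution equal to $\alpha(g_0x)\alpha(x)^{-1}$ for $n$ even and $\alpha(g_0x)$ for $n$ odd; a character $\chi_{n-1}(g_n)$ with net exponent $+1$ exactly when $n$ is even; and specific pairwise coincidence conditions on the remaining $\chi_j$'s.
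Setting $\delta\chi=1$ and extracting conditions by successively specializing the $g_i$'s to $e$ (and invoking the uniqueness of Step~1 for the factorization of $\delta\chi$) yields exactly the stated descriptions of $Z^n\alg(G,\O_X^\times)$ in both parities.

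\textbf{Step 3 (coboundary image and $H^n$).} An identical expansion applied to a factored $(n-1)$-cochain gives the factored form of its coboundary, which is precisely $B^n\alg(G,\O_X^\times)$.
Comparing with $Z^n$ yields $B^n=Z^n$ for $n\geq 2$ even and for $n\geq 3$ odd (hence $H^n=0$), while for $n=1$ the only discrepancy is the condition $\chi_0\in\Cal X(G,X)$ coming from semi-invariance of $\alpha$, giving $H^1=\Cal X(G)/\Cal X(G,X)$.
The $n=0$ computation is immediate from the definitions.

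The main obstacle is the combinatorial bookkeeping of Step~2: careful tracking of how the signs $(-1)^i$ interact with the index shifts induced by each $d_i$ in the factored expansion.
I would first verify the pattern by hand for $n=1,2,3$ to fix the structure and then formalize the general telescoping argument.
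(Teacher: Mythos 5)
Your proposal is correct and follows essentially the same route as the paper: factor units on $G^n\times X$ into characters times a unit on $X$ via Lemma~\ref{units.thm} and Corollary~\ref{char.thm}, then substitute into the alternating coboundary formula and read off $Z^n\alg$ and $B^n\alg$ by telescoping. The only difference is one of emphasis — you spell out the inductive factorization step (including the verification that $k$ stays integrally closed in $k[G^{n-1}\times X]$) that the paper leaves implicit, while the paper instead records the explicit expanded formulas for $\partial^n(\chi)$ that you defer to "bookkeeping."
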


\begin{proof}
Let $\partial^n$ be the boundary map in the complex in 
(\ref{group-cohomology.par}).
Then $\partial^0(\alpha)(g_0,x)=\alpha(g_0x)\alpha(x)^{-1}$, 
\begin{multline*}
\partial^n(\chi)(g_n,\ldots,g_0,x)
=
\chi(g_n,\ldots,g_1,g_0x)\chi(g_n,\ldots,g_2g_1,g_0,x)
\cdots\\
\chi(g_ng_{n-1},g_{n-2},\ldots,g_0,x)
\chi(g_n,\ldots,g_1g_0,x)^{-1}
\chi(g_n,\ldots,g_3g_2,g_1,g_0,x)^{-1}\\
\cdots\chi(g_n,g_{n-1}g_{n-2},\ldots,g_0,x)^{-1}
\chi(g_{n-1},g_{n-2},\ldots,g_0,x)^{-1}\\
=(\alpha(g_0x)\alpha(x)^{-1}\chi_0(g_0)^{-1})(\chi_1(g_2)\chi_2(g_2)^{-1})\\
\cdots(\chi_{n-3}(g_{n-2})\chi_{n-2}(g_{n-2})^{-1})\chi_{n-1}(g_n)
\end{multline*}
if $n\geq 2$ is even, and
\begin{multline*}
\partial^n(\chi)(g_n,\ldots,g_0,x)
=
\chi(g_n,\ldots,g_1,g_0x)\chi(g_n,\ldots,g_2g_1,g_0,x)
\cdots\\
\chi(g_n,g_{n-1}g_{n-2},\ldots,g_0,x)\chi(g_{n-1},g_{n-2},\ldots,g_0,x)
\chi(g_n,\ldots,g_1g_0,x)^{-1}\\
\chi(g_n,\ldots,g_3g_2,g_1,g_0,x)^{-1}\cdots
\chi(g_ng_{n-1},g_{n-2},\ldots,g_0,x)^{-1}\\
=\alpha(g_0x)\chi_1(g_2g_1)\cdots\chi_{n-2}(g_{n-1}g_{n-2})
\end{multline*}
if $n$ is odd.
The results follow easily.
\end{proof}

\begin{corollary}[cf.~{\cite[Lemma~7.1]{Dolgachev}}]
Let $G$ be a connected smooth $k$-group scheme of finite type, and
$X$ a quasi-compact quasi-separated $G$-scheme such that $k[X]$ is reduced.
Then $H^n\alg(G,\O_X^\times)=0$ for $n\geq 2$.
In particular, $\rho:\Pic(G,X)\rightarrow \Pic(X)^G$ is surjective.
\end{corollary}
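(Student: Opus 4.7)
The plan is to reduce the corollary to Proposition~\ref{connected-cohomology.thm} by enlarging the base field from $k$ to the integral closure $K$ of $k$ in $k[X]$. First I would reduce to the case that $X$ is connected: since $X$ is quasi-compact, $\pi_0(X)$ is finite, and since $G$ is connected it preserves each component setwise, so the complex defining $H^\bullet_{\mathrm{alg}}(G,\O_X^\times)$ splits as a finite product over the components, and it suffices to treat each component separately.

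Assume $X$ is connected. A reduced connected scheme has no nontrivial global idempotents (such an idempotent decomposes $X$ into complementary closed-hence-open pieces), so the integral closure $K$ of $k$ in $k[X]$ inherits this property; since every finite $k$-subalgebra of the reduced integral extension $K/k$ is a finite product of fields, the lack of nontrivial idempotents forces $K$ itself to be a field (algebraic over $k$). The key step is then to show that $G$ acts trivially on $K$. I would deduce this from the integral-closure calculation in Step~3 of the proof of Theorem~\ref{main.thm}: because $k[G]$ is geometrically integral by \cite[(1.2)]{Borel} and $K$ is a field, that argument identifies $K\otimes 1$ with the entire integral closure of $k$ in $k[X]\otimes_k k[G]$. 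For any $a\in K$, the coaction $\omega(a)$ satisfies the minimal polynomial of $a$ over $k$, hence lies in $K\otimes 1$; applying the counit then forces $\omega(a)=a\otimes 1$.

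Consequently $X$ acquires the structure of a $K$-scheme, and the $G$-action refines to a $K$-action of $G_K:=G\times_k K$ on $X$. The group $G_K$ is smooth of finite type over $K$ by base change, and connected because smooth connected $k$-group schemes are geometrically integral; moreover $K$ is tautologically integrally closed in $K[X]=k[X]$. Thus Proposition~\ref{connected-cohomology.thm} applies to $(G_K,X)$ over $K$ and yields $H^n_{\mathrm{alg}}(G_K,\O_X^\times)=0$ for $n\geq 2$. Since $G_K^n\times_K X$ is identical to $G^n\times_k X$ as a scheme, the cochain complexes over $K$ and over $k$ coincide termwise, giving $H^n_{\mathrm{alg}}(G,\O_X^\times)=0$ for $n\geq 2$; the ``in particular'' assertion then follows at once from the five-term exact sequence of Proposition~\ref{five-term.thm}. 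The main obstacle is the triviality of the $G$-action on $K$, which crucially uses the geometric integrality of smooth connected $k$-group schemes to pin down the integral closure of $k$ in $k[X]\otimes_k k[G]$.
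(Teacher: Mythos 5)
Your proposal is correct and follows essentially the same route as the paper: reduce to connected $X$, pass to the integral closure $K$ of $k$ in $k[X]$ (a field), replace $k$ by $K$ and $G$ by $K\otimes_k G$, and invoke Proposition~\ref{connected-cohomology.thm}, with the ``in particular'' coming from Proposition~\ref{five-term.thm}. The only difference is that you explicitly verify that the coaction fixes $K$ (via the geometric integrality of $k[G]$ and the integral-closure computation from Step~3 of Theorem~\ref{main.thm}), a point the paper's one-line ``replacing $k$ by $K$'' leaves implicit; this is a welcome detail, not a divergence.
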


\begin{proof}
If $X$ is disconnected, then we can argue componentwise, and we may
assume that $X$ is connected.
Let $K$ be the integral closure of $k$ in $k[X]$.
Then $K$ is a field.
Replacing $k$ by $K$ and $G$ by $K\otimes_k G$, we may assume that 
$k$ is integrally closed in $k[X]$.
Now invoke Proposition~\ref{connected-cohomology.thm}.
\end{proof}

\section{Equivariant class group of a locally Krull scheme with
  a group action}

\paragraph
Let $R$ be an integral domain with $K=Q(R)$.
An $R$-module $M$ is a {\em lattice} or {\em $R$-lattice} if 
$M$ is torsion-free and
$M$ is isomorphic to 
an $R$-submodule of a finitely generated $R$-module.
By definition, a finitely generated torsion-free $R$-module is a lattice.
A submodule of a lattice is a lattice.
The direct sum of two lattices is a lattice.

\begin{lemma}
  Let $M$ be an $R$-module.
  Then the following are equivalent.
  \begin{enumerate}
  \item[\bf 1] $M$ is a lattice.
  \item[\bf 2] There is a finitely generated $R$-free module $F$ and
    an injective $R$-linear map $M\hookrightarrow F$ and $a\in R\setminus
    0$ such that $aF\subset M$.
  \end{enumerate}
\end{lemma}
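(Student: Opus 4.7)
The plan is direct in one direction and requires a projection trick in the other.

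The implication $(2)\Rightarrow(1)$ is immediate: a submodule of a finitely generated free $R$-module is automatically torsion-free and contained in a finitely generated $R$-module, hence a lattice by definition.

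For $(1)\Rightarrow(2)$, I would first arrange that $M$ sits inside a finitely generated free module. Since $M$ is torsion-free and embeds in some finitely generated $N$, dividing $N$ by its torsion submodule (which meets $M$ in zero) I may assume $N$ is finitely generated torsion-free of some rank $m$; then $N\hookrightarrow N\otimes_R K\cong K^m$, and clearing denominators for a finite set of generators yields $N\hookrightarrow R^m$, so $M\subset R^m$. The catch is that $d:=\rank M$ may be strictly less than $m$, in which case no nonzero $a$ can satisfy $aR^m\subset M$. So I would project: set $V=K\cdot M\subset K^m$, a subspace of dimension $d$, and fix any $K$-linear projection $p:K^m\to V$ with $p|_V=\id_V$. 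Then $p(R^m)$ is a finitely generated $R$-submodule of $V$ generated by $p(e_1),\ldots,p(e_m)$; since $p|_V=\id$, it contains $p(M)=M$ and has rank $d$. The $p(e_i)$ span $V$ over $K$, so some $d$ of them form a $K$-basis of $V$; taking their $R$-span produces a free rank-$d$ module $F\subset p(R^m)$.

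To finish, both $p(R^m)/F$ and $p(R^m)/M$ are finitely generated torsion $R$-modules---finitely generated because $p(R^m)$ is, and torsion because $F$ and $M$ already span $V$ over $K$. Hence there exist $s,t\in R\setminus 0$ with $s\cdot p(R^m)\subset F$ and $t\cdot p(R^m)\subset M$. The first inclusion gives $sM\subset F$, so multiplication by $s$ defines an injection $M\hookrightarrow F$; combining with $tF\subset M$ yields $stF\subset sM$, which is exactly $aF\subset M$ (inside $F$, under the chosen embedding) for $a:=st$.

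The main obstacle is the absence of a Noetherian hypothesis on $R$: submodules of finitely generated $R$-modules need not be finitely generated, and in particular $M$ itself need not be. The projection $p$ is what circumvents this, replacing $R^m$ (which has the wrong rank) by the rank-$d$ finitely generated module $p(R^m)$ sandwiching $M$, so that the quotients $p(R^m)/F$ and $p(R^m)/M$ are finitely generated torsion and can be killed by single elements $s$ and $t$ that assemble into the required $a$.
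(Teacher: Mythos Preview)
Your proof is correct. Both your argument and the paper's hinge on the same idea: trap $M$ between two free modules of the correct rank $d=\dim_K(K\otimes_R M)$ that differ by a scalar. The paper's execution is marginally more direct: it chooses the free module from \emph{inside} $M$ by picking $m_1,\ldots,m_d\in M$ that form a $K$-basis of $K\otimes_R M$, sets $F_0=\sum Rm_i$, extends this to a $K$-basis of $K\otimes_R N$ with span $G_0$, and then observes that $M\subset N\subset a^{-1}G_0$ together with $M\subset K\otimes_R F_0$ forces $M\subset a^{-1}G_0\cap(K\otimes_R F_0)=a^{-1}F_0=:F$, so that $aF=F_0\subset M\subset F$ with only one scalar needed. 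Your projection $p$ plays the same role as the paper's intersection with $K\otimes_R F_0$, but because you build $F$ from the projected standard basis vectors rather than from elements of $M$, you end up needing two annihilators $s,t$ and then composing them. The trade-off is negligible; both routes avoid any Noetherian hypothesis in the same way, by arranging that the relevant quotients are finitely generated torsion modules and hence bounded.
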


\begin{proof}
  {\bf 1$\Rightarrow$2}.
  By assumption, there is a finitely generated $R$-module $N$ and 
  an injection $M\rightarrow N$.
  Replacing $N$ by $N/N\tor$ if necessary,
  we may assume that $N$ is torsion-free, where $N\tor$ is the
  torsion part of $N$.
  Take $m_1,\ldots,m_r\in M$ which form a $K$-basis of $K\otimes_R M$.
  Take $n_{r+1},\ldots,n_s\in N$ such that 
  $m_1,\ldots,m_r,n_{r+1},\ldots,n_s$ is a $K$-basis of $K\otimes_R N$.
  Let $F_0$ and $G_0$ be $R$-spans of 
  $m_1,\ldots,m_r$ and $m_1,\ldots,m_r,n_{r+1},\ldots,n_s$, respectively.
  As $N$ is finitely generated, there exists some $a\in R\setminus 0$
  such that $N\subset a^{-1}G_0$.
  Then $F_0\subset M\subset N\cap M\subset a^{-1}G_0\cap(K\otimes_R F_0)
  =a^{-1}F_0$.
  Now set $F:=a^{-1}F_0$, and we are done.

  {\bf 2$\Rightarrow$1} is trivial.
\end{proof}

\begin{lemma}\label{basic2.thm}
  Let $M$ be an $R$-module.
  \begin{enumerate}
  \item[\bf 1] If $M$ is torsion-free (resp.\ a lattice) and 
    $R'$ a flat $R$-algebra which is a domain,
    then $M'=R'\otimes_R M$ is a torsion-free
    $R'$-module (resp. an $R'$-lattice).
  \item[\bf 2] Let $A_1,\ldots,A_r$ be $R$-algebras which are domains.
If $R\rightarrow \prod_i A_i$ is faithfully flat and each $A_i\otimes_R M$
is torsion-free as an $A_i$-module, then
    $M$ is torsion-free.
\item[\bf 3] Let $\Spec R=\bigcup_{i\in I}\Spec A_i$ be an affine open
covering, and assume that each $A_i\otimes_R M$ is a lattice.
Then $M$ is a lattice.
  \end{enumerate}
\end{lemma}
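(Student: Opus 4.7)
My plan is to take the three parts in order, using throughout the characterization established just above: $M$ is an $R$-lattice iff $M$ embeds in a finitely generated free $R$-module $F$ with $aF\subseteq M$ for some nonzero $a\in R$.

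Parts~1 and~2 are formal. For \textbf{Part 1}, torsion-freeness of $M$ means $M\hookrightarrow K\otimes_R M$, which base-changes flatly to $R'\otimes_R M\hookrightarrow (R'\otimes_R K)\otimes_K(K\otimes_R M)$. Since $R'$ is a flat $R$-algebra and $R$ a domain, elements of $R\setminus 0$ are nonzerodivisors in $R'$, so $R'\otimes_R K$ is the localization of $R'$ at $R\setminus 0$, a subring of $Q(R')$; hence the right-hand side is free over this domain and torsion-free as an $R'$-module. For the lattice part, flatness of $R'$ carries $M\hookrightarrow F$ (with $F$ finitely generated) to $R'\otimes_R M\hookrightarrow R'\otimes_R F$. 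For \textbf{Part 2}, each $A_i$ is a direct summand of the faithfully flat algebra $\prod_i A_i$, hence $R$-flat, so any $a\in R\setminus 0$ remains nonzero in $A_i$. If $am=0$, then $a(1\otimes m)=0$ in each $A_i\otimes_R M$; torsion-freeness there forces $1\otimes m=0$ in every $A_i\otimes_R M$, and faithful flatness of $\prod_i A_i$ then yields $m=0$.

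\textbf{Part 3} is the main content. First I refine the cover: every affine open of $\Spec R$ is a union of principal opens $D(f)$ with $f\in R$, and by Part~1 each $R[1/f]\otimes_R M$ remains a lattice. By quasi-compactness of $\Spec R$ I may assume $\Spec R=\bigcup_{i=1}^r D(f_i)$ with $(f_1,\ldots,f_r)R=R$. Part~2 gives $M$ torsion-free, so $M\hookrightarrow K\otimes_R M$; this $K$-space has finite dimension $n$ since each $A_i\otimes_R M$ is contained in a finitely generated $A_i$-module. Pick $m_1,\ldots,m_n\in M$ whose images form a $K$-basis, let $F_0=Rm_1+\cdots+Rm_n\subseteq M$, and identify $K\otimes_R M$ with $K^n$ so that $F_0=R^n$. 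For each $i$, $M[1/f_i]\subseteq K^n$ is contained in some finitely generated $A_i$-submodule $L_i\subseteq K^n$; choosing $A_i$-generators of $L_i$ in $K^n$ and clearing the denominators (lying in $R\setminus 0$) of their $K$-coordinates produces $d_i\in R\setminus 0$ with $d_i L_i\subseteq R[1/f_i]^n$, whence $d_i M[1/f_i]\subseteq R[1/f_i]^n$. Setting $g=d_1\cdots d_r\in R\setminus 0$ then gives $gM\subseteq\bigcap_i R[1/f_i]^n=R^n=F_0$, the identity $\bigcap_i R[1/f_i]=R$ being the sheaf axiom for $\O_{\Spec R}$ on the cover $\{D(f_i)\}$. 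Hence $M\subseteq g^{-1}F_0$, a finitely generated free $R$-module.

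The main obstacle is producing this single $g\in R$ from the local lattice bounds living in the various $A_i$. The two decisive moves are the reduction to a principal-open cover (so that clearing denominators in $K=Q(R)$ already gives elements of $R$, not of $A_i$) and the sheaf identity $\bigcap_i R[1/f_i]=R$, which descends the containment $gM\subseteq R[1/f_i]^n$ back to $gM\subseteq R^n$.
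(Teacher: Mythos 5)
Your argument is correct. Parts 1 and 2 follow the paper's proof almost verbatim (your Part 2 bypasses the fields of fractions $L_i$ and argues directly with the annihilating element, which is a cosmetic difference), but your Part 3 takes a genuinely different route. The paper does not refine the cover: it picks, for each $i$, finitely many elements of $K\otimes_R M$ whose $A_i$-span contains $A_i\otimes_R M$, lets $N$ be the $R$-span of all of them (implicitly after passing to a finite subcover), and observes that $V=(N+M)/N$ satisfies $A_i\otimes_R V=0$ for every $i$, hence $V=0$ because the $\Spec A_i$ cover $\Spec R$; thus $M\subseteq N$ with $N$ finitely generated. You instead shrink to a principal-open cover $\{D(f_i)\}$ (using Part 1 to keep the lattice property), clear denominators in $K=Q(R)$ to get a single $g\in R\setminus 0$ with $gM\subseteq R[1/f_i]^n$ for all $i$, and conclude via the sheaf identity $\bigcap_i R[1/f_i]=R$. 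The paper's version is shorter and avoids the reduction to principal opens, since "locally zero implies zero" does all the gluing at once; your version is more explicit and delivers the stronger sandwich $gM\subseteq F_0\subseteq M$ with $F_0$ free, i.e., the form of the lattice condition in the characterization lemma, directly. Both are valid; the only point worth making explicit in yours is that the nonempty principal opens $D(f)\subseteq\Spec A_i$ satisfy $R[1/f]=A_i[1/\bar f]$, so that Part 1 is applied over the base $A_i$ rather than over $R$.
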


\begin{proof}
  {\bf 1} If $M$ is torsion-free, then $M\rightarrow K\otimes_R M$ is
  injective.
  By flatness, $M'\rightarrow R'\otimes_R K\otimes_R M$ is injective.
  As $K\otimes_R M$ is a $K$-free module, $R'\otimes_R K\otimes_R M$ is
  an $R'\otimes_R K$-free module.
  Hence the localization $R'\otimes_R K\otimes_R M\rightarrow Q(R')\otimes_R M
  =Q(R')\otimes_{R'}M'$ is injective.
  Thus $M'$ is torsion-free.

  If $M$ is a lattice and $M\subset N$ with $N$ being $R$-finite, then
  $M'\subset N'$ with $N'$ being $R'$-finite, and $M'$ is an $R'$-lattice.

  {\bf 2}
Let $K$ and $L_i$ be the field of fractions of $R$ and $A_i$, respectively.
  Then the diagram
  \[
  \xymatrix{
    M \ar[r]^j \ar[d]^{\delta} & K\otimes_R M \ar[d]^{\Delta} \\
    \bigoplus_i A_i\otimes_R M \ar[r]^{r} &
    \bigoplus_i L_i\otimes_R M
  }
  \]
  is commutative.
  As a faithfully flat algebra is pure \cite[Theorem~7.5, (i)]{CRT},
  $\delta$ is injective.
  If each $A_i\otimes_R M$ is torsion-free, then
  $r$ is injective, and hence $j$ is injective, and
  $M$ is torsion-free.

{\bf 3}
There exist $m_{i1},\ldots,m_{ir_i}\in K\otimes_R M$ 
such that
  the $A_i$-span of $m_{i1},\ldots, m_{ir_i}$ contains $A_i\otimes_R M$.
  Let $N$ be the $R$-submodule spanned by the all $m_{ij}$.
  Set $V=(N+M)/N$.
  Then $A_i\otimes_R V=0$ for any $i$.
  As $\Spec R=\bigcup_i \Spec A_i$ is an open covering, 
  we have that $V=0$.
  Hence $N\supset M$, and $M$ is a lattice.
\end{proof}

For an $R$-module $M$, set $M\tf:=M/M\tor$, where $M\tor$ is the
torsion part of $M$.

\begin{lemma}\label{Hom-lattice.thm}
  Let $M$ be an $R$-module such that $M\tf$ is isomorphic to a submodule
  of a finitely generated module.
  Let $N$ be a lattice.
  Then $\Hom_R(M,N)$ is a lattice.
\end{lemma}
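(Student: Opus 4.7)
The plan is to reduce to the case where $M$ itself is a lattice, then use the sandwich $aF \subset M \subset F$ supplied by the previous lemma to show that the dual $\Hom_R(M,R)$ is a lattice, and finally assemble the result.

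First, since $N$ is torsion-free, every $\phi \in \Hom_R(M,N)$ annihilates $M\tor$, so $\Hom_R(M,N) = \Hom_R(M\tf,N)$. By hypothesis $M\tf$ is torsion-free and a submodule of a finitely generated module, hence a lattice. Thus, replacing $M$ by $M\tf$, I may assume $M$ is a lattice.

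Second, applying the previous lemma to both $M$ and $N$, I choose finitely generated free $R$-modules $F, G$ and nonzero $a, b \in R$ with $aF \subset M \subset F$ and $bG \subset N \subset G$. The key claim is that $\Hom_R(M,R)$ is a lattice. Restriction along the inclusion $aF \hookrightarrow M$ gives a map $r \colon \Hom_R(M,R) \to \Hom_R(aF,R)$; if $\phi \in \Ker r$, then for every $m \in M \subset F$ we have $am \in aF$, so $a\phi(m) = \phi(am) = 0$, and $\phi(m) = 0$ because $R$ is a domain. Hence $r$ is injective, and $\Hom_R(aF, R) \cong R^{\rank F}$ is finitely generated free. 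So $\Hom_R(M,R)$ embeds in a finitely generated free module and is a lattice (torsion-freeness is automatic since $R$ is a domain).

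Finally, $\Hom_R(M,G) \cong \Hom_R(M,R)^{\rank G}$ is a finite direct sum of lattices, hence a lattice. The inclusion $N \hookrightarrow G$ induces an injection $\Hom_R(M,N) \hookrightarrow \Hom_R(M,G)$, exhibiting $\Hom_R(M,N)$ as a submodule of a lattice, which is therefore a lattice. The only non-formal step is the sandwich dualization showing $\Hom_R(M,R)$ is a lattice; everything else is an immediate consequence of the facts that direct sums and submodules of lattices are lattices.
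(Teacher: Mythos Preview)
Your proof is correct and follows essentially the same route as the paper: reduce to $M$ a lattice, reduce $N$ to the free case, and then use the sandwich $aF\subset M\subset F$ to embed $M^*$ in a finite free module. Your injectivity argument for $r:\Hom_R(M,R)\to\Hom_R(aF,R)$ is the same as the paper's cokernel argument (the paper phrases it as dualizing the surjection-up-to-torsion $F\xrightarrow{a} M$, whose cokernel is $a$-torsion, hence has trivial dual); the only cosmetic difference is that the paper reduces $N$ to $R$ before treating $M^*$, whereas you do the two reductions in the opposite order.
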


\begin{proof}
  Replacing $M$ by $M\tf$, we may assume that $M$ is a lattice.
  Let $F$ be a finitely generated free $R$-module containing $N$.
  Then $\Hom_R(M,N)$ is a submodule of $\Hom_R(M,F)$.
  Replacing $N$ by $F$, we may assume that $N$ is finite free.
  As $\Hom_R(M,F)$ is a finite direct sum of $\Hom_R(M,R)$, we may assume
  that $N=R$.

  Take a finite free $R$-module $P$ and $a\in R\setminus 0$ such that
  $aP\subset M\subset P$.
  Then $a:P\rightarrow P$ induces a map $h:P\rightarrow M$ such that
  $C=\Coker h$ is annihilated by $a$.
  Then, dualizing, we get an injective map $M^*\rightarrow P^*$, 
  since $C^*=0$.
  Thus $M^*=\Hom_R(M,R)$ is a lattice, as desired.
\end{proof}

\begin{lemma}\label{tensor-lattice.thm}
  Let $M$ and $N$ be $R$-modules.
  Assume that $M\tf$ and $N\tf$ are lattices.
  Then $(M\otimes_R N)\tf$ is a lattice.
\end{lemma}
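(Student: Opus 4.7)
The plan is to reduce the statement to the case where $M$ and $N$ are themselves lattices, then exhibit $(M\otimes_R N)\tf$ as a submodule of a finitely generated free module.

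First I would replace $M$ by $M\tf$ and $N$ by $N\tf$. For this the key observation is that if $0\rightarrow M\tor\rightarrow M\rightarrow M\tf\rightarrow 0$ is the torsion sequence, then $M\tor\otimes_R N$ is torsion (every generator $m\otimes n$ is annihilated by the annihilator of $m$), so its image in $M\otimes_R N$ is torsion. A short diagram chase shows that the induced surjection $(M\otimes_R N)\tf\twoheadrightarrow (M\tf\otimes_R N)\tf$ is in fact an isomorphism, and the analogous argument reduces $N$ to $N\tf$. Hence we may assume that $M$ and $N$ are lattices.

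Next, apply the first lemma of the section to choose finitely generated free $R$-modules $F\supseteq M$ and $G\supseteq N$. I would consider the canonical map $\phi:M\otimes_R N\rightarrow F\otimes_R G$. Note that $F\otimes_R G$ is finitely generated free, hence torsion-free, so $\phi$ kills $(M\otimes_R N)\tor$ and factors through $(M\otimes_R N)\tf$. The whole point is to show that this induced map $(M\otimes_R N)\tf\rightarrow F\otimes_R G$ is injective, i.e.\ that $\Ker\phi\subseteq(M\otimes_R N)\tor$. Once this is established, $(M\otimes_R N)\tf$ is torsion-free and embeds in a finitely generated module, so it is a lattice by definition.

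The main technical step, and the only nontrivial point, is the injectivity claim. Since tensor product is not left exact, one cannot argue directly from $M\hookrightarrow F$ and $N\hookrightarrow G$. Instead I would tensor with $K=Q(R)$, using the identity $K\otimes_R(M\otimes_R N)\cong(K\otimes_R M)\otimes_K(K\otimes_R N)$ and the analogous identity for $F\otimes_R G$. The maps $K\otimes_R M\rightarrow K\otimes_R F$ and $K\otimes_R N\rightarrow K\otimes_R G$ are injective by flatness of $K$ over $R$, and tensoring injections of $K$-vector spaces over the field $K$ preserves injectivity, so the induced map $K\otimes_R(M\otimes_R N)\rightarrow K\otimes_R(F\otimes_R G)$ is injective. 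Via the commutative square relating $\phi$ to its $K$-extension, any element of $\Ker\phi$ must die in $K\otimes_R(M\otimes_R N)$, hence is torsion. This completes the plan.
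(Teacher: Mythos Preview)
Your proof is correct and follows essentially the same approach as the paper: reduce to the case where $M$ and $N$ are lattices, embed $M\otimes_R N$ into the tensor product of ambient finite free modules, and show the kernel of that map is torsion. The only cosmetic difference is in this last step: the paper picks $a,b\in R\setminus\{0\}$ with $aF\subset M$ and $bP\subset N$ and observes that the kernel dies after inverting $ab$, whereas you pass all the way to $Q(R)$ and use that tensoring injections over a field is injective---the same idea, just localized further.
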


\begin{proof}
  The images of $M\tor\otimes_R N$ and $M\otimes_R N\tor$ in $M\otimes_R N$
  are torsion modules.
  So replacing $M$ and $N$ by $M\tf$ and $N\tf$, we may assume that
  $M$ and $N$ are lattices.
  Take finite free $R$-modules $F$ and $P$ and $a,b\in R$ such that
  $aF\subset M\subset F$ and $bP\subset N\subset P$.
  Set $K$ to be the kernel of $M\otimes_R N\rightarrow F\otimes_R P$.
  Then $K_{ab}$ is zero.
  So $K$ is a torsion module, and hence $(M\otimes_R N)\tf=(M\otimes_R N)/K$ is
  a submodule of $F\otimes_R P$.
\end{proof}

\paragraph
We say that an $R$-module $M$ is reflexive (or divisorial) 
if $M$ is a lattice, and
the canonical map $M\rightarrow M^{**}$ is an isomorphism,
see \cite{Fossum}.

\begin{lemma}\label{lattice-flat.thm}
  Let $R$ be a Krull domain, $M$ an $R$-lattice, $F$ and $P$
  flat $R$-modules.
  Then the canonical map
  \[
  \Hom_R(M,P)\otimes_R F\rightarrow \Hom_R(M,P\otimes_R F)
  \]
  is an isomorphism.
\end{lemma}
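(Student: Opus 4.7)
My plan is to use the lattice structure of $M$ to reduce to the finitely-presented case, and then exploit the Krull hypothesis on $R$ to kill the residual torsion. Since $M$ is a lattice, I first choose a finite free $R$-module $F_0$ with $M\subset F_0$ and some nonzero $a\in R$ with $aF_0\subset M$. Writing $\alpha\colon M\hookrightarrow F_0$ for the inclusion and $\beta\colon F_0\to M$ for multiplication by $a$, one has $\alpha\beta=a\cdot\id_{F_0}$ and $\beta\alpha=a\cdot\id_M$. Applying $\Hom(-,N)$ for $N=P$ or $N=P\otimes F$ yields induced maps $\alpha^*,\beta^*$ between $\Hom(M,N)$ and $\Hom(F_0,N)$ with $\alpha^*\beta^*=\beta^*\alpha^*=a\cdot\id$.

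Next, I would use naturality of the canonical map $\theta_{(-)}\colon\Hom(-,P)\otimes F\to\Hom(-,P\otimes F)$ with respect to $\alpha$ and $\beta$ to get a commutative diagram relating $\theta_M$ to $\theta_{F_0}$. Since $F_0$ is finitely presented, $\theta_{F_0}$ is an isomorphism by the standard argument. A routine diagram chase using the relations $\alpha^*\beta^*=a\cdot\id$ then forces both $\ker\theta_M$ and $\coker\theta_M$ to be annihilated by $a$.

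For injectivity, flatness of $P$ over the domain $R$ makes $P$ torsion-free, whence $\Hom(M,P)$ is torsion-free (a map into a torsion-free module is killed by a nonzero scalar only if it is zero); tensoring with the flat $F$ preserves torsion-freeness, so the $a$-torsion submodule $\ker\theta_M$ of the torsion-free module $\Hom(M,P)\otimes F$ must vanish.

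Surjectivity is the main obstacle. Although $\coker\theta_M$ is $a$-torsion, it appears as a quotient rather than a submodule of the torsion-free $\Hom(M,P\otimes F)$, so torsion-freeness of the ambient does not immediately kill it. My plan is to use the Krull hypothesis essentially, by varying the choice of sandwich: for each height-one prime $\mathfrak{p}$ of $R$ the localisation $M_\mathfrak{p}$ is a lattice over the DVR $R_\mathfrak{p}$ and hence finite free, so one can arrange an embedding $M\hookrightarrow F_0'$ whose corresponding $a'$ lies outside $\mathfrak{p}$. Combined with the fact that an element of a Krull domain not contained in any height-one prime is a unit, the collection of such $a'$'s ought to generate the unit ideal of $R$, forcing $\coker\theta_M=0$. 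The most delicate point will be to handle the contribution of primes of height $\geq 2$ rigorously; I expect to do so by passing to the reflexive hull $M^{**}$, whose quotient $M^{**}/M$ over a Krull domain is supported in codimension $\geq 2$, thereby localising the residual problem to height one where the assertion is classical.
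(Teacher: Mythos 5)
Your injectivity argument is correct and complete: the sandwich $aF_0\subset M\subset F_0$ makes $\Ker\theta_M$ an $a$-torsion submodule of the torsion-free module $\Hom_R(M,P)\otimes_R F$, hence zero. But the surjectivity half, which you rightly identify as the crux, is not established, and the two ideas you propose for it do not close the gap. First, the claim that for each height-one prime $\mathfrak p$ you can rechoose the sandwich so that the multiplier $a'$ lies outside $\mathfrak p$ is itself unproved: a Krull domain need not be Noetherian, so a lattice $M$ need not be finitely generated, and from $(M/L)_{\mathfrak p}=0$ (for $L$ a finite free submodule with $L_{\mathfrak p}=M_{\mathfrak p}$) you cannot extract a single $a'\notin\mathfrak p$ with $a'M\subset L$ without further work. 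Second, and more seriously, even granting such an $a'$ for every height-one $\mathfrak p$, you would only learn that $\Coker\theta_M$ localizes to zero at every prime of height at most one; a module supported in codimension $\geq 2$ over a Krull domain need not vanish, and $\Coker\theta_M$ is a quotient, not a submodule, of the torsion-free target, so torsion-freeness of the ambient module says nothing about it. To convert ``zero in codimension one'' into ``zero'' you would need to know that $\Hom_R(M,P)\otimes_R F$ equals the intersection of its localizations at the height-one primes inside $K\otimes_R \Hom_R(M,P)\otimes_R F$, i.e.\ that $-\otimes_R F$ commutes with that infinite intersection --- which is essentially the statement being proved; passing to $M^{**}$ does not change this.

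The paper sidesteps the infinite intersection entirely: it first reduces to $P=R$, then uses the finite intersection $R=R[1/a]\cap\bigcap_{P\in\Min(Ra)}R_P$, over each factor of which $M$ becomes finite free (over $R[1/a]$ because $aF'\subset M\subset F'$ with $F'$ finite free, and over each $R_P$ because $R_P$ is a DVR). Since $\Hom_R(M,-)$ commutes with arbitrary intersections and $-\otimes_R F$ commutes with \emph{finite} intersections of submodules of a fixed module ($F$ being flat), the isomorphism follows by a chain of identifications inside $\Hom_K(K\otimes_R M, K\otimes_R F)$. That finiteness is the missing ingredient in your plan.
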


\begin{proof}
  It suffices to show that the two maps
  \[
  \Hom_R(M,R)\otimes_R(P\otimes_R F)\rightarrow
  \Hom_R(M,P\otimes_R F)
  \]
  and
  \[
  \Hom_R(M,R)\otimes_R P\rightarrow \Hom_R(M,P)
  \]
  are isomorphisms.
  So we may assume that $P=R$.

  Take a finitely generated $R$-free module $F'$ and $a\in R\setminus 0$
  such that $aF'\subset M\subset F'$.
  Let $\Cal P$ be the set of minimal primes of $Ra$.
  Then as submodules of $\Hom_K(K\otimes_R M,K\otimes_R F)$, 
  \begin{multline*}
    \Hom_R(M,R)\otimes_R F
    =
    \Hom_R(M,R[1/a]\cap\bigcap_{P\in\Cal P}R_P)\otimes_R F
    =\\
    (\Hom_R(M,R[1/a])\cap\bigcap_P\Hom_R(M,R_P))\otimes_R F
    =\\
    (\Hom_R(M,R[1/a])\otimes_R F)\cap\bigcap_P(\Hom_R(M,R_P)\otimes_R F)
    =\\
    \Hom_{R[1/a]}(R[1/a]\otimes_R M,R[1/a])\otimes_{R[1/a]}(R[1/a]\otimes_R
    F)\cap\\
    \bigcap_P(
    \Hom_{R_P}(M_P,R_P)\otimes_{R_P}F_P)
    =\\
    \Hom_{R[1/a]}(R[1/a]\otimes_R M,R[1/a]\otimes_R F)
    \cap
    \bigcap_P \Hom_{R_P}(M_P,F_P) 
    =\\
    \Hom_R(M,R[1/a]\otimes_R F)\cap
    \bigcap_P \Hom_R(M,R_P\otimes_R F)
    =\\
    \Hom_R(M,(R[1/a]\otimes_R F)\cap\bigcap_P (R_P\otimes_R F))
    =\\
    \Hom_R(M,(R[1/a]\cap\bigcap_P R_P)\otimes_R F)
    =\Hom_R(M,R\otimes_R F)=\Hom_R(M,F),
  \end{multline*}
  since $R[1/a]\otimes_R M$ and $M_P$ are finite free modules over 
  $R[1/a]$ and $R_P$, respectively.
\end{proof}

\begin{lemma}\label{Krull-descent.thm}
Let $\varphi:A\rightarrow B$ be a faithfully flat ring homomorphism,
and assume that $B$ is a finite direct product of \(Krull\) domains.
Then $A$ is a finite direct product of \(Krull\) domains.
\end{lemma}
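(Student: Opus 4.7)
The plan is to first reduce to the case where $A$ is a domain by analyzing its minimal primes, and then to verify the Krull property on each resulting factor. Since $\varphi:A\to B$ is faithfully flat it is injective, so $A$ inherits reducedness from $B=B_1\times\cdots\times B_n$. Writing $\fq_{(k)}:=\ker(B\to B_k)$, the minimal primes of $B$ are exactly $\fq_{(1)},\ldots,\fq_{(n)}$, and going-down for the flat map $A\to B$ forces every minimal prime of $A$ to be of the form $\fq_{(k)}\cap A$ for some $k$; in particular $A$ has only finitely many minimal primes $\fp_1,\ldots,\fp_r$.

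The key step is to show these $\fp_i$ are pairwise comaximal. Suppose for contradiction that some prime $\fp$ of $A$ contains two distinct minimal primes $\fp_i$ and $\fp_j$. By surjectivity of $\Spec B\to\Spec A$ lift $\fp$ to $\fq\in\Spec B$; then two applications of going-down (each followed by passing to a minimal prime beneath) produce distinct minimal primes $\fq_1,\fq_2\subset\fq$ of $B$ with $\fq_1\cap A=\fp_i$ and $\fq_2\cap A=\fp_j$. But in $B=\prod B_k$ any two distinct minimal primes are generated by complementary primitive idempotents and hence comaximal, giving $B=\fq_1+\fq_2\subset\fq$, a contradiction. Combined with $\bigcap_i\fp_i=\sqrt 0=0$, the Chinese Remainder Theorem then yields $A\cong\prod_{i=1}^r A/\fp_i$, a finite direct product of domains.

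To conclude I would show each factor $A_i:=A/\fp_i$ is Krull. The decomposition of $A$ lifts via orthogonal idempotents $e_i\in A$ to $B=\prod e_iB$, with each $e_iB$ faithfully flat over $A_i$ and still a finite direct product of Krull domains, so I may assume $A$ itself is a domain. Then every minimal prime of $B$ contracts to $(0)$, so each composition $A\hookrightarrow B_k$ is injective and each $B_k$ is a flat $A$-algebra (as a direct summand of the flat module $B$). Inside $Q(B)=\prod Q(B_k)$, purity of the faithfully flat inclusion $A\hookrightarrow B$ gives $A=Q(A)\cap B$, and since each $B_k$ is Krull this presents $A$ as an intersection of the DVRs $Q(A)\cap(B_k)_\fP$ as $\fP$ runs over the height-one primes of the $B_k$; the finite-character property for $A$ descends from the corresponding property in each $B_k$. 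I expect this final step---cleanly identifying $A$ as a Krull domain from its presentation as a finite-character intersection of DVRs in $Q(A)$---to be the main technical obstacle, and it is precisely where the Krull hypothesis on the $B_k$ genuinely enters.
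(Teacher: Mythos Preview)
Your proof is correct and follows essentially the same strategy as the paper's: reduce to the case where $A$ is a domain by showing its minimal primes are pairwise comaximal, then use purity of the faithfully flat extension to get $A=Q(A)\cap B$ and read off the Krull property. The only notable difference is in the comaximality step: the paper localizes at a maximal ideal $\mathfrak m\supset\fp_i+\fp_j$, lifts to a prime $M$ of $B$, and observes that $B_M$ (hence its subring $A_{\mathfrak m}$) is a domain, contradicting the existence of two minimal primes in $A_{\mathfrak m}$; your argument instead lifts $\fp_i,\fp_j$ via going-down to distinct minimal primes of $B$ below a common $\fq$ and uses their comaximality in the product ring. Both are short and equivalent in spirit. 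For the final step the paper simply writes ``the rest is easy'' after establishing $A=Q(A)\cap B$, whereas you spell out the intersection-of-DVRs and finite-character verification; your expanded version is correct and is indeed what the paper has in mind.
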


\begin{proof}
Assume that $B$ is a finite direct product of domains.
As $B$ has only finitely many minimal primes, $A$ has finitely many 
minimal primes $P_1,\ldots,P_r$.
If $i\neq j$, then $P_i+P_j=A$.
Indeed, if not, $P_i+P_j\subset\frak m$ for some maximal ideal $\frak m$ of $A$.
Then, there is a prime ideal $M$ of $B$ such that $M\cap A=\frak m$.
As $B_M$ is a domain and $A_{\frak m}$ is its subring, $A_{\frak m}$ is a domain.
But this contradicts the assumption that $P_iA_{\frak m}$ and $P_jA_{\frak m}$ 
are different minimal primes of $A_{\frak m}$.
Thus $A$ is a direct product of integral domains.

Now we assume that $B$ is a finite direct product of Krull domains.
Then $A$ is a finite direct product of domains.
By localizing, we may assume that $A$ is a domain.
If $b/a\in B\cap Q(A)$ with $a,b\in A$, then $b\in aB\cap A=aA$.
So $b/a\in A$, and we have that $B\cap Q(A)=A$ in $Q(B)$.
The rest is easy.
\end{proof}

\begin{lemma}\label{flat-reflexive.thm}
  Let $R$ be a Krull domain, and $M$ be an $R$-module.
If $M$ is reflexive and
    $R'$ is a flat $R$-algebra which is a domain, then $M\otimes_R R'$ is
    reflexive.
\end{lemma}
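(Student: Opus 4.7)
The plan is to produce the required biduality isomorphism by applying Lemma~\ref{lattice-flat.thm} twice, once to $M$ and once to its dual.

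First, $M\otimes_R R'$ is an $R'$-lattice by Lemma~\ref{basic2.thm}(\textbf{1}), so it makes sense to speak of its bidual over $R'$. Using Lemma~\ref{lattice-flat.thm} with $P=R$ and $F=R'$ (both flat over $R$), I get a natural isomorphism
\[
M^{*}\otimes_R R'\xrightarrow{\sim}\Hom_R(M,R\otimes_R R')=\Hom_R(M,R').
\]
Combining this with the standard hom-tensor adjunction $\Hom_R(M,R')\cong\Hom_{R'}(M\otimes_R R',R')$ (valid since $R'$ is an $R'$-module), I get a natural isomorphism
\[
\Psi_M\colon M^{*}\otimes_R R'\xrightarrow{\sim}(M\otimes_R R')^{*}.
\]

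Next, by Lemma~\ref{Hom-lattice.thm} applied to $M$ (a lattice) and $N=R$, the dual $M^{*}$ is itself an $R$-lattice. Hence I may apply the same construction to $M^{*}$ in place of $M$, yielding
\[
\Psi_{M^{*}}\colon M^{**}\otimes_R R'\xrightarrow{\sim}(M^{*}\otimes_R R')^{*}.
\]
Dualizing $\Psi_M$ and composing with $\Psi_{M^{*}}$ gives
\[
M^{**}\otimes_R R'\xrightarrow{\Psi_{M^{*}}}(M^{*}\otimes_R R')^{*}
\xrightarrow{(\Psi_M^{-1})^{*}}(M\otimes_R R')^{**}.
\]
Since $M$ is reflexive, $M^{**}\otimes_R R'=M\otimes_R R'$, and this composite is an isomorphism.

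What remains is to identify this composite with the canonical biduality map $\eta\colon M\otimes_R R'\to(M\otimes_R R')^{**}$. This is a naturality check: $\Psi_M$ is constructed entirely from the evaluation pairing $M\otimes_R M^{*}\to R$ together with base change, and a short diagram chase (comparing the evaluation $(M\otimes_R R')\otimes_{R'}(M\otimes_R R')^{*}\to R'$ with the base change of $M\otimes_R M^{*}\to R$) shows that the composite above equals $\eta$. This is the only subtle step, but it is routine given the explicit form of the Hom-tensor adjunction and of the isomorphism in Lemma~\ref{lattice-flat.thm}. The conclusion then follows at once: $\eta$ is an isomorphism, so $M\otimes_R R'$ is reflexive.
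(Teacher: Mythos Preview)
Your proof is correct and follows essentially the same route as the paper: both establish the chain of isomorphisms $M^{**}\otimes_R R'\cong (M^*\otimes_R R')^*\cong (M\otimes_R R')^{**}$ (using Lemma~\ref{lattice-flat.thm} and Hom--tensor adjunction) and then identify the composite with the canonical biduality map. The paper is terser---it does not name Lemma~\ref{lattice-flat.thm} or Lemma~\ref{Hom-lattice.thm} explicitly and outsources the naturality identification to \cite[Lemma~2.7]{HO}---but the argument is the same.
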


\begin{proof}
By Lemma~\ref{basic2.thm}, $M\otimes_R R'$ is a lattice.
  We have isomorphisms
  \begin{multline*}
    \Hom_R(\Hom_R(M,R),R)\otimes_R R'\cong
    \Hom_{R'}(\Hom_R(M,R)\otimes_R R',R')\cong\\
    \Hom_{R'}(\Hom_{R'}(M\otimes_R R',R'),R').
  \end{multline*}

  Let $\frak D:M\rightarrow M^{**}=\Hom_R(\Hom_R(M,R),R)$ 
  be the canonical map.
  Then
  \[
  M\otimes_R R'\xrightarrow{\frak D\otimes_R 1}
  M^{**}\otimes_R R'
  \]
  is an isomorphism if and only if
  \[
  \frak D: M\otimes_R R'\rightarrow 
  \Hom_{R'}(\Hom_{R'}(M\otimes_R R',R'),R')
  \]
  is an isomorphism, see \cite[Lemma~2.7]{HO}.
\end{proof}

\begin{lemma}[{\cite[Corollary~5.5]{Fossum}}]\label{intersection.thm}
  Let $R$ be a Krull domain with $K=Q(R)$, and $M$ an $R$-lattice.
  As submodules of $K\otimes_R M=\Hom_K(\Hom_K(K\otimes_R M,K),K)$, 
  we have $M^{**}=\bigcap_{P\in X^1(R)}M_P$, where
  $X^1(R)$ is the set of height one primes of $R$.
  In particular, the following are equivalent.
  \begin{enumerate}
  \item[\bf 1] $M$ is reflexive;
  \item[\bf 2] $M=\bigcap_{P\in X^1(R)}M_P$ in $K\otimes_R M$.
  \end{enumerate}
\end{lemma}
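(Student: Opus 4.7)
The plan is to prove the two inclusions $M^{**}\subset\bigcap_{P\in X^1(R)}M_P$ and $\bigcap_{P\in X^1(R)}M_P\subset M^{**}$ inside $K\otimes_R M$, using the fact that over a DVR every lattice is finite free, together with the defining Krull property $R=\bigcap_{P\in X^1(R)}R_P$.

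First I would reduce the local picture. Since $M$ is a lattice, there is a finitely generated free $R$-module $F$ and $a\in R\setminus 0$ with $aF\subset M\subset F$. Localizing at any $P\in X^1(R)$ gives $aF_P\subset M_P\subset F_P$; as $R_P$ is a DVR and $F_P$ is finite free, $M_P$ is a finitely generated torsion-free $R_P$-module, hence finite free. In particular $M_P$ is reflexive over $R_P$, so $(M_P)^{**}=M_P$. By the flat base change isomorphism of Lemma~\ref{lattice-flat.thm}, applied to the lattice $M$ and the flat $R$-algebra $R_P$, we obtain $(M^{**})_P\cong (M_P)^{**}=M_P$. By Lemma~\ref{Hom-lattice.thm}, $M^{**}$ is itself a lattice, hence embeds in its localization; therefore $M^{**}\subset (M^{**})_P=M_P$ for every $P\in X^1(R)$, giving the first inclusion.

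For the reverse inclusion I would argue by evaluation. Identify $K\otimes_R M$ with $\Hom_K(\Hom_K(K\otimes_R M,K),K)$, and note that an element $x\in K\otimes_R M$ lies in $M^{**}=\Hom_R(\Hom_R(M,R),R)$ if and only if, for every $\varphi\in M^*=\Hom_R(M,R)$, the extension $\varphi_K\colon K\otimes_R M\to K$ satisfies $\varphi_K(x)\in R$. Take $n\in\bigcap_{P\in X^1(R)}M_P$ and $\varphi\in M^*$. For each $P\in X^1(R)$ the localization $\varphi_P\colon M_P\to R_P$ agrees with the restriction of $\varphi_K$, so $\varphi_K(n)=\varphi_P(n)\in R_P$. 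Hence $\varphi_K(n)\in\bigcap_{P\in X^1(R)}R_P=R$, the last equality being the defining property of the Krull domain $R$. Thus $n\in M^{**}$.

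Combining the two inclusions yields $M^{**}=\bigcap_{P\in X^1(R)}M_P$. The equivalence of \textbf{1} and \textbf{2} is then immediate from the fact that the canonical map $M\to M^{**}$ is an injection of $R$-lattices inside $K\otimes_R M$, so it is an isomorphism exactly when $M=M^{**}=\bigcap_{P\in X^1(R)}M_P$. There is no serious obstacle in this argument; the only point deserving care is verifying that $M_P$ is finite free over the DVR $R_P$ (to legitimate applying Lemma~\ref{lattice-flat.thm} and concluding $(M_P)^{**}=M_P$), and this is handled by sandwiching $M_P$ between the two finite free modules $aF_P$ and $F_P$.
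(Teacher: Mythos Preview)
Your argument is correct and uses the same ingredients as the paper's proof: the Krull identity $R=\bigcap_{P\in X^1(R)}R_P$, flat base change for $\Hom$ of lattices (Lemma~\ref{lattice-flat.thm}), and the fact that a lattice over a DVR is finite free, hence reflexive. The only difference is packaging: where you prove two inclusions separately, the paper writes a single chain of equalities
\[
M^{**}=\Hom_R\!\bigl(M^*,\textstyle\bigcap_P R_P\bigr)=\bigcap_P\Hom_R(M^*,R_P)=\bigcap_P\Hom_{R_P}\!\bigl((M_P)^*,R_P\bigr)=\bigcap_P M_P,
\]
which simultaneously encodes your evaluation argument for one inclusion and your base-change computation $(M^{**})_P=(M_P)^{**}=M_P$ for the other.
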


\begin{proof}
  As submodules of $K\otimes_R M=\Hom_K(\Hom_K(K\otimes_R M,K),K)$, 
  \begin{multline*}
    \Hom_R(\Hom_R(M,R),R)=
    \Hom_R(\Hom_R(M,R),\bigcap_P R_P)
    =\\
    \bigcap_P \Hom_{R}(\Hom_R(M,R),R_P)
    =
    \bigcap_P \Hom_{R_P}(\Hom_R(M,R)_P,R_P)
    =\\
    \bigcap_P \Hom_{R_P}(\Hom_{R_P}(M_P,R_P),R_P)
    =
    \bigcap_P M_P.
  \end{multline*}
  The assertions follow.
\end{proof}

\begin{corollary}
Let $R$ be a Krull domain, and 
\[
0\rightarrow L\rightarrow M\rightarrow N
\]
be an exact sequence of $R$-lattices.
Then
\[
0\rightarrow L^{**}\rightarrow M^{**}\rightarrow N^{**}
\]
is also exact.
\end{corollary}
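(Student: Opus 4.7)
The plan is to combine the intersection formula of Lemma~\ref{intersection.thm} with the exactness of localization. Set $K=Q(R)$. For each lattice $X\in\{L,M,N\}$, the formula identifies $X^{**}$ with $\bigcap_{P\in X^1(R)}X_P$ viewed inside $K\otimes_R X$; since lattices are torsion-free, the canonical maps $X_P\hookrightarrow K\otimes_R X$ are indeed injective. Because $K$ and each $R_P$ are flat over $R$, the two sequences
\[
0\to K\otimes_R L\to K\otimes_R M\to K\otimes_R N
\qquad\text{and}\qquad
0\to L_P\to M_P\to N_P
\]
inherit exactness from the given sequence.

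Next I would identify $K\otimes_R L$ with its image in $K\otimes_R M$ so that $L_P\subseteq M_P$ as subsets of $K\otimes_R M$ for every $P\in X^1(R)$. Taking intersections over $P$ yields a well-defined inclusion $L^{**}=\bigcap_P L_P\hookrightarrow\bigcap_P M_P=M^{**}$, which is automatically injective. The composite $L^{**}\to M^{**}\to N^{**}$ vanishes, since already $K\otimes_R L\to K\otimes_R N$ is the zero map.

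For exactness at $M^{**}$, suppose $x\in M^{**}$ maps to $0$ in $N^{**}$. Viewing $x$ inside $K\otimes_R M$, the left exactness of $K\otimes_R -$ places $x$ in $K\otimes_R L$; and for each height-one prime $P$, the exactness of $0\to L_P\to M_P\to N_P$ forces $x\in L_P$, since its image in $N_P$ vanishes. Intersecting over $P$ gives $x\in\bigcap_P L_P=L^{**}$, as required.

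I do not anticipate a serious obstacle: once the intersection formula is in hand, the entire argument reduces to left exactness of two flat base changes (by $K$ and by each $R_P$), both of which are immediate. The only point that warrants care is keeping the subspace identifications $L_P\subset K\otimes_R L\subset K\otimes_R M$ consistent, so that the intersections are genuinely taken inside a common ambient $K$-vector space.
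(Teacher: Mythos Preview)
Your proposal is correct and follows essentially the same approach as the paper: both identify $X^{**}$ with $\bigcap_{P\in X^1(R)}X_P$ via Lemma~\ref{intersection.thm} and then use the (left) exactness of localization at each height-one prime. The paper simply records the resulting exact sequence of intersections without comment, while you spell out the diagram chase at $M^{**}$ and the care needed with the ambient identifications; there is no genuine difference in strategy.
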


\begin{proof}
This is because
\[
0\rightarrow
\bigcap_{P\in X^1(R)}L_P
\rightarrow
\bigcap_{P\in X^1(R)}M_P
\rightarrow
\bigcap_{P\in X^1(R)}N_P
\]
is exact.
\end{proof}

\begin{corollary}\label{second-syzygy.thm}
Let $R$ be a Krull domain, and
\[
0\rightarrow L\rightarrow M\rightarrow N
\]
be an exact sequence of $R$-modules.
If $M$ is reflexive and $N$ is torsion-free, then $L$ is reflexive.
\end{corollary}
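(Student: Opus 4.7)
The plan is to apply the intersection characterization of reflexive modules from Lemma~\ref{intersection.thm} directly.

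First I would check that $L$ is a lattice: since $L$ is a submodule of the lattice $M$, it is automatically a lattice. Therefore $L^{**}$ makes sense and, by Lemma~\ref{intersection.thm}, equals $\bigcap_{P\in X^1(R)}L_P$ inside $K\otimes_R L$, where $K=Q(R)$. Because both $L$ and $M$ are torsion-free, the inclusion $L\hookrightarrow M$ induces $K\otimes_R L\hookrightarrow K\otimes_R M$, and I will view everything inside $K\otimes_R M$.

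Next, using exactness of localization, for each height-one prime $P$ we have the exact sequence $0\to L_P\to M_P\to N_P$, so $L_P=\ker(M_P\to N_P)\subseteq M_P$. Intersecting over $P\in X^1(R)$ gives
\[
L^{**}=\bigcap_{P\in X^1(R)}L_P\;\subseteq\;\bigcap_{P\in X^1(R)}M_P=M^{**}=M,
\]
where the last equality uses the reflexivity of $M$ and Lemma~\ref{intersection.thm} again. Thus every $x\in L^{**}$ already lies in $M$; it remains to show its image $\bar x\in N$ is zero.

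For each $P\in X^1(R)$ the containment $x\in L_P$ forces $\bar x=0$ in $N_P$. Fix any one such $P$: since $R_P$ is a DVR with field of fractions $K$, we have $K\otimes_R N=K\otimes_{R_P}N_P$, so $\bar x=0$ in $N_P$ implies $\bar x=0$ in $K\otimes_R N$. The torsion-freeness of $N$ gives the injection $N\hookrightarrow K\otimes_R N$, hence $\bar x=0$ in $N$. Consequently $x\in\ker(M\to N)=L$, proving $L^{**}\subseteq L$ and therefore $L=L^{**}$.

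There is no serious obstacle here; the proof is essentially a formal manipulation once one sets up the comparison inside $K\otimes_R M$. The only delicate point is the final step, where one must be careful to use the torsion-freeness of $N$ (rather than any reflexivity hypothesis) to promote vanishing of $\bar x$ in a single localization $N_P$ to vanishing of $\bar x$ in $N$ itself.
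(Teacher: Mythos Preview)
Your argument is correct and follows essentially the same route as the paper: both observe that $L$ is a lattice as a submodule of $M$, then use the intersection characterization of Lemma~\ref{intersection.thm} together with a diagram chase (the paper phrases this as ``apply the five lemma'' to the obvious commutative diagram comparing $0\to L\to M\to N$ with $0\to\bigcap_P L_P\to\bigcap_P M_P\to\bigcap_P N_P$). The only cosmetic difference is that you invoke torsion-freeness of $N$ via the injection $N\hookrightarrow K\otimes_R N$ factored through a single $N_P$, whereas the paper (implicitly) uses injectivity of $N\to\bigcap_P N_P$; note your phrasing ``fix any one such $P$'' tacitly assumes $X^1(R)\neq\emptyset$, but the excluded case $R=K$ is trivial.
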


\begin{proof}
Being a submodule of the lattice $M$, we have that $L$ is a lattice.
Now apply the five lemma to the diagram
\[
\xymatrix{
0 \ar[r] &
\bigcap_{P\in X^1(R)}L_P
\ar[r] &
\bigcap_{P\in X^1(R)}M_P
\ar[r] &
\bigcap_{P\in X^1(R)}N_P\\
0 \ar[r] &
L \ar[r] \ar[u] &
M \ar[r] \ar[u] &
N \ar[u]
}.
\]
\end{proof}

\begin{lemma}\label{height-one.thm}
Let $R$ be an integral domain.
Let $R'$ be a faithfully flat $R$-algebra which is also a finite
direct product of Krull domains.
If $\frak p$ is a height-one prime ideal of $R$,
then there exists some height-one prime ideal $P$ of $R'$ such that
$P\cap R=\frak p$.
\end{lemma}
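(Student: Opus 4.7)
The plan is to invoke Lemma~\ref{Krull-descent.thm} so that $R$ itself becomes a Krull domain, decompose $R' = A_1 \times \cdots \times A_r$ into its Krull-domain factors, and then, inside the factor where a prime lying over $\mathfrak{p}$ happens to sit, pick a suitable height-one prime by exploiting the fundamental fact that in a Krull domain every minimal prime over a nonzero principal ideal has height one.

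First I would observe that, by Lemma~\ref{Krull-descent.thm} applied to $R \to R'$, $R$ is a finite direct product of Krull domains; since $R$ is an integral domain, $R$ must itself be a single Krull domain. Write $R' = A_1 \times \cdots \times A_r$ with each $A_i$ a Krull domain. Each projection $\varphi_i : R \to A_i$ sends $1$ to $1$, and as a unital ring map between domains it is injective.

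Next, faithful flatness of $R \to R'$ makes $\Spec R' \to \Spec R$ surjective, so by lying-over there is some prime $Q$ of $R'$ with $Q \cap R = \mathfrak{p}$. Because primes of a finite product are concentrated in a single factor, $Q = A_1 \times \cdots \times \mathfrak{q}_i \times \cdots \times A_r$ for some index $i$ and some prime $\mathfrak{q}_i$ of $A_i$, and then $\varphi_i^{-1}(\mathfrak{q}_i) = \mathfrak{p}$. Pick any $0 \neq a \in \mathfrak{p}$; by injectivity of $\varphi_i$ its image in $A_i$ is a nonzero element of $\mathfrak{q}_i$. The key Krull-domain input is that every minimal prime over a nonzero principal ideal has height one, so we may choose a height-one prime $\mathfrak{p}' \subseteq \mathfrak{q}_i$ of $A_i$ that is minimal over $(a)$. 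Then $\varphi_i^{-1}(\mathfrak{p}')$ is a nonzero prime of $R$ (it contains $a$) contained in the height-one prime $\mathfrak{p}$, so, $R$ being a domain, it must equal $\mathfrak{p}$. Consequently the prime $P := A_1 \times \cdots \times \mathfrak{p}' \times \cdots \times A_r$ of $R'$ is of height one and satisfies $P \cap R = \mathfrak{p}$.

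There is no deep obstacle: the argument is essentially an assembly of faithful flatness (for lying-over and for injectivity of $R \to R'$), the structure of primes in a finite product, the domain hypothesis on $R$, and the foundational fact about height-one primes in Krull domains. The only point requiring a bit of care is to pick the height-one prime \emph{inside} $\mathfrak{q}_i$, so that the contracted prime is sandwiched between a nonzero prime and the height-one prime $\mathfrak{p}$ and is therefore forced to equal $\mathfrak{p}$.
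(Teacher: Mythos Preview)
Your argument is essentially correct, but one justification is wrong: a unital ring map between domains need \emph{not} be injective (consider $\mathbb{Z}\to\mathbb{Z}/p\mathbb{Z}$). The correct reason each $\varphi_i:R\to A_i$ is injective is that $A_i$, being a direct summand of the flat $R$-module $R'$, is itself $R$-flat; a nonzero flat algebra over a domain is torsion-free, so $\varphi_i(r)=r\cdot 1_{A_i}=0$ forces $r=0$. With this repaired, your sandwich argument ($0\neq\varphi_i^{-1}(\mathfrak p')\subseteq\mathfrak p$ with $\height\mathfrak p=1$) is valid.

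The paper uses the same core Krull-domain fact (minimal primes over a nonzero principal ideal have height one) but packages it more economically: it first localizes at $\mathfrak p$, so that $R$ becomes a DVR with uniformizer $\pi$, and then simply takes any minimal prime $P$ of $\pi R'$. This bypasses your preliminary lying-over step and the factor-by-factor bookkeeping, since once $R$ is local of dimension one the contraction $P\cap R$ can only be $\mathfrak p$. Your route is slightly longer but has the mild advantage of staying global and making the role of the individual Krull factors explicit.
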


\begin{proof}
$R$ is a Krull domain by Lemma~\ref{Krull-descent.thm}.
By localizing, we may assume that $R$ is a DVR.
Let $\pi$ be a generator of the maximal ideal $\frak p$ of $R$.
As $\pi R'\neq R'$ by the faithful flatness, there exists some minimal
prime $P$ of $\pi R'$.
Then $P$ is of height one, since $R'$ is a finite direct product of
Krull domains.
The assertion follows.
\end{proof}

\begin{lemma}\label{reflexive-descent.thm}
Let $R$ be an integral domain, and $M$ an $R$-module.
Let $A_1,\ldots,A_r$ be $R$-algebras which are Krull domains 
such that
$R'=\prod_{i=1}^r A_i$ is a faithfully flat $R$-algebra.
If each $A_i\otimes_R M$ is a lattice \(resp.\ reflexive\), then 
$M$ is a lattice \(resp.\  reflexive\).
\end{lemma}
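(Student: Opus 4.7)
The plan rests on a common setup followed by the two separate assertions. Since $R'=\prod_i A_i$ is a faithfully flat finite direct product of Krull domains, Lemma~\ref{Krull-descent.thm} forces $R$ itself to be a Krull domain. Each $A_i\otimes_R M$ is torsion-free (being a lattice), and Lemma~\ref{basic2.thm} part~2 applied to the faithfully flat $R\to R'$ then gives that $M$ is torsion-free. Write $K=Q(R)$, $K_i=Q(A_i)$, $V=K\otimes_R M$, and $V_i=V\otimes_K K_i$. Each lattice $A_i\otimes_R M$ has finite rank, so $K_i\otimes_R M\cong V\otimes_K K_i$ is finite dimensional over $K_i$; hence $V$ has finite dimension $n$ over $K$.

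For the reflexive assertion, by Lemma~\ref{intersection.thm} it suffices to prove $M=\bigcap_{P\in X^1(R)}M_P$ inside $V$. Take $v$ in this intersection. For any height-one prime $Q$ of some $A_i$ set $P=Q\cap R$; by going-down for the flat morphism $R\to A_i$ the height of $P$ is at most one. If $\operatorname{ht}P=1$, then $R_P\to(A_i)_Q$ is faithfully flat and $v\in M_P$ embeds into $(A_i\otimes_R M)_Q$. If $P=(0)$, then $Q$ is disjoint from the image of $R\setminus 0$ in $A_i$, so $(A_i\otimes_R M)_Q$ is a localization of $K\otimes_R(A_i\otimes_R M)$, which contains $1\otimes v$. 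Either way $1\otimes v\in(A_i\otimes_R M)_Q$, so by reflexivity of $A_i\otimes_R M$ and Lemma~\ref{intersection.thm}, $1\otimes v\in A_i\otimes_R M$. This holds for each $i$, hence $1\otimes v\in R'\otimes_R M$, and purity of $R\to R'$ applied to $0\to M\to V\to V/M\to 0$ then forces $v\in M$.

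For the lattice assertion, I would adapt the argument of Lemma~\ref{basic2.thm} part~3 to faithfully flat descent. Choose $e_1,\dots,e_n\in M$ mapping to a $K$-basis of $V$, put $F_0=Re_1+\cdots+Re_n\subset M$, and for each $i$ fix a finitely generated free $A_i$-module $F_i\subset V_i$ with $A_i\otimes_R M\subset F_i$. The crux is to assemble a finitely generated $R$-submodule $N\subset V$ containing $F_0$ such that $A_i\otimes_R N\supset A_i\otimes_R M$ inside $A_i\otimes_R V$ for every $i$: granting this, $A_i\otimes_R((M+N)/N)=0$ for each $i$, hence $R'\otimes_R((M+N)/N)=0$, and faithful flatness forces $M\subset N$, exhibiting $M$ as a lattice.

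The main obstacle is producing such an $N$. In the open-cover setting of Lemma~\ref{basic2.thm} part~3 this is immediate because each $A_i$ embeds in $K$ and so $V_i=V$; here the generators of $F_i$ live a priori in $V_i=V\otimes_K K_i$, and the extraneous $K_i$-denominators in their expansions along $e_1,\dots,e_n$ must be absorbed by enlarging $N$. An alternative route that bypasses this is to first form the $R$-module $\widetilde M:=\{v\in V\mid 1\otimes v\in(A_i\otimes_R M)^{**}\text{ inside }V_i\text{ for every }i\}\supset M$, verify that $A_i\otimes_R\widetilde M$ coincides with $(A_i\otimes_R M)^{**}$ (so is reflexive), apply the reflexive case already established to deduce $\widetilde M$ is reflexive, and conclude that $M$ is a lattice as a submodule of the lattice $\widetilde M$.
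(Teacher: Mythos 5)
Your reduction to $R$ Krull, the torsion-freeness of $M$, and the finite-dimensionality of $V=K\otimes_R M$ all match the paper, and your argument for the reflexive assertion (localize at height-one primes $Q$ of $A_i$, compare with $P=Q\cap R$, use $A_i\otimes_R M=\bigcap_Q(A_i\otimes_R M)_Q$, then purity of $R\rightarrow R'$) is correct \emph{provided the lattice assertion is already in hand}, since Lemma~\ref{intersection.thm} characterizes reflexivity by $M=\bigcap_{P\in X^1(R)}M_P$ only for lattices. (The paper's own reflexive step is shorter: $\frak D\otimes 1_{A_i}\colon M\otimes_R A_i\rightarrow M^{**}\otimes_R A_i$ is identified with the biduality map of $M\otimes_R A_i$ via Lemma~\ref{lattice-flat.thm}, hence is an isomorphism, and faithfully flat descent of isomorphisms finishes.)

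The genuine gap is the lattice assertion, and you have located it yourself: the generators of the free $A_i$-module $F_i$ bounding $A_i\otimes_R M$ have coordinates in $K_i=Q(A_i)$ rather than in $K$, so their denominators cannot be cleared by an element of $R$ without a further idea, and the argument of Lemma~\ref{basic2.thm}, part {\bf 3} does not carry over. Your proposed workaround via $\widetilde M=\{v\in V\mid 1\otimes v\in(A_i\otimes_R M)^{**}\text{ for all }i\}$ does not close it: the key claim that $A_i\otimes_R\widetilde M$ equals $(A_i\otimes_R M)^{**}$ --- in particular that $(A_i\otimes_R M)^{**}$ is generated over $A_i$ by elements coming from $V$ --- is exactly as hard as the original boundedness problem and is left unproved; and ``apply the reflexive case to deduce $\widetilde M$ is reflexive'' is circular, because reflexive in this paper means lattice plus biduality, while your reflexive argument only yields $\widetilde M=\bigcap_P\widetilde M_P$, not that $\widetilde M$ is a lattice. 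The paper's resolution is the missing idea: fix a finite free $F\subset V$ with $K\otimes_R F=V$ and a nonzerodivisor $a\in R'$ with $R'\otimes_R M\subset a^{-1}(R'\otimes_R F)$; then, using Lemma~\ref{height-one.thm} to lift each height-one prime $\frak p$ of $R$ to a height-one prime $P(\frak p)$ of $R'$, one gets
\[
M\subset\bigl(a^{-1}R'_{P(\frak p)}\cap K\bigr)\otimes_R F\subset(\frak pR_{\frak p})^{-\ru{v_{\frak p}(a)/n_{\frak p}}}\otimes_R F,
\]
where $v_{\frak p}$ is the valuation at $P(\frak p)$ and $n_{\frak p}$ the ramification index; since $v_{\frak p}(a)=0$ for all but finitely many $\frak p$, a single $b\in R\setminus 0$ with $v_{\frak p}(b)\geq v_{\frak p}(a)$ for all $\frak p$ gives $M\subset\bigcap_{\frak p}b^{-1}(R_{\frak p}\otimes_R F)=b^{-1}F$. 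Some such valuation-theoretic descent of boundedness is needed to complete your proof.
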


\begin{proof}
Note that $R$ is a Krull domain by Lemma~\ref{Krull-descent.thm}.

Assume that each $A_i\otimes_R M$ is a lattice.
Then $M$ is torsion-free by Lemma~\ref{basic2.thm}.
Obviously, $K\otimes_R M$ is a finite dimensional $K$-vector space.

Let $F$ be any finite free $R$-submodule of $K\otimes_R M$ such that 
$K\otimes_R F=K\otimes_R M$.
Set $R'=\prod_i A_i$.
Then in $Q(R')\otimes_{R}M$, there exists some nonzerodivisor $a$ of $R'$
such that $R'\otimes_R M \subset a^{-1}(R'\otimes_R F)$.
Let $P_1,\ldots,P_s$ be the complete list of height one primes of $R'$ such 
that $a\in P_i$.
Set $\frak p_i:=P_i\cap R$.
For each height one prime $\frak p$ of $R$, choose height one prime ideal
$P(\frak p)$ of $R'$ such that $P(\frak p)\cap R=\frak p$ (we can do so
by Lemma~\ref{height-one.thm}).
Let $v_{\frak p}$ be the normalized discrete valuation of $Q(R'_{P(\frak p)})$
corresponding to $R'_{P(\frak p)}$,
and $n_{\frak p}$ be the ramification index.
That is, $\frak pR'_{P(\frak p)}=(P(\frak p)R'_{P(\frak p)})^{n_{\frak p}}$.

Take $b\in R\setminus 0$ such that $v_{\frak p}(b)\geq
v_{\frak p}(a)$ for any $\frak p$.
This is possible, since $v_{\frak p}(a)=0$ unless $\frak p=\frak p_i$ for
some $i$.
Then for any $\frak p$,
\begin{multline*}
M\subset (R'_{P(\frak p)}\otimes_R M)\cap (K\otimes_R M)
\subset a^{-1}(R'_{P(\frak p)}\otimes_R F)\cap (K\otimes_R F)
=\\
(a^{-1}R'_{P(\frak p)}\cap K)\otimes_R F
\subset (\frak p R_{\frak p})^{-\ru{v_{\frak p}(a)/n_{\frak p}}}\otimes_R F
\subset b^{-1}R_{\frak p}\otimes_R F.
\end{multline*}
Thus
\[
M\subset \bigcap_{\frak p}b^{-1}(R_{\frak p}\otimes_R F)=b^{-1}F.
\]
This shows that $M$ is a lattice.

Next assume that $A_i\otimes_R M$ is reflexive for any $i$.
Then
$\frak D\otimes 1_{A_i}:M\otimes_R A_i\rightarrow M^{**}\otimes_R
  A_i$ is an isomorphism for any $i$.
  So $\frak D:M\rightarrow M^{**}$ is an isomorphism.
\end{proof}

\begin{lemma}\label{Krull-Hom.thm}
  Let $R$ be a Krull domain, $M$ an $R$-lattice, $N$ a reflexive 
  $R$-module, and $F$ and $P$
  flat $R$-modules.
  Then the canonical map
  \[
  \Hom_R(M,N\otimes_R P)\otimes_R F\rightarrow \Hom_R(M,N\otimes_R P\otimes_R F)
  \]
  is an isomorphism.
\end{lemma}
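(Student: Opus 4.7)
The plan is to reduce to the situation covered by Lemma~\ref{lattice-flat.thm} by replacing $N\otimes_R P$ with a Hom-expression using the reflexivity of $N$, and then applying the Hom-tensor adjunction.

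First I would use that $N$ is reflexive, so $N\cong N^{**}=\Hom_R(N^*,R)$, and note that $N^*=\Hom_R(N,R)$ is itself an $R$-lattice by Lemma~\ref{Hom-lattice.thm}. Applying Lemma~\ref{lattice-flat.thm} to the lattice $N^*$ with the flat modules $R$ and $P$ (and again with $R$ and $P\otimes_R F$), I would get canonical isomorphisms
\[
N\otimes_R P\;\cong\;\Hom_R(N^*,P),\qquad N\otimes_R P\otimes_R F\;\cong\;\Hom_R(N^*,P\otimes_R F).
\]
These isomorphisms are natural, so they identify the canonical map in the statement with the canonical map
\[
\Hom_R(M,\Hom_R(N^*,P))\otimes_R F\longrightarrow \Hom_R(M,\Hom_R(N^*,P\otimes_R F)).
\]

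Next I would apply the tensor-Hom adjunction to rewrite this as
\[
\Hom_R(M\otimes_R N^*,P)\otimes_R F\longrightarrow \Hom_R(M\otimes_R N^*,P\otimes_R F),
\]
and then observe that since $P$ and $P\otimes_R F$ are flat, hence torsion-free over the domain $R$, every $R$-linear map out of $M\otimes_R N^*$ kills its torsion part. Thus I may replace $M\otimes_R N^*$ by its torsion-free quotient $(M\otimes_R N^*)\tf$, which is an $R$-lattice by Lemma~\ref{tensor-lattice.thm} (using that both $M$ and $N^*$ are lattices).

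Finally I would apply Lemma~\ref{lattice-flat.thm} once more, this time to the lattice $(M\otimes_R N^*)\tf$ with flat modules $P$ and $F$, to conclude that the map is an isomorphism. The only delicate point is verifying that all identifications above are indeed compatible with the canonical maps in the obvious way; this is a naturality check that I expect to be routine, and should be the main thing to write out carefully. Everything else is an invocation of previously established lemmas.
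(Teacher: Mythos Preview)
Your argument is correct, but it takes a different route from the paper. The paper's proof is a two-line hint: ``Similar to Lemma~\ref{lattice-flat.thm}. Use Lemma~\ref{intersection.thm}.'' The intended argument is to reduce to $P=R$ exactly as in Lemma~\ref{lattice-flat.thm}, then repeat the chain of equalities in that proof with $N$ in place of $R$: pick $a,b$ with $aF'\subset M\subset F'$ and $bG\subset N\subset G$, use Lemma~\ref{intersection.thm} to write $N=N[1/ab]\cap\bigcap_{P}N_P$ (a finite intersection, over the minimal primes of $ab$), and use that $M$ and $N$ both become finite free over $R[1/ab]$ and over each $R_P$ to push the tensor with $F$ inside the $\Hom$.

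Your approach instead exploits $N\cong(N^*)^*$ to rewrite $N\otimes_RP$ as $\Hom_R(N^*,P)$ via Lemma~\ref{lattice-flat.thm}, and then uses tensor--Hom adjunction to reduce the whole statement to a single application of Lemma~\ref{lattice-flat.thm} to the lattice $(M\otimes_RN^*)\tf$. This is cleaner in that it invokes Lemma~\ref{lattice-flat.thm} as a black box rather than rerunning its proof, at the cost of pulling in Lemma~\ref{Hom-lattice.thm} and Lemma~\ref{tensor-lattice.thm} and requiring the naturality check you flag. The paper's route is more self-contained but repeats computation; yours is more structural. Both are fine.
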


\begin{proof}
  Similar to Lemma~\ref{lattice-flat.thm}.
  Use Lemma~\ref{intersection.thm}.
\end{proof}

\begin{lemma}\label{Hom-reflexive.thm}
  Let $R$ be a Krull domain,
  $M$ an $R$-module such that $M\tf$ is a lattice,
  and $N$ a reflexive $R$-module.
  Then $\Hom_R(M,N)$ is reflexive.
\end{lemma}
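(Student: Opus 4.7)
The plan is to emulate directly the intersection characterization of reflexivity given by Lemma~\ref{intersection.thm}. First I would reduce to the case that $M$ is a lattice: since $N$ is reflexive and in particular torsion-free, any $R$-linear map $M\to N$ annihilates $M\tor$, so $\Hom_R(M,N)=\Hom_R(M\tf,N)$; after replacing $M$ by $M\tf$ we may assume $M$ is a lattice, and Lemma~\ref{Hom-lattice.thm} then gives that $L:=\Hom_R(M,N)$ is itself a lattice.

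Next, by Lemma~\ref{intersection.thm} applied to $L$, it suffices to verify $L=\bigcap_{P\in X^1(R)}L_P$ inside an appropriate ambient $K$-vector space. A convenient choice is $\Hom_K(K\otimes_R M,K\otimes_R N)$, and the identification $K\otimes_R L\cong \Hom_K(K\otimes_R M,K\otimes_R N)$ is the special case of Lemma~\ref{Krull-Hom.thm} with $P=R$ and $F=K$. Applying the same lemma instead with $F=R_P$ for each height-one prime $P$ gives $L_P\cong \Hom_R(M,N_P)$. Since $\Hom_R(M,-)$ preserves intersections of submodules (being left exact and commuting with limits in the second argument),
\[
\bigcap_{P\in X^1(R)} L_P=\bigcap_{P}\Hom_R(M,N_P)=\Hom_R\Big(M,\bigcap_{P}N_P\Big).
\]
Reflexivity of $N$, together with Lemma~\ref{intersection.thm} applied to $N$, yields $\bigcap_{P}N_P=N$, so the right-hand side equals $\Hom_R(M,N)=L$. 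Chaining these equalities provides the desired identity, and Lemma~\ref{intersection.thm} then gives reflexivity of $L$.

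I do not anticipate a serious obstacle: the argument is a formal combination of the intersection characterization of reflexivity (Lemma~\ref{intersection.thm}) and the commutation of $\Hom$ with flat base change for lattice source and reflexive target (Lemma~\ref{Krull-Hom.thm}). The only point meriting a moment of attention is ensuring that all relevant submodules are genuinely submodules of the single ambient $K$-vector space $\Hom_K(K\otimes_R M,K\otimes_R N)$, so that the nested intersections are honest intersections of subsets rather than formal limits; this is exactly the content of the $F=K$ case of Lemma~\ref{Krull-Hom.thm} and requires no further work.
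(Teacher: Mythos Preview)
Your proposal is correct and follows essentially the same approach as the paper's proof: reduce to $M$ a lattice, invoke Lemma~\ref{Hom-lattice.thm} to see that $\Hom_R(M,N)$ is a lattice, then use Lemma~\ref{Krull-Hom.thm} together with the intersection characterization of reflexivity (Lemma~\ref{intersection.thm}) to obtain the chain $\Hom_R(M,N)=\Hom_R(M,\bigcap_P N_P)=\bigcap_P\Hom_R(M,N_P)=\bigcap_P\Hom_R(M,N)_P$. Your write-up is more explicit about the ambient $K$-vector space and the commutation of $\Hom$ with intersections, but the argument is the same.
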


\begin{proof}
  We may assume that $M$ is a lattice.
  By Lemma~\ref{Hom-lattice.thm}, $\Hom_R(M,N)$ is an $R$-lattice.
  By Lemma~\ref{Krull-Hom.thm},
  \begin{multline*}
    \Hom_R(M,N)=\Hom_R(M,\bigcap_{P\in X^1(R)}N_P)
    =\bigcap_P\Hom_{R}(M,N_P)\\
    =\bigcap_P\Hom_R(M,N)_P.
  \end{multline*}
\end{proof}

\begin{lemma}\label{associativity.thm}
  Let $R$ be a Krull domain, and $M$ and $N$ be $R$-modules such that
  $M\tf$ and $N\tf$ are lattices.
  Then the canonical map
  \[
  (M\otimes_R N)^{**}\rightarrow (M^{**}\otimes_R N)^{**}
  \]
  is an isomorphism.
\end{lemma}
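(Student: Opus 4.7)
The plan is to exploit the intersection description of the reflexive hull provided by Lemma~\ref{intersection.thm}. First I would verify that both $(M\otimes_R N)^{**}$ and $(M^{**}\otimes_R N)^{**}$ are defined and sit inside a common ambient $K$-vector space (with $K=Q(R)$): since $M^{**}$ is reflexive hence a lattice and $N\tf$ is a lattice by hypothesis, Lemma~\ref{tensor-lattice.thm} shows that $(M\otimes_R N)\tf$ and $(M^{**}\otimes_R N)\tf$ are both lattices. Moreover, the canonical map $M\to M^{**}$ becomes an isomorphism after tensoring with $K$ because $M^{**}/M$ is torsion (its localization at $(0)$ vanishes), so both reflexive hulls are naturally submodules of the same $K$-vector space $(K\otimes_R M)\otimes_K(K\otimes_R N)$.

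Next, applying Lemma~\ref{intersection.thm} to each side gives
\[
(M\otimes_R N)^{**}=\bigcap_{P\in X^1(R)}(M\otimes_R N)_P,\qquad (M^{**}\otimes_R N)^{**}=\bigcap_{P\in X^1(R)}(M^{**}\otimes_R N)_P,
\]
with both intersections computed in the common ambient space. Since localization commutes with tensor product, the map between these intersections induced by $M\to M^{**}$ is an isomorphism provided that, for every height one prime $P$, the canonical map $M_P\otimes_{R_P}N_P\to (M^{**})_P\otimes_{R_P}N_P$ is bijective, and this reduces further to showing that $M_P\to(M^{**})_P$ is an isomorphism.

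Finally, fix $P\in X^1(R)$, so that $R_P$ is a DVR. Since $M$ is a lattice it embeds into a finitely generated $R$-module, so $M_P$ embeds into a finitely generated module over the Noetherian ring $R_P$ and is therefore itself finitely generated; being torsion-free and finitely generated over a PID, it is free and in particular reflexive over $R_P$. By Lemma~\ref{Hom-lattice.thm} the module $M^*$ is again a lattice, and two applications of Lemma~\ref{lattice-flat.thm} with the flat module $R_P$ yield $(M^*)_P\cong(M_P)^*$ and $(M^{**})_P\cong(M_P)^{**}=M_P$, as required. The main bookkeeping obstacle is the identification of the two reflexive hulls as subsets of the same $K$-vector space and the compatibility of the canonical map with the height-one intersection formulas; once that is in place, everything collapses to the DVR fact that a finitely generated torsion-free module is free.
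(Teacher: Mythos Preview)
Your proof is correct and follows essentially the same route as the paper: both invoke Lemma~\ref{intersection.thm} to reduce the question to checking that $M_P\otimes_{R_P}N_P\to(M^{**})_P\otimes_{R_P}N_P$ is an isomorphism at each height-one prime $P$, and the paper simply calls this step ``trivial'' while you spell it out via Lemma~\ref{lattice-flat.thm}. One cosmetic point: the paper first replaces $M,N$ by $M\tf,N\tf$, and you should do the same so that the intersection formula applies cleanly and so that phrases like ``$M^{**}/M$ is torsion'' and the displayed identity $\bigcap_P(M\otimes_R N)_P$ are literally correct (otherwise $M\to M^{**}$ need not be injective and $(M\otimes_R N)_P$ may carry torsion).
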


\begin{proof}
  Replacing $M$ and $N$ by $M\tf$ and $N\tf$, respectively, we may assume
  that $M$ and $N$ are lattices.
  By Lemma~\ref{tensor-lattice.thm}, 
Lemma~\ref{intersection.thm}, 
and Lemma~\ref{Hom-reflexive.thm},
it suffices to show that
  for any height one prime $P$ of $R$, 
  \[
  ((M\otimes_R N)\tf)_P\rightarrow ((M^{**}\otimes_R N)\tf)_P
  \]
  is an isomorphism.
  This is equivalent to say that
  \[
  M_P\otimes_{R_P}N_P\rightarrow (M^{**})_P\otimes_{R_P}N_P
  \]
  is an isomorphism.
  This is trivial.
\end{proof}

\paragraph\label{lattice.par}
Let $X$ be a scheme.
We say that $X$ is locally integral (resp.\ locally Krull) 
if there exists some affine open covering
$X=\bigcup_{i\in I}\Spec A_i$ with each $A_i$ a domain (resp.\ Krull domain).
A locally Krull scheme is locally integral.
A locally integral scheme is a disjoint union
$X=\bigcup_{j\in J}X_j$ with each $X_j$ an integral closed open subscheme.
If $X$ is locally Krull and $U=\Spec A$ is an affine open subset with
$A$ a domain, then $A$ is a Krull domain, as can be seen easily from
Lemma~\ref{Krull-descent.thm}.

\paragraph
Let $X$ be a locally integral scheme.
An $\O_X$-module $\M$ is called a lattice or $\O_X$-lattice 
if $\M$ is quasi-coherent, and
for any affine open subset $U=\Spec A$ of $X$ with $A$ an integral domain, 
$\Gamma(U,\M)$ is an $A$-lattice.
This is equivalent to say that there exists some affine open covering
$X=\bigcup_{i\in I}U_i$ such that 
each $A_i=\Gamma(U_i,X)$ is an integral domain and $\Gamma(U_i,\M)$ is
an $A_i$-lattice.
An $\O_X$-module $\M$ is said to be {\em reflexive} if $\M$ is an
$\O_X$-lattice and the canonical map $\M\rightarrow\M^{**}$ is
an isomorphism.
For a quasi-coherent $\O_X$-module $\M$, set $\M\tf=\M/\M\tor$, where
$\M\tor$ is the torsion part of $\M$.
A lattice $\M$ is said to be of rank $n$ if for any point $\xi$ of $X$
such that $\O_{X,\xi}$ is a field, $\M_\xi$ is an $n$-dimensional 
$\O_{X,\xi}$-vector space.

\begin{lemma}
  Let $X$ be a locally Krull scheme, and $\M$, $\N$, $\F$, and $\G$  be 
  quasi-coherent $\O_X$-modules.
  Assume that $\M\tf$ is a lattice, $\N$ is reflexive, and
  $\F$ and $\G$ are flat.
  Then
  \begin{enumerate}
  \item[\bf 1] For any flat morphism $\varphi:Y\rightarrow X$, 
    the canonical map
    \[
    P: \varphi^*\uHom_{\O_X}(\M,\N\otimes_{\O_X} \F)
    \rightarrow
    \uHom_{\O_Y}(\varphi^*\M,\varphi^*\N\otimes_{\O_Y}\varphi^*\F)
    \]
    is an isomorphism.
  \item[\bf 2] $\uHom_{\O_X}(\M,\N\otimes_{\O_X} \F)$ is quasi-coherent.
  \item[\bf 3] The canonical map
    \[
    \uHom_{\O_X}(\M,\N\otimes_{\O_X} \F)\otimes_{\O_X}\G
    \rightarrow
    \uHom_{\O_X}(\M,\N\otimes_{\O_X} \F\otimes_{\O_X}\G)
    \]
    is an isomorphism.
  \end{enumerate}
\end{lemma}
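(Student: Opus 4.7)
The plan is to reduce all three assertions to the affine setting, where Lemma~\ref{Krull-Hom.thm} does all the work. Cover $X$ by affine opens $U=\Spec A$ with $A$ a Krull domain (possible by (\ref{lattice.par})) and write $M=\Gamma(U,\M)$, $N=\Gamma(U,\N)$, $F=\Gamma(U,\F)$, $G=\Gamma(U,\G)$; then $M\tf$ is an $A$-lattice, $N$ is a reflexive $A$-module, and $F,G$ are flat. Since $N\otimes_A F$ is torsion-free (the reflexive $N$ is torsion-free, and tensoring with the flat $F$ preserves this), one has $\Hom_A(M,N\otimes_A F)=\Hom_A(M\tf,N\otimes_A F)$, so I may freely replace $M$ by $M\tf$ when applying Lemma~\ref{Krull-Hom.thm}. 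I would establish the three statements in the order (2), (3), (1).

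For (2), the sections of $\uHom_{\O_X}(\M,\N\otimes_{\O_X}\F)$ on a principal open $D(f)\subset U$ are $\Hom_{A_f}(M_f,(N\otimes_A F)_f)$. By the extension-of-scalars adjunction $(-)\otimes_A A_f\dashv\mathrm{res}$, this equals $\Hom_A(M,(N\otimes_A F)\otimes_A A_f)$, and Lemma~\ref{Krull-Hom.thm} (with $P:=F$ and $F:=A_f$ there) identifies it with $\Hom_A(M,N\otimes_A F)\otimes_A A_f$. Thus $\uHom_{\O_X}(\M,\N\otimes_{\O_X}\F)|_U$ is the quasi-coherent sheaf associated to the $A$-module $\Hom_A(M,N\otimes_A F)$, proving (2).

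For (3), having (2), the statement on $U$ becomes the claim that
\[
\Hom_A(M,N\otimes_A F)\otimes_A G\longrightarrow \Hom_A(M,N\otimes_A F\otimes_A G)
\]
is an isomorphism, which is exactly Lemma~\ref{Krull-Hom.thm} with $P:=F$ and $F:=G$ in the notation there.

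For (1), since the assertion is local on $Y$, cover $\varphi^{-1}(U)$ by affine opens $V=\Spec B$ with $B$ a flat $A$-algebra. By (2) the sections on $V$ of the left-hand side of $P$ are $\Hom_A(M,N\otimes_A F)\otimes_A B$, while those of the right-hand side are $\Hom_B(M\otimes_A B,\,N\otimes_A F\otimes_A B)$. Lemma~\ref{Krull-Hom.thm} (with $F:=B$) identifies the former with $\Hom_A(M,N\otimes_A F\otimes_A B)$, and the adjunction $(-)\otimes_A B\dashv\mathrm{res}$ converts the latter to the same group. The main obstacle is purely organizational: quasi-coherence of the $\uHom$ must be in place before one can identify sections of the LHS in (3) and (1) with the concrete $A$-module $\Hom_A(M,N\otimes_A F)$ and its base changes. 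Beyond this bookkeeping there is no genuine difficulty, as Lemma~\ref{Krull-Hom.thm} does all the heavy lifting.
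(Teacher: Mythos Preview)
Your proof is correct and follows exactly the approach the paper intends: the paper's own proof is the single line ``Obvious by Lemma~\ref{Krull-Hom.thm},'' and you have simply unpacked how that lemma yields each of (1)--(3) after passing to an affine chart $\Spec A$ with $A$ Krull. Your care in replacing $M$ by $M\tf$ (using that $N\otimes_A F$ is torsion-free) and in establishing quasi-coherence first so that sections of the $\uHom$ can be identified with the module-level $\Hom$ is precisely the bookkeeping the paper suppresses.
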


\begin{proof}
  Obvious by Lemma~\ref{Krull-Hom.thm}.
\end{proof}

\begin{lemma}
  Let $G$ be a flat $S$-group scheme, 
  $X$ be a $G$-scheme, and $\M$ and $\N$ be quasi-coherent
  $(G,\O_X)$-modules.
  If for any flat $S$-morphism $\varphi:Y\rightarrow X$, 
  the canonical map
  \[
  P: \varphi^*\uHom_{\O_X}(\M,\N)
  \rightarrow
  \uHom_{\O_Y}(\varphi^*\M,\varphi^*\N)
  \]
  is an isomorphism, then the $(G,\O_X)$-module $\uHom_{\O_X}(\M,\N)$ is
  quasi-coherent.
\end{lemma}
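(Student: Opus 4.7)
The plan is to exhibit $\uHom_{\O_X}(\M,\N)$ as a quasi-coherent $(G,\O_X)$-module by producing its level-zero part together with the equivariant descent datum, i.e.\ an isomorphism $\alpha\colon a^*\H\to p_2^*\H$ on $G\times X$ satisfying the cocycle condition on $G\times G\times X$, where $\H:=\uHom_{\O_X}(\M,\N)$ is regarded as an $\O_X$-module.

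First I would check that $\H$ is quasi-coherent as an ordinary $\O_X$-module. For any affine open $U=\Spec A\subset X$ with $M=\Gamma(U,\M)$ and $N=\Gamma(U,\N)$, the inclusion $\Spec A_f\hookrightarrow U$ is flat, so the hypothesis yields
\[
\Gamma(U,\H)\otimes_A A_f=\Hom_A(M,N)\otimes_A A_f\xrightarrow{\sim}\Hom_{A_f}(M_f,N_f)=\Gamma(D(f),\H),
\]
showing $\H|_U$ is the quasi-coherent sheaf associated to $\Hom_A(M,N)$.

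Next I would construct $\alpha$. Since $G$ is $S$-flat, both face maps $a,p_2\colon G\times X\to X$ are flat $S$-morphisms, so the hypothesis produces canonical isomorphisms
\[
a^*\H\xrightarrow{\sim}\uHom_{\O_{G\times X}}(a^*\M,a^*\N),\qquad p_2^*\H\xrightarrow{\sim}\uHom_{\O_{G\times X}}(p_2^*\M,p_2^*\N).
\]
Conjugation by the descent isomorphisms $\alpha_\M\colon a^*\M\xrightarrow{\sim} p_2^*\M$ and $\alpha_\N\colon a^*\N\xrightarrow{\sim} p_2^*\N$ supplied by the $(G,\O_X)$-module structures on $\M$ and $\N$ defines an isomorphism between the two internal Hom sheaves on $G\times X$; composing with the two displayed identifications gives the desired $\alpha$. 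Finally, I would verify the cocycle condition for $\alpha$ on $G\times G\times X$ by pulling everything back along the three face maps $G\times G\times X\to G\times X$, invoking the hypothesis once more (each composite $G\times G\times X\to X$ is again a flat $S$-morphism) to rewrite pulled-back internal Homs as internal Homs of pulled-back sheaves, and then applying the cocycle conditions for $\alpha_\M$ and $\alpha_\N$.

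The main obstacle is bookkeeping: one must track several layers of canonical identifications (the flat base-change isomorphisms of $\uHom$, the descent isomorphisms for $\M$ and $\N$, and the face-map compatibilities) and check that they assemble into the cocycle identity on $G\times G\times X$. Once the hypothesis has been used to replace every pulled-back internal Hom by a Hom of pulled-back sheaves, the cocycle condition for $\alpha$ is reduced by pure functoriality to those for $\alpha_\M$ and $\alpha_\N$, and hence holds.
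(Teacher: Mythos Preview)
Your argument is correct and follows essentially the same strategy as the paper's proof. The paper works in the simplicial framework of \cite{ETI}, where a $(G,\O_X)$-module is a sheaf on $B_G^M(X)$ and ``quasi-coherent'' means ``locally quasi-coherent and equivariant''; it then cites \cite[(6.37)]{ETI} to see that the $\alpha_\phi$-maps for $\uHom$ are isomorphisms (this is exactly your use of the flat base-change hypothesis along $a$ and $p_2$), and \cite[(7.6), (7.3)]{ETI} for equivariance and the passage to quasi-coherence. Your descent-datum language simply unpacks these citations: your construction of $\alpha$ and verification of the cocycle condition is precisely the content of ``equivariant,'' and your check on affine opens is the ``locally quasi-coherent'' part. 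One small point worth making explicit is that the $(G,\O_X)$-structure you build on $\H$ agrees with the one it already carries as the internal Hom in the module category over $\Zar(B_G^M(X))$; this is automatic from the naturality of the base-change maps, but since the lemma speaks of \emph{the} $(G,\O_X)$-module $\uHom_{\O_X}(\M,\N)$, it is cleanest to say so.
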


\begin{proof}
  Clearly, $\uHom_{\O_X}(\M,\N)=\uHom_{\O_{B_G^M(X)}}(\M,\N)_{[0]}$ is
  quasi-coherent.

  By \cite[(6.37)]{ETI},
  \[
  \alpha_\phi:(B_G^M(X))_\phi^*\uHom_{\O_{B_G^M(X)}}(\M,\N)_{[0]}
  \rightarrow
  \uHom_{\O_{B_G^M(X)}}(\M,\N)_j
  \]
  is an isomorphism for any $j\in\ob(\Delta_M)=\{[0],[1],[2]\}$ and
  $\phi:[0]\rightarrow j$.
  So $\uHom_{\O_{B_G^M(X)}}(\M,\N)_j$ is quasi-coherent for every $j$, and
  hence $\uHom_{\O_{B_G^M(X)}}(\M,\N)$ is locally quasi-coherent 
  (this is the precise meaning of saying that $\uHom_{\O_X}(\M,\N)$ is locally
  quasi-coherent).
  On the other hand, $\uHom_{\O_{B_G^M(X)}}(\M,\N)$ is equivariant by 
  \cite[(7.6)]{ETI}.
  By \cite[(7.3)]{ETI}, $\uHom_{\O_X}(\M,\N)$, or better, 
$\uHom_{\O_{B_G^M(X)}}(\M,\N)$ is quasi-coherent.
\end{proof}

\begin{corollary}\label{hom.thm}
  Let $G$ and $X$ be as above, and $\M$, $\N$ and $\P$ be quasi-coherent 
  $(G,\O_X)$-modules.
  Assume that $X$ is locally Krull, $\M\tf$ is a lattice,
  $\N$ reflexive, and $\P$ flat.
  Then the $(G,\O_X)$-module $\uHom_{\O_X}(\M,\N\otimes_{\O_X}\P)$ is
  quasi-coherent.
\qed
\end{corollary}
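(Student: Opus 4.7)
The plan is to combine the two lemmas immediately preceding the corollary. The preceding lemma reduces quasi-coherence of $\uHom_{\O_X}(\M,\N')$ as a $(G,\O_X)$-module to verifying, for every flat $S$-morphism $\varphi\colon Y\to X$, that the canonical base-change map
\[
\varphi^*\uHom_{\O_X}(\M,\N')\to \uHom_{\O_Y}(\varphi^*\M,\varphi^*\N')
\]
is an isomorphism. So I would take $\N':=\N\otimes_{\O_X}\P$ and attempt to verify precisely this base-change property.

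The verification is exactly what part~1 of the unnamed lemma just above provides, with $\F:=\P$ in the role of the flat module there. Under the standing hypotheses ($X$ locally Krull, $\M\tf$ a lattice, $\N$ reflexive, $\P$ flat), that lemma yields an isomorphism
\[
\varphi^*\uHom_{\O_X}(\M,\N\otimes_{\O_X}\P)\xrightarrow{\sim}
\uHom_{\O_Y}(\varphi^*\M,\varphi^*\N\otimes_{\O_Y}\varphi^*\P)
\]
for every flat $\varphi$, and since $\varphi^*$ commutes with tensor product this right-hand side is exactly $\uHom_{\O_Y}(\varphi^*\M,\varphi^*\N')$. So the hypothesis of the preceding lemma is satisfied, and $\uHom_{\O_X}(\M,\N\otimes_{\O_X}\P)$ is quasi-coherent as a $(G,\O_X)$-module.

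There is no real obstacle — the work has already been done in the two preceding lemmas. The only thing worth spelling out in the written proof is that the $(G,\O_X)$-module structure on $\uHom_{\O_X}(\M,\N\otimes_{\O_X}\P)$ being considered is the natural one (induced because $\M$, $\N$, and $\P$ are all $(G,\O_X)$-modules), which is what the hom-sheaf machinery from \cite[(7.6)]{ETI} invoked in the previous lemma uses; this ensures the equivariant structure and the underlying quasi-coherent sheaf agree, so that the reduction to flat base change is legitimate. Thus the proof is a two-line application: invoke the unnamed lemma to get flat-base-change compatibility, then invoke the quasi-coherence criterion.
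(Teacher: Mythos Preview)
Your proposal is correct and is exactly the argument the paper intends: the corollary is marked \qed\ precisely because it follows immediately by feeding part~{\bf 1} of the first preceding lemma (with $\F=\P$) into the hypothesis of the second preceding lemma, using $\N'=\N\otimes_{\O_X}\P$ as the target module. Your remark that $\varphi^*(\N\otimes_{\O_X}\P)\cong\varphi^*\N\otimes_{\O_Y}\varphi^*\P$ is the only identification needed to match the two statements.
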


\paragraph\label{equivariant-class.par}
Let $Y$ be a locally Krull scheme.
We denote the set of isomorphism classes of rank-one reflexive sheaves 
by $\Cl(Y)$, and call it the class group of $Y$.
Let $G$ be a flat $S$-group scheme, 
$X$ be a $G$-scheme which is locally Krull.
A quasi-coherent $(G,\O_X)$-module which is reflexive (of rank $n$) as
an $\O_X$-module is simply called a reflexive $(G,\O_X)$-module (of
rank $n$).
We denote the set of isomorphism classes of rank-one reflexive 
$(G,\O_X)$-modules by 
$\Cl(G,X)$, and call it the $G$-equivariant class group 
of $X$.
There is an obvious map $\alpha:\Cl(G,X)\rightarrow \Cl(X)$, forgetting
the $G$-action.
By Lemma~\ref{tensor-lattice.thm}, Lemma~\ref{associativity.thm} and
Corollary~\ref{hom.thm}, defining
\[
[\M]+[\N]=[(\M\otimes_{\O_X}\N)^{**}],
\]
$\Cl(G,X)$ and $\Cl(Y)$ are abelian (additive) groups, and 
$\alpha$ is a homomorphism.

Note that $\Pic(G,X)$ is a subgroup of $\Cl(G,X)$, and 
$\Pic(Y)$ is a subgroup of $\Cl(Y)$.
Note that $\Ker\alpha=\Ker\rho$, where $\rho:\Pic(G,X)\rightarrow\Pic(X)$
is the map forgetting the $G$-action, as before.

\begin{lemma}\label{reflexive-ascent.thm}
Let $\varphi:X\rightarrow Y$ be a flat morphism of schemes.
Assume that $X$ and $Y$ are locally integral.
If $\M$ is an $\O_Y$-lattice, then $\varphi^*\M$ is an $\O_X$-lattice.
If $Y$ is locally Krull and $\M$ is a reflexive $\O_Y$-module, then
$\varphi^*\M$ is reflexive.
\end{lemma}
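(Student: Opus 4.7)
The plan is to reduce both assertions to the corresponding affine statements in Lemma~\ref{basic2.thm} and Lemma~\ref{flat-reflexive.thm}. First I would choose an affine open cover $Y=\bigcup_i\Spec A_i$ with each $A_i$ a domain (a Krull domain for the second assertion), and within each preimage $\varphi^{-1}(\Spec A_i)$ a further affine open cover by $\Spec B_{ij}$ with each $B_{ij}$ a domain. Such a refinement exists because a locally integral scheme is a disjoint union of integral closed open subschemes by~(\ref{lattice.par}), so every open subset of $X$ admits an affine cover by spectra of domains. Flatness of $\varphi$ then guarantees that each $B_{ij}$ is a flat $A_i$-algebra that is a domain.

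For the lattice assertion, set $M_i:=\Gamma(\Spec A_i,\M)$, which is an $A_i$-lattice by hypothesis. Then $\Gamma(\Spec B_{ij},\varphi^*\M)\cong B_{ij}\otimes_{A_i}M_i$ is a $B_{ij}$-lattice by Lemma~\ref{basic2.thm}({\bf 1}). Since $\{\Spec B_{ij}\}$ forms an affine cover of $X$ and the sections of $\varphi^*\M$ over each piece are lattices over the corresponding domain, $\varphi^*\M$ is an $\O_X$-lattice in the sense of~(\ref{lattice.par}).

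For the reflexivity assertion, additionally arrange each $A_i$ to be a Krull domain, which is possible since $Y$ is locally Krull. Then each $M_i$ is a reflexive $A_i$-module, and applying Lemma~\ref{flat-reflexive.thm} to the flat domain extension $A_i\to B_{ij}$ yields that $B_{ij}\otimes_{A_i}M_i$ is a reflexive $B_{ij}$-module; that is, the canonical double-dual map is an isomorphism after restriction to $\Spec B_{ij}$. Since this holds on every piece of an affine cover of $X$, the canonical map $\varphi^*\M\to(\varphi^*\M)^{**}$ is a global isomorphism, and $\varphi^*\M$ is reflexive.

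I do not anticipate any real obstacle: the entire argument is a routine local reduction to two lemmas that have already been proved. The only mild point requiring attention is the existence of an integral affine refinement of the preimage cover, which follows immediately from the decomposition of a locally integral scheme into its integral components; and the fact that $(-)^{**}$ for lattices over a Krull base is compatible with flat localization, which is exactly what Lemma~\ref{flat-reflexive.thm} encodes.
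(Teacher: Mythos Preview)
Your proposal is correct and follows essentially the same approach as the paper: the paper's proof is simply ``Follows from Lemma~\ref{basic2.thm} and Lemma~\ref{flat-reflexive.thm},'' and you have spelled out exactly the local reduction that makes this citation work.
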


\begin{proof}
Follows from Lemma~\ref{basic2.thm} and Lemma~\ref{flat-reflexive.thm}.
\end{proof}

\begin{lemma}\label{reflexive-descent2.thm}
Let $\varphi:X\rightarrow Y$ be an fpqc morphism
of schemes, and assume that $X$ is locally Krull.
Then $Y$ is locally Krull.
If $\M$ is a quasi-coherent $\O_Y$-module such that $\varphi^*\M$ is an
$\O_X$-lattice \(resp.\ reflexive $\O_X$-module\), then $\M$ is an
$\O_Y$-lattice \(resp.\ reflexive $\O_Y$-module\).
\end{lemma}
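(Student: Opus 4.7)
The plan is to reduce to the situation handled by Lemma~\ref{Krull-descent.thm} and Lemma~\ref{reflexive-descent.thm}, by exploiting the fpqc property to replace $\varphi$ with a faithfully flat ring map whose source is a finite product of Krull domains.

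First I would show that $Y$ is locally Krull. Both the assertion and the hypothesis on $\varphi^*\M$ are local on $Y$, so I would pick an affine open $V=\Spec A \subset Y$ and work there. By the fpqc property (see \ref{fpqc.par}), there is a quasi-compact open $U$ of $\varphi^{-1}(V)$ with $\varphi(U)=V$. Since $X$ is locally Krull, every point of $U$ has an affine open neighborhood of the form $\Spec A_i$ with $A_i$ a Krull domain; by quasi-compactness of $U$, finitely many $\Spec A_1,\ldots,\Spec A_r$ suffice to cover $U$. Their disjoint union is $\Spec B$ with $B=\prod_i A_i$ a finite product of Krull domains, and the induced map $A\to B$ is faithfully flat because $\Spec B\to V$ is surjective and flat. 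Lemma~\ref{Krull-descent.thm} then gives that $A$ is a finite product of Krull domains, so $V$ is a disjoint union of affine Krull schemes, and varying $V$ shows $Y$ is locally Krull.

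For the descent of the lattice (resp.\ reflexive) property, I would again localize $Y$. By what was just proved, we may further shrink $V=\Spec A$ so that $A$ is an honest Krull domain. Set $M=\Gamma(V,\M)$. Construct $A\to B=\prod_i A_i$ exactly as above, with each $A_i$ a Krull domain and $A\to B$ faithfully flat. The hypothesis that $\varphi^*\M$ is a lattice (resp.\ reflexive) on $X$, restricted to each $\Spec A_i\subset X$, says precisely that $A_i\otimes_A M$ is an $A_i$-lattice (resp.\ reflexive $A_i$-module). Lemma~\ref{reflexive-descent.thm} now applies directly and yields that $M$ itself is an $A$-lattice (resp.\ reflexive $A$-module). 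Since this holds for a covering family of $V$'s, $\M$ is an $\O_Y$-lattice (resp.\ reflexive) in the sense of (\ref{lattice.par}).

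There is no real obstacle beyond bookkeeping; the substantive content has been done in Lemma~\ref{Krull-descent.thm} and Lemma~\ref{reflexive-descent.thm}. The only point requiring a little care is the fpqc reduction: one must use the fpqc definition to produce a quasi-compact $U$ mapping onto $V$, so that finitely many Krull affines of $X$ assemble into a single affine $\Spec B$ faithfully flat over $\Spec A$; this is exactly where (\ref{fpqc.par}) is used, and it is what makes the hypotheses of the two descent lemmas available.
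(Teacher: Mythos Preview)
Your proof is correct and follows the same approach as the paper: the paper's proof simply says the first assertion is immediate from Lemma~\ref{Krull-descent.thm} and the second from Lemma~\ref{reflexive-descent.thm}, leaving the fpqc reduction implicit. You have spelled out exactly the reduction the paper has in mind---using the fpqc property to find a quasi-compact open over an affine $V=\Spec A$, covering it by finitely many Krull-domain affines, and assembling a faithfully flat map $A\to\prod_i A_i$ to feed into the two lemmas.
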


\begin{proof}
The first assertion is an immediate consequence of 
Lemma~\ref{Krull-descent.thm}.

The second assertion follows from
Lemma~\ref{reflexive-descent.thm}.
\end{proof}

\paragraph 
Let $G$ be a flat $S$-group scheme, and $X$ be a locally Krull $G$-scheme.
We denote the category of reflexive $(G,\O_X)$-modules by $\Ref(G,X)$.
Its full subcategory consisting of reflexive $(G,\O_X)$-modules of rank $n$
is denoted by $\Ref_n(G,X)$.

If we do not consider a $G$-action, $\Ref(X)$ and $\Ref_n(X)$ are defined
similarly.

\begin{lemma}\label{pfb-fpqc.thm}
Let $G$ be an $S$-group scheme, $\varphi:X\rightarrow Y$ be a 
principal $G$-bundle such that the second projection $G\times X\rightarrow X$ 
is flat.
Then $\varphi$ is fpqc.
\end{lemma}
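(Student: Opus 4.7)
The plan is to extract both faithful flatness and the quasi-compact covering property of $\varphi$ from the trivialization data furnished by Lemma~\ref{pfb-equiv.thm}, which supplies an fpqc morphism $f:Y'\rightarrow Y$ factoring as $f=\varphi\circ s$ for some $S$-morphism $s:Y'\rightarrow X$, together with the fact that $\Phi:G\times X\rightarrow X\times_Y X$, $(g,x)\mapsto(gx,x)$, is an isomorphism.

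For faithful flatness of $\varphi$, the section $s$ and the isomorphism $\Phi$ together yield a $Y'$-isomorphism $G\times_S Y'\cong X\times_Y Y'$ sending $(g,y')$ to $(gs(y'),y')$; its inverse exists because $\Phi^{-1}$ uniquely recovers $g$ from any pair $(x,s(y'))\in X\times_Y X$. Under this identification, the pulled-back morphism $q:X\times_Y Y'\rightarrow Y'$ corresponds to the second projection $G\times_S Y'\rightarrow Y'$, which is itself the base change of the projection $G\times_S X\rightarrow X$ (flat by hypothesis) along $s:Y'\rightarrow X$, and is therefore flat. Faithfully flat descent of flatness along $f$ then gives that $\varphi$ is flat, and surjectivity is immediate from the factorization $f=\varphi\circ s$.

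For the quasi-compact covering property, let $V\subset Y$ be quasi-compact open. Applying the fpqc property of $f$ produces a quasi-compact open $W'\subset Y'$ with $f(W')=V$. The image $s(W')\subset X$ is then quasi-compact, lies in the open set $\varphi^{-1}(V)$, and satisfies $\varphi(s(W'))=V$. Covering $s(W')$ by finitely many affine opens of $\varphi^{-1}(V)$ and taking their union yields a quasi-compact open $U\subset X$ with $s(W')\subset U\subset\varphi^{-1}(V)$, so $\varphi(U)=V$, as required. The main technical point is the trivialization step in the flatness argument: once one has identified $X\times_Y Y'$ with $G\times_S Y'$ via the section, everything else, including the qc covering, is routine bookkeeping about quasi-compact subsets sitting inside open subschemes.
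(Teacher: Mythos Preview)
Your proof is correct and follows essentially the same route as the paper's. The paper starts from the definition of a principal bundle (a trivializing fpqc cover) and reads off the section $Y'\rightarrow X$ from the trivialization $G\times Y'\cong X\times_Y Y'$, while you start from Lemma~\ref{pfb-equiv.thm} and reconstruct that trivialization from the section $s$ and the isomorphism $\Phi$; after that, both proofs use the same flat base-change plus fpqc descent for flatness and the same ``push a quasi-compact set through $s$ and thicken to an open'' argument for the covering condition.
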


\begin{proof}
There is an fpqc map $h:Y'\rightarrow Y$ such that the base change 
$X'\rightarrow Y'$ is 
a trivial $G$-bundle.
Then $h$ is the composite of
\[
Y'\xrightarrow{e}G\times Y'\cong X'=Y'\times_Y X\xrightarrow{p_2}
 X\xrightarrow {\varphi} Y,
\]
and it factors through $X$.
As $G\times X\rightarrow X$ is flat, $G\times Y'\rightarrow Y'$ is also
flat.
Thus $X'\rightarrow Y'$ is flat, and hence so is $\varphi:X\rightarrow Y$
by descent.

Next, take a quasi-compact open subset $U$ of $Y$.
There exists some quasi-compact open subset $V$ of $Y'$ such that
$h(V)=U$.
As the image $W$ of $V$ in $X$ is quasi-compact, there exists some
quasi-compact open subset $W'$ of $\varphi^{-1}(U)$ such that $W\subset W'$.
Then $U=\varphi(W)\subset \varphi(W')\subset \varphi(\varphi^{-1}(U))\subset U$,
and hence $\varphi(W')=U$.

This shows that $\varphi$ is fpqc.
\end{proof}

\begin{lemma}\label{codim-two-isom.thm}
Let $X$ be a locally Krull scheme.
Let $U$ be its open subset.
Let $\varphi:U\hookrightarrow X$ be the inclusion.
Assume that $\codim_X(X-U)\geq 2$.
Then $\varphi^*:\Ref_n X\rightarrow\Ref_nU$ is an equivalence,
and $\varphi_*:\Ref_nU\rightarrow\Ref_nX$ is its quasi-inverse.
\end{lemma}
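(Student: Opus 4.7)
The plan is to work locally on $X$ and exploit the intersection formula from Lemma~\ref{intersection.thm}. All the assertions are Zariski-local on $X$; for any open $W\subseteq X$ one has $\codim_W(W\setminus(W\cap U))\geq\codim_X(X\setminus U)\geq 2$ by definition of codimension, so the hypothesis persists under restriction. Thus I may assume $X=\Spec R$ with $R$ a Krull domain, and the codim-two hypothesis says that every height-one prime $P\in X^1(R)$ corresponds to a point lying in $U$.

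That $\varphi^*$ carries $\Ref_n(X)$ into $\Ref_n(U)$ follows from Lemma~\ref{reflexive-ascent.thm}, with rank preservation automatic on generic-point stalks. The counit $\varphi^*\varphi_*\N\to\N$ is an isomorphism because $\varphi$ is an open immersion. For the unit $\M\to\varphi_*\varphi^*\M$, let $M\subseteq K^n$ be the reflexive $R$-module corresponding to $\M\in\Ref_n(X)$, where $K=Q(R)$. Picking any affine open cover $\{V_\lambda=\Spec A_\lambda\}$ of $U$ and applying Lemma~\ref{intersection.thm} on each $V_\lambda$ (whose sections $\Gamma(V_\lambda,\M)$ are reflexive over $A_\lambda$ by Lemma~\ref{flat-reflexive.thm}) yields
\[
\Gamma(X,\varphi_*\varphi^*\M)=\Gamma(U,\M|_U)=\bigcap_\lambda\Gamma(V_\lambda,\M)=\bigcap_\lambda\bigcap_{Q\in X^1(A_\lambda)}M_Q=\bigcap_{P\in X^1(R)}M_P=M,
\]
the next-to-last equality using the codim-two hypothesis (every $P\in X^1(R)$ is a height-one prime of some $A_\lambda$, since $P\in U$), and the last one being Lemma~\ref{intersection.thm} for $M$ itself. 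Thus the unit is an isomorphism.

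It remains to show $\varphi_*\N\in\Ref_n(X)$ for $\N\in\Ref_n(U)$. The identical computation identifies $N:=\Gamma(X,\varphi_*\N)=\bigcap_{P\in X^1(R)}\N_P$ inside the $n$-dimensional generic fiber $K^n$, each $\N_P$ being a free $R_P$-lattice of rank $n$. Once $N$ is known to be an $R$-lattice, Lemma~\ref{intersection.thm} immediately forces $N^{**}=N$, so $N$ is reflexive of rank $n$ (the rank being visible at the generic point). To exhibit $N$ as a lattice I would pick any non-empty affine open $V=\Spec A\subseteq U$, bracket the reflexive $A$-module $\Gamma(V,\N)$ between $aF$ and $F$ for some finite-free $F\subseteq K^n$ and $a\in A\setminus 0$, and then clear denominators to embed $N$ inside a finite free $R$-module $d^{-1}F\subseteq K^n$. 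The main obstacle is this last step: passing from an $A$-bounding module to an $R$-bounding one requires controlling $\N_P$ for every $P\in X^1(R)$ simultaneously, by combining the reflexivity of $\N$ with the Krull property of $R$ to argue that only finitely many $\N_P$ can differ from the reference lattice $F_P$---the remaining primes contributing only a common denominator $d$ to be cleared.
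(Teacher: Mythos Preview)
Your overall strategy---localize to $X=\Spec R$ with $R$ Krull and exploit the intersection formula of Lemma~\ref{intersection.thm}---is exactly the paper's. The unit and counit arguments are fine. But there is a real gap in the last paragraph, and it is not the lattice bound you flag.

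Computing $N=\Gamma(X,\varphi_*\N)$ and showing that $N$ is a reflexive $R$-module does \emph{not} yet prove $\varphi_*\N\in\Ref_n(X)$: you must also know that $\varphi_*\N$ is quasi-coherent, i.e.\ that $\varphi_*\N\cong\tilde N$ on $\Spec R$. Since $U$ need not be quasi-compact, the open immersion $\varphi$ is not a priori concentrated, and pushforward along such a morphism need not preserve quasi-coherence. Concretely, you would have to show $\Gamma(D(g),\varphi_*\N)=N_g$ for every $g\in R$; your computation gives the left side as $\bigcap_{P\in X^1(R_g)}\N_P$ and the right side as $\bigcap_{P\in X^1(R_g)}N_P$, so you still owe the identity $N_P=\N_P$ for each height-one $P$. (The same issue is hidden in your unit argument: equality of global sections alone forces an isomorphism of sheaves only once both sides are known to be quasi-coherent.)

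The paper closes this gap by an extra reduction you omit. Writing $U=D(I)$ with $\height I\ge2$, one can choose a \emph{finitely generated} subideal $J\subset I$ still of height $\ge2$ (pick $0\neq a\in I$ and one further element of $I$ avoiding each of the finitely many minimal primes of $Ra$). Then $W=D(J)\subset U$ is quasi-compact, and both inclusions $W\hookrightarrow U$ and $W\hookrightarrow X$ satisfy the codimension-two hypothesis; proving the lemma for these two maps yields it for $\varphi$. After this reduction $\varphi$ is quasi-compact quasi-separated, so $\varphi_*\N$ is quasi-coherent by \cite[(9.2.1)]{EGA-I}, and moreover the lattice bound becomes immediate: a finite affine cover $U=\bigcup_i\Spec R[1/f_i]$ produces finitely many generators $m_{ij}$ whose $R$-span $M$ satisfies $\N_P\subset M_P$ for every height-one $P$, whence $N\subset M^{**}$. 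Your single-affine-plus-finitely-many-primes idea for the lattice bound is also correct, and rests on the same Krull-domain finiteness (any nonzero element lies in only finitely many height-one primes) that produces $J$ above; but it does not by itself supply the missing quasi-coherence.
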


\begin{proof}
By Lemma~\ref{reflexive-ascent.thm}, 
$\varphi^*:\Ref_n(G,X)\rightarrow\Ref_n(G,U)$ is well-defined.
Thus it suffices to show that 
$\varphi_*:\Ref(G,U)\rightarrow \Ref(G,X)$ is well-defined, and is a 
quasi-inverse to $\varphi^*$.
That is, for $\N\in\Ref(G,U)$, $\varphi_*\N\in\Ref(G,X)$, and
for $\M\in\Ref(G,X)$, the canonical map $\M\rightarrow \varphi_*\varphi^*\M$ 
is an isomorphism.

The question is local on $X$, and we may assume that $X=\Spec A$ is
affine and integral.
Then $U=X\setminus V(I)$ for some ideal $I$ of $A$ such that $\height I
\geq 2$, where $V(I)=\{P\in\Spec A\mid P\supset I\}$.
We can take a finitely generated ideal $J\subset I$ 
such that $\height J
\geq 2$.
Set $W=X\setminus V(J)$.
It suffices to show the assertion in problem for $W\rightarrow U$ and
$W\rightarrow X$.
So replacing $U$ by $W$ (and changing $X$), we may assume that the 
open immersion $U\rightarrow X$ is quasi-compact.
Replacing $X$ again if necessary, we may assume that $X=\Spec A$ is 
affine and integral.

Now $\varphi$ is concentrated, and hence $\varphi_*\N$ is quasi-coherent.
Let $\eta$ be the generic point of $X$.
Let $U=\bigcup_{i=1}^r U_i$, where $U_i=\Spec A[1/f_i]$ with $f_i\in A\setminus
0$.
Then $\Gamma(U_i,\N)\subset M_i\subset \N_\eta$ for some finitely generated
$A[1/f_i]$-module $M_i$.
Let $m_{i1},\ldots,m_{is_i}\in M_i$ be the generators of $M_i$.
Let $M$ be the $A$-span of $\{m_{ij}\mid 1\leq i\leq r,1\leq j\leq s_i\}$,
and $\M$ the associated sheaf of the $A$-module $M^{**}$ on $X=\Spec A$.
As $\N_P\subset \M_P$ for height one prime ideal of $A$, 
\[
\Gamma(X,\varphi_*\N)=\Gamma(U,\N)=\bigcap_{\height P=1,\;P\in X}\N_P
\subset \bigcap_{\height P=1,\;P\in X}\M_P=\Gamma(X,\M),
\]
and $\varphi_*\N\subset \M$.
Thus $\varphi_*\N$ is a lattice.

Set $N=\Gamma(X,\varphi_*\N)$.
It remains to show that $N$ is a reflexive $A$-module.
This is easy, since
\[
N=\Gamma(U,\N)=\bigcap_{\height P=1,\;P\in U}\N_P
=\bigcap_{\height P=1}(\varphi_*\N)_P =\bigcap_{\height P =1} N_P
\]
by the reflexive property of $\Cal N$ and the
quasi-coherence of $\varphi_*\Cal N$.

Finally, we prove that for $\M\in\Ref_n(X)$, 
$\M\rightarrow \varphi_*\varphi^*\M$ is an isomorphism.
As this is an $O_X$-linear map between quasi-coherent $\O_X$-modules,
it suffices to show that $\Gamma(X,\M)\rightarrow \Gamma
(X,\varphi_*\varphi^*\M)$ is
an isomorphism.
By Lemma~\ref{intersection.thm}, 
\[
\Gamma(X,\M)=\bigcap_{\height P=1,\,P\in X}\M_P\quad\text{and}\quad
\Gamma(X,\varphi_*\varphi^*\M)=\bigcap_{\height P=1,\,P\in U}\M_P,
\]
so they are equal, and we are done.
\end{proof}

\begin{lemma}\label{codim-two.thm}
Let $Y$ be a quasi-compact locally Krull scheme, and $U$ its open subset.
Then there exists some quasi-compact open subset $V$ of $U$ such that
$\codim_U(U\setminus V)\geq 2$.
\end{lemma}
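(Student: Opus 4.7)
The plan is to exploit the quasi-compactness of $Y$ to reduce the problem to a finite collection of affine Krull opens, and then to use the basic Krull property that each nonzero element of a Krull domain lies in only finitely many height-one primes, in order to enlarge a single principal open inside each $U_i:=U\cap\Spec A_i$ so that it covers every codimension-$\leq 1$ point of $U$ sitting in $\Spec A_i$.

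Concretely, I first cover $Y=\bigcup_{i=1}^{n}\Spec A_i$ by finitely many affine opens with each $A_i$ a Krull domain (possible by quasi-compactness of $Y$ together with the observation in paragraph (lattice.par) that an affine open of a locally Krull scheme which is a domain is automatically a Krull domain). Write $U_i=\Spec A_i\setminus V(I_i)$ for some ideal $I_i\subset A_i$. If $I_i=0$, set $V_i:=\Spec A_i$. Otherwise pick a nonzero $f_i\in I_i$ and list the finitely many height-one primes $P_{i,1},\ldots,P_{i,m_i}$ of $A_i$ containing $f_i$. Let $J_i:=\{j : P_{i,j}\not\supset I_i\}$; for each $j\in J_i$ choose $g_{i,j}\in I_i\setminus P_{i,j}$, and set
\[
V_i:=D(f_i)\cup\bigcup_{j\in J_i}D(g_{i,j}),\qquad V:=V_1\cup\cdots\cup V_n.
\]
Each $V_i$ is a finite union of principal opens of $\Spec A_i$ lying inside $U_i$ (since $f_i$ and all $g_{i,j}$ belong to $I_i$), so $V$ is a quasi-compact open of $U$.

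It remains to verify that $V$ contains every $p\in U$ with $\dim\O_{U,p}=\dim\O_{Y,p}\leq 1$. Fix $i$ with $p\in\Spec A_i$ and let $P\subset A_i$ be the corresponding prime, so $P\not\supset I_i$ and $\height P\leq 1$. If $\height P=0$, then $P=0$, so $f_i\notin P$ and $p\in D(f_i)\subset V_i$. If $\height P=1$ and $f_i\notin P$, then again $p\in D(f_i)$. The remaining case is $f_i\in P$: then $P$ is one of the $P_{i,j}$, and $P\not\supset I_i$ forces $j\in J_i$, whence $g_{i,j}\notin P$ and $p\in D(g_{i,j})\subset V_i$. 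This last case is the only (mild) obstacle, and it is precisely what the Krull finiteness property handles: only finitely many $D(g_{i,j})$ need to be added, so quasi-compactness is preserved while every codimension-$\leq 1$ point of $U$ is caught, giving $\codim_U(U\setminus V)\geq 2$.
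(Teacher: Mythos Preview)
Your argument is essentially the paper's proof: reduce to finitely many affine Krull opens, pick a nonzero element of the defining ideal, list the finitely many height-one primes containing it, and for those not already in the closed complement add one more principal open to cover each. The only slip is the edge case $I_i=0$: then $U_i=D(I_i)=\emptyset$, so you should set $V_i:=\emptyset$, not $V_i:=\Spec A_i$ (otherwise $V$ need not lie in $U$); with that correction the proof is fine.
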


\begin{proof}
Let $Y=\bigcup_i Y_i$ with $Y_i$ a spec of a Krull domain.
Then replacing $Y$ with $Y_i$ and $U$ with $Y_i\cap U$, we may assume
that $Y=\Spec A$ with $A$ a Krull domain.
Then there is a radical ideal $I$ of $A$ such that
$U=D(I):=Y\setminus V(I)$.
Take $a\in I\setminus 0$.
Let $\Min(Aa)\setminus V(I)=\{P_1,\ldots,P_r\}$.
Take $b_i\in I\setminus P_i$, and set $J=(a,b_1,\ldots,b_r)$.
Then $\Min(J)\cap X^1(A)\subset V(I)$.
So letting $V=D(J)$, $\codim_U(U\setminus V)\geq 2$.
As $J$ is finitely generated, $V$ is quasi-compact.
\end{proof}

\begin{lemma}\label{locally-Krull-qc.thm}
Let $G$ be a flat $S$-group scheme.
Let $\varphi:U\rightarrow Y$ be a quasi-separated $G$-morphism.
Assume that there exists a factorization $\varphi=\psi h$ such that
$h:U\rightarrow X$ is an open immersion, $\psi:X\rightarrow Y$ 
is quasi-compact, and $X$ is locally Krull \(we do not requre that
$G$ acts on $X$\).
Then for any reflexive $(G,\O_U)$-module $\M$, $\varphi_*\M$
is a quasi-coherent $(G,\O_Y)$-module.
\end{lemma}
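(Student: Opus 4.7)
The plan is to reduce to the case $Y=\Spec B$ affine (quasi-coherence being local on $Y$), exploit the reflexivity of $\M$ to replace it by its restriction to a carefully chosen quasi-compact open $V\subset U$ with $\codim_U(U\setminus V)\geq 2$, and then conclude by verifying that $\varphi|_V:V\to Y$ is quasi-compact and quasi-separated, so that the standard theorem on pushforward of quasi-coherent sheaves applies.

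After assuming $Y$ affine, the hypothesis that $\psi$ is quasi-compact forces $X$ to be quasi-compact, and since $X$ is locally Krull I fix a finite affine open cover $X=\bigcup_{k=1}^n X_k$ with each $X_k=\Spec A_k$ a Krull domain. Running the construction in the proof of Lemma~\ref{codim-two.thm} chart by chart, for each $k$ I produce a finitely generated ideal $J_k\subset A_k$ with $V_k:=D_{X_k}(J_k)\subset U\cap X_k$ and $\codim_{U\cap X_k}((U\cap X_k)\setminus V_k)\geq 2$. Setting $V:=\bigcup_k V_k$ and $j:V\hookrightarrow U$, the open $V$ is quasi-compact in $U$ and $\codim_U(U\setminus V)\geq 2$. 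By the reflexivity of $\M$ and Lemma~\ref{codim-two-isom.thm} applied to the locally Krull scheme $U$, I obtain $\M\cong j_*(j^*\M)$, whence $\varphi_*\M\cong(\varphi j)_*(j^*\M)$ as $\O_Y$-modules.

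The key technical step is to verify that $\varphi j:V\to Y$ is quasi-compact and quasi-separated. Quasi-separatedness is inherited from $\varphi$, because $\Delta_{\varphi j}$ is the restriction of $\Delta_\varphi$ to the open subscheme $V\times_Y V\subset U\times_Y U$, and a restriction of a quasi-compact morphism to an open subscheme is again quasi-compact. Quasi-compactness follows from the specific form of $V$: every quasi-compact open of the affine $Y$ is a finite union of basic opens $D(b)$, whose preimage in $V_k$ is the basic open $D_{X_k}(J_k\cdot\psi^\#(b))$ of the affine scheme $X_k$ and hence quasi-compact, and $V=\bigcup_k V_k$ is a finite union. Applying the standard fact that a quasi-compact quasi-separated pushforward preserves quasi-coherence then yields that $\varphi_*\M$ is a quasi-coherent $\O_Y$-module; the $(G,\O_Y)$-module structure comes for free from the functoriality of pushforward along the $G$-morphism $\varphi$ on the simplicial ringed sites $\Zar(B_G^M(U))\to\Zar(B_G^M(Y))$, and quasi-coherence of the components on $G^n\times Y$ reduces to quasi-coherence on $Y$ via the equivariance isomorphisms. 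The main obstacle I anticipate is precisely this quasi-compactness verification: because $X$ is not assumed quasi-separated, an arbitrary quasi-compact $V\subset U$ need not pull quasi-compact opens of $Y$ back to quasi-compact opens, which is why it is essential that the $V_k$ be chosen as basic opens in affine charts of $X$.
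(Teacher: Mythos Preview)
Your argument for the quasi-coherence of the underlying $\O_Y$-module $(\varphi_*\M)_{[0]}$ is essentially the paper's: reduce to $Y$ affine, use Lemma~\ref{codim-two.thm} to find a quasi-compact $V\subset U$ with $\codim_U(U\setminus V)\geq 2$, replace $\M$ by $j_*j^*\M$ via Lemma~\ref{codim-two-isom.thm}, and invoke qcqs pushforward. Your extra care about why $\varphi j$ is quasi-compact is fine.

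The genuine gap is in the last sentence. You say that ``quasi-coherence of the components on $G^n\times Y$ reduces to quasi-coherence on $Y$ via the equivariance isomorphisms,'' but the equivariance isomorphisms you have are for $\M$, not for $\varphi_*\M$. Pushforward along a $G$-morphism always produces a module on the simplicial site, but it does \emph{not} automatically preserve equivariance: the structure maps $\alpha_\phi:(B_G^M(Y)_\phi)^*(\varphi_*\M)_{[0]}\to(\varphi_*\M)_{[j]}$ are only maps, and showing they are isomorphisms is precisely showing that flat base change (Lipman's theta) holds for $\varphi$ and the flat projection $G^j\times Y\to Y$. This is not automatic here because $\varphi$ itself is not qcqs. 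Nor can you simply rerun your $V$-argument on $G^j\times U\to G^j\times Y$, since $G^j\times X$ need not be locally Krull and you have no reflexivity statement available there.

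What the paper does, and what you are missing, is exactly this base-change verification: for an arbitrary flat $f:F\to Y$ one shows that $\theta:f^*\varphi_*\M\to(p_2)_*p_1^*\M$ is an isomorphism by the same $V$-trick, factoring the relevant square through $V$ and reducing to two instances of theta for the qcqs map $V\to Y$ (resp.\ $V\to U$), where \cite[(3.9.5)]{Lipman} applies. Once theta is known, local quasi-coherence of $(\varphi_*\M)_{[j]}$ and equivariance of $\varphi_*\M$ follow from \cite[(6.20), (7.6)]{ETI}, and then \cite[(7.3)]{ETI} gives quasi-coherence of the $(G,\O_Y)$-module. You should insert this step.
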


\begin{proof}
Let $\M_{[0]}$ be the associated $\O_U$-module of $\M$.
We show that $\varphi_*\M_{[0]}$ is quasi-coherent.
In order to do so, we may assume that $G$ is trivial.
Then the question is local on $Y$, we may assume that $Y$ is
affine.
Now by Lemma~\ref{codim-two.thm}, 
we can take a quasi-compact open subscheme $V$ of $U$ such that
$\codim_U(U\setminus V)\geq 2$.
Let $i:V\rightarrow U$ be the inclusion.
Then $\M\cong i_*i^*\M$ by Lemma~\ref{codim-two-isom.thm}.
So we may assume that $U$ itself is quasi-compact.
Then $\varphi$ is quasi-compact quasi-separated, and hence
$\varphi_*\M$ is quasi-coherent by \cite[(9.2.1)]{EGA-I}, as required.

Next we show that for any flat $Y$-scheme $f:F\rightarrow Y$, 
Lipman's theta $\theta:f^*\varphi_*\M\rightarrow (p_2)_*p_1^*\M$ 
(see for the definition, \cite[(3.7.2)]{Lipman} and \cite[(1.21)]{ETI})
is
an isomorphism, where $p_1:U\times_Y F\rightarrow U$ and
$p_2:U\times_Y F\rightarrow F$ are projection maps.
Again, $G$ is irrelevant here, and we may assume that $Y$ is affine.
Take $V$ as above, and consider the commutative diagram
\[
\xymatrix{
V\times_Y F \ar[r]^{i\times 1} \ar[d]^{q_1}
\ar@{}[dr]|{\text{\normalsize $\tau$}}
& 
U\times_Y F \ar[d]^{p_1} \ar[r]^{p_2} 
\ar@{}[dr]|{\text{\normalsize $\sigma$}}
& 
F \ar[d]^f \\
V \ar[r]^i
& U \ar[r]^\varphi & Y
}.
\]
By \cite[(3.7.2)]{Lipman}, it suffices to prove that
\[
\theta(\tau): p_1^*i_*\N\rightarrow (i\times 1)_*q_1^*\N
\]
and
\[
\theta(\tau+\sigma): f^*(\varphi i)_*\N\rightarrow (p_2(i\times 1))_*q_1^*\N
\]
are isomorphisms, where $\N=i^*\M$.
Replacing $U$ by $V$ and $\M$ by $\N$,
it is easy to see that we may assume that $Y$ is affine and $U$ is
quasi-compact.
This case is \cite[(3.9.5)]{Lipman} (see also \cite[(7.12)]{ETI}).

Now consider the original problem.
As we have seen, Lipman's theta
$\theta: (B_G^M(Y)_\phi)^*\varphi_*\M_{[0]}\rightarrow
(B_G^M(\varphi)_{[j]})_*B_G^M(U)_\phi^*\M_{[0]}$ is an isomorphism for
any morphism $\phi:[0]\rightarrow[j]$ in $\Delta_M$.
In particular, letting $j=1,2$ and taking any $\phi:[0]\rightarrow [j]$, 
we have that $\varphi_*\M$ (which is officially $B_G^M(\varphi)_*\M$) is
locally quasi-coherent.
Indeed, we already know that $\varphi_*\M_{[0]}$ is quasi-coherent,
and $B_G^M(U)_\phi^*\M_{[0]}\cong \M_{[j]}$ by the equivariance of $\M$.

Moreover, by \cite[(6.20)]{ETI}, the alpha map 
$\alpha_\phi: B_G^M(Y)_\phi^* (B_G^M(\varphi)_*\M)_{[0]}\rightarrow 
(B_G^M(\varphi)_*\M)_{[j]}$ is an isomorphism for any $[j]\in\{[0],[1],[2]\}$ 
and any $\phi:[0]\rightarrow [j]$.
By \cite[(7.6), {\bf 3}]{ETI}, $\varphi_*\M$ is equivariant.
Hence $\varphi_*\M$ is quasi-coherent by \cite[(7.3)]{ETI}, as desired.
\end{proof}

\begin{corollary}\label{codim-two-ref.thm}
Let $G$ be a flat $S$-group scheme, and
$X$ be a locally Krull $G$-scheme.
Let $U$ be its $G$-stable open subset.
Let $\varphi:U\hookrightarrow X$ be the inclusion.
Assume that $\codim_X(X\setminus U)\geq 2$.
Then $\varphi^*:\Ref_n (G,X)\rightarrow\Ref_n(G,U)$ is an equivalence,
and $\varphi_*:\Ref_n(G,U)\rightarrow\Ref_n(G,X)$ is its quasi-inverse.
In particular, $\varphi^*:\Cl(G,X)\rightarrow\Cl(G,U)$ defined by
$\varphi^*[\M]=[\varphi^*\M]$ is an isomorphism whose inverse is given
by $\N\mapsto [\varphi_*\N]$.
\qed
\end{corollary}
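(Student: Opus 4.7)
The plan is to reduce the equivariant statement to the non-equivariant Lemma~\ref{codim-two-isom.thm}, using Lemma~\ref{locally-Krull-qc.thm} to transport the result through the equivariance data. The preceding Lemma~\ref{locally-Krull-qc.thm} was designed to supply exactly the quasi-coherence in the $(G,\O_X)$-category that is needed here.

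First I would check that $\varphi^*:\Ref_n(G,X)\rightarrow \Ref_n(G,U)$ is well-defined. Since $\varphi$ is an open immersion (hence flat), the underlying $\O_U$-module of $\varphi^*\M$ is reflexive of rank $n$ by Lemma~\ref{reflexive-ascent.thm} (and rank is clearly preserved), so this is routine. Next, for $\N\in\Ref_n(G,U)$, I would show $\varphi_*\N\in\Ref_n(G,X)$. Applying Lemma~\ref{locally-Krull-qc.thm} to the factorization $U\hookrightarrow X\xrightarrow{\id_X}X$ --- where the open immersion $\varphi$ is quasi-separated, $\id_X$ is quasi-compact, and $X$ is locally Krull --- gives that $\varphi_*\N$ is a quasi-coherent $(G,\O_X)$-module. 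Its underlying $\O_X$-module is $\varphi_*(\N_{[0]})$, which lies in $\Ref_n(X)$ by Lemma~\ref{codim-two-isom.thm}. Hence $\varphi_*\N\in\Ref_n(G,X)$.

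It remains to verify the unit and counit of the adjunction $(\varphi^*,\varphi_*)$ are isomorphisms in the equivariant categories. The counit $\varphi^*\varphi_*\N\rightarrow\N$ is an isomorphism simply because $\varphi$ is an open immersion. The unit $\M\rightarrow\varphi_*\varphi^*\M$ is a morphism of quasi-coherent $(G,\O_X)$-modules whose underlying $\O_X$-module map is an isomorphism by Lemma~\ref{codim-two-isom.thm}; a morphism of quasi-coherent equivariant sheaves is an isomorphism iff the underlying sheaf map is (the equivariance data are automatically compatible via base change along the structure morphisms of $B_G^M(X)$, which are flat since $G$ is flat over $S$). The statement about $\Cl(G,X)\cong \Cl(G,U)$ is then the special case $n=1$, with the group structure clearly preserved since $\varphi^*$ commutes with $(-\otimes-)^{**}$ on the underlying modules.

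The main point where care is required is the verification in the previous paragraph that $\varphi_*\N$ is a quasi-coherent $(G,\O_X)$-module, rather than merely a quasi-coherent $\O_X$-module with a compatible family of isomorphisms at higher simplicial levels. This is precisely what Lemma~\ref{locally-Krull-qc.thm} is set up to handle, via its analysis of Lipman's theta and the alpha isomorphisms along flat base change in the simplicial ringed site $\Zar(B_G^M(X))$; once that result is invoked, the remaining verifications are mechanical consequences of the non-equivariant Lemma~\ref{codim-two-isom.thm}.
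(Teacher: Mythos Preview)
Your proof is correct and follows essentially the same approach as the paper's: invoke Lemma~\ref{locally-Krull-qc.thm} (with the factorization $U\hookrightarrow X\xrightarrow{\id_X}X$) to obtain that $\varphi_*\N$ is a quasi-coherent $(G,\O_X)$-module, and then reduce everything else to the non-equivariant Lemma~\ref{codim-two-isom.thm}. The paper's proof is terser---it simply says ``the rest is easy by Lemma~\ref{codim-two-isom.thm}''---whereas you spell out the unit/counit verification and the observation that isomorphy of equivariant morphisms can be checked on underlying $\O_X$-modules.
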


\begin{proof}
By Lemma~\ref{locally-Krull-qc.thm}, $\varphi_*$ is a functor from
$\Ref(G,U)$ to $\Qch(G,X)$.
The rest is easy by Lemma~\ref{codim-two-isom.thm}.
\end{proof}

\begin{proposition}\label{pfb-cl-isom.thm}
Let $G$ be a flat $S$-group scheme, and
$\varphi:X\rightarrow Y$ a principal $G$-bundle.
Then $\varphi$ is fpqc.
If $X$ is locally Krull, then $Y$ is also locally Krull.
The equivalence $\varphi^*:\Qch(Y)\rightarrow\Qch(G,X)$ yields
an equivalence $\varphi^*:\Ref_n(Y)\rightarrow\Ref_n(G,X)$.
In particular, $\varphi^*:\Cl(Y)\rightarrow\Cl(G,X)$ is an isomorphism.
\end{proposition}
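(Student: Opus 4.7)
The plan is to verify the four assertions in order, invoking the lemmas established just above.

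\textbf{Step 1 (fpqc).} The second projection $p_2:G\times X\to X$ is the base change of the structure morphism $G\to S$ along $X\to S$, hence flat by hypothesis on $G$. Thus Lemma~\ref{pfb-fpqc.thm} applies and shows $\varphi$ is fpqc.

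\textbf{Step 2 (locally Krull).} With $\varphi$ fpqc and $X$ locally Krull, Lemma~\ref{reflexive-descent2.thm} immediately yields that $Y$ is locally Krull.

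\textbf{Step 3 (equivalence on $\Ref_n$).} Lemma~\ref{pfb-pic.thm} already supplies the equivalence $\varphi^*:\Qch(Y)\to\Qch(G,X)$. I want to show that under this equivalence the full subcategory $\Ref_n(Y)$ corresponds to $\Ref_n(G,X)$. If $\M\in\Ref_n(Y)$, then Lemma~\ref{reflexive-ascent.thm} (using that $\varphi$ is flat) gives $\varphi^*\M$ reflexive; the rank is preserved because a flat morphism sends generic points of components of $X$ to generic points of components of $Y$, and $(\varphi^*\M)_\xi=\O_{X,\xi}\otimes_{\O_{Y,\varphi(\xi)}}\M_{\varphi(\xi)}$ retains $\O_{Y,\varphi(\xi)}$-dimension. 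Conversely, if $\M\in\Qch(Y)$ satisfies $\varphi^*\M\in\Ref_n(G,X)$, then Lemma~\ref{reflexive-descent2.thm} gives reflexivity of $\M$; for the rank, use that the surjective fpqc map $\varphi$ covers every generic point $\eta$ of a component of $Y$, since any preimage of $\eta$ specializes from the generic point $\xi'$ of some component of $X$, whose image $\varphi(\xi')$ is itself a generic point of a component of $Y$ specializing to $\eta$, hence equal to $\eta$.

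\textbf{Step 4 (class groups).} Step 3 produces a bijection of isomorphism classes in $\Ref_1$, hence a bijection $\varphi^*:\Cl(Y)\to\Cl(G,X)$. To see it is a group homomorphism with respect to (\ref{addition.eq}), note that $\varphi^*$ commutes with $\otimes$ on the nose, and since $\varphi$ is flat, Lemma~\ref{flat-reflexive.thm} yields
\[
\varphi^*\bigl((\M\otimes_{\O_Y}\N)^{**}\bigr)\cong(\varphi^*\M\otimes_{\O_X}\varphi^*\N)^{**}.
\]
The only mildly delicate point is the rank bookkeeping in Step 3, but once one observes that an fpqc morphism of locally integral schemes matches up generic points of components on source and target, this reduces to the trivial fact that tensoring a finite-dimensional vector space along a field extension preserves dimension. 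Everything else is a direct application of the lemmas already in place.
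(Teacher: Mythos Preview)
Your proof is correct and follows essentially the same route as the paper's: invoke Lemma~\ref{pfb-fpqc.thm} for fpqc, Lemma~\ref{reflexive-descent2.thm} for locally Krull, Lemma~\ref{pfb-pic.thm} for the equivalence on $\Qch$, and Lemmas~\ref{reflexive-ascent.thm} and~\ref{reflexive-descent2.thm} for the restriction to $\Ref_n$. You supply more detail than the paper does on rank preservation and on why $\varphi^*$ respects the group law (\ref{addition.eq}), points the paper leaves implicit.
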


\begin{proof}
The first assertion is by Lemma~\ref{pfb-fpqc.thm}.
Assume that $X$ is locally Krull.
Then $Y$ is locally Krull by Lemma~\ref{reflexive-descent2.thm}.
The equivalence $\varphi^*:\Qch(Y)\rightarrow\Qch(G,X)$ is
by Lemma~\ref{pfb-pic.thm}.
For $\M\in\Qch(Y)$, $\M\in\Ref_n(Y)$ if and only if $\varphi^*\M\in\Ref_n(G,X)$
by Lemma~\ref{reflexive-ascent.thm} and Lemma~\ref{reflexive-descent2.thm}.
The last assertion is now trivial.
\end{proof}

\begin{proposition}\label{Cl-Pic.thm}
Let $Y$ be a quasi-compact locally Krull scheme.
Then $\Cl(Y)\cong \indlim \Pic(U)$, where the inductive limit is
taken over all open subsets $U$ such that $\codim_Y(Y\setminus U)\geq 2$.
\end{proposition}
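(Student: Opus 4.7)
I would construct mutually inverse maps between $\indlim\Pic(U)$ and $\Cl(Y)$. For each open $U\subset Y$ with $\codim_Y(Y\setminus U)\geq 2$ and $j_U:U\hookrightarrow Y$ the inclusion, Corollary~\ref{codim-two-ref.thm} (with trivial $G$) gives an isomorphism $(j_U)_*:\Cl(U)\xrightarrow{\sim}\Cl(Y)$ inverse to $j_U^*$, while $\Pic(U)\subset\Cl(U)$ since invertible sheaves are rank-one reflexive. The composite $\Phi_U:\Pic(U)\to\Cl(Y)$ is a group homomorphism. For $U\subset U'$ and $\L\in\Pic(U')$, applying $j_{U'}^*$ to either $(j_{U'})_*\L$ or $(j_U)_*(\L|_U)$ returns $\L$, so by the equivalence these agree in $\Cl(Y)$; the $\Phi_U$ are therefore compatible with restriction and yield $\Phi:\indlim\Pic(U)\to\Cl(Y)$. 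Injectivity is automatic, since $(j_U)_*\L\cong\O_Y$ forces $\L=j_U^*(j_U)_*\L\cong\O_U$.

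The substantive task is surjectivity: given a rank-one reflexive $\M$ on $Y$, produce a quasi-compact open $V$ with $\codim_Y(Y\setminus V)\geq 2$ such that $\M|_V$ is invertible. Cover $Y$ by finitely many affines $Y_i=\Spec A_i$ with $A_i$ Krull, and set $M_i:=\Gamma(Y_i,\M)$. Viewing $M_i$ as a divisorial fractional ideal (after clearing a denominator) corresponding to a Weil divisor with \emph{finite} support $\{P_1,\ldots,P_r\}\subset X^1(A_i)$, I choose finitely many $g_0,\ldots,g_m\in M_i$ so that $v_{P_j}(g_j)=v_{P_j}(M_i)$ for each $j$ and, for each of the finitely many other height-one primes $Q$ where a fixed nonzero $g_0\in M_i$ has positive valuation, some $g_k$ satisfies $v_Q(g_k)=0$. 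The finitely generated submodule $M_i':=(g_0,\ldots,g_m)\subset M_i$ then agrees with $M_i$ at every height-one prime, so Lemma~\ref{intersection.thm} gives $(M_i')^{**}=M_i$. Setting $(M_i')^{-1}:=\{x\in Q(A_i):xM_i'\subset A_i\}$ and $J_i:=M_i'\cdot(M_i')^{-1}\subset A_i$, the sheafified $M_i'$ is invertible precisely on $D(J_i)$, and $V(J_i)$ contains no prime of height $\leq 1$. Lemma~\ref{codim-two.thm} supplies a quasi-compact open $V_i\subset D(J_i)$ with $\codim_{Y_i}(Y_i\setminus V_i)\geq 2$; Lemma~\ref{lattice-flat.thm} makes reflexive hull commute with localization at each $P\in V_i$, yielding $(M_i)_P=((M_i')_P)^{**}=(M_i')_P$ (the last equality since $(M_i')_P$ is free, hence reflexive). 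Thus $\M|_{V_i}$ is invertible. Taking $V:=\bigcup_i V_i$ yields a finite union of quasi-compact opens---hence quasi-compact---on which $\M$ is invertible, and $\codim_Y(Y\setminus V)\geq 2$ because any point outside $V$ lies in some $Y_i\setminus V_i$.

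The hard part is this local step. Since $A_i$ is only Krull, $M_i$ need not be finitely generated and the usual Noetherian argument for openness of the free locus is unavailable. The workaround is the finitely generated approximation $M_i'\subset M_i$ with the same reflexive hull---available because a divisor on a Krull domain has finite support---combined with the fact that reflexive hull commutes with localization on Krull rings (Lemma~\ref{lattice-flat.thm}), which lets the invertibility of $M_i'$ on $V_i$ transfer to $\M$ itself.
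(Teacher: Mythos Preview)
Your proof is correct and follows the same overall strategy as the paper: both reduce to showing that any rank-one reflexive sheaf becomes invertible after removing a closed subset of codimension $\geq 2$, and both handle this affine-locally on $\Spec A$ with $A$ Krull. The difference lies only in the local construction. The paper takes a divisorial ideal $I\subset A$, picks $a\in I\setminus 0$, lets $P_1,\ldots,P_r$ be the minimal primes of $Aa$, chooses $b_i\in I$ with $v_{P_i}(b_i)=v_{P_i}(I)$, and sets $J=Aa+\sum_i(Ab_i:I)$; it then checks directly that $\tilde I$ is principal on $D(Aa)$ (trivially) and equals $(Ab_i)\,\tilde{}\,$ on $D(Ab_i:I)$, so $\tilde I$ is invertible on $D(J)$, which has codimension $\geq 2$ complement. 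Your route instead builds a finitely generated $M_i'\subset M_i$ agreeing with $M_i$ at every height-one prime, uses the invertibility locus $D(J_i)$ of the finitely generated $M_i'$, and then invokes the commutation of double dual with localization (Lemma~\ref{lattice-flat.thm}) to transfer invertibility from $M_i'$ back to $M_i$. The paper's construction is marginally more direct---it exhibits explicit local generators of $I$ itself and so bypasses the appeal to commutation of reflexive hull with localization---while your finitely-generated-approximation viewpoint is a bit more conceptual and makes the role of finiteness of divisorial support more visible. Either way the content is the same.
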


\begin{proof}
By Corollary~\ref{codim-two-ref.thm} for the case that $G$ is trivial,
the map $\Cl(Y)\rightarrow \indlim \Cl(U)$ is an isomorphism.
So it suffices to show that the canonical map $\indlim \Pic(U)\rightarrow
\indlim \Cl(U)$ is surjective, as the injectivity is obvious.
This amounts to show that, for each $U$ and a rank-one reflexive sheaf 
$\M$ over $U$, there exists some open subset $V$ of $U$ such that 
$\codim_U (U\setminus V)\geq 2$ and $\M|_V$ is an invertible sheaf.

By Lemma~\ref{codim-two.thm}, there exists some quasi-compact 
open subset $U'$ of $U$ such that $\codim_U(U\setminus U')\geq 2$.
Replacing $U$ by $U'$, we may assume that $U$ is quasi-compact.
Then $U=\bigcup_i \Spec A_i$ with $A_i$ a Krull domain.
Replacing $U$ by each $\Spec A_i$, we may
assume that $U=\Spec A$ is affine with $A$ a Krull domain.
Set $I:=\Gamma(U,\M)$.
We may assume that $I$ is a divisorial ideal of $A$.
Take $a\in I\setminus \{0\}$.
Let $\{P_1,\ldots,P_r\}$ be the set of minimal primes of $Aa$.
We may assume that $P_i\neq P_j$ for $i\neq j$.
Let $1\leq i\leq r$.
Set $IA_{P_i}=P_i^{v_i}A_{P_i}$.
For each $i$, take $b_i\in I\setminus P_i^{v_i+1}A_{P_i}$.
Set $J=Aa+\sum_{i=1}^r(Ab_i:I)$.
If $P\neq P_i$ for any $i$, $J_P=A_P$, since $(Aa)_P=A_P$.
Moreover, $J_{P_i}=A_{P_i}$, since $(Ab_i:I)_{P_i}=(Ab_i)_{P_i}:I_{P_i}=A_{P_i}$.
Set $V=D(J)=U\setminus V(J)$.
Then $\codim_U(U\setminus V)\geq 2$.
On $D(Aa)$, $\tilde I|_{D(Aa)}=\tilde A|_{D(Aa)}$ is an invertible sheaf,
where $D(Aa)=\Spec A\setminus V(Aa)$.
On $D(Ab_i:I)$, $\tilde I|_{D(Ab_i:I)}=(Ab_i)\,\tilde{}\,|_{D(Ab_i:I)}$ 
is an invertible sheaf.
Thus $\tilde I$ is invertible on $V$, and we are done.
\end{proof}

\begin{lemma}\label{invariance-reflexive.thm}
Let $G$ be a flat $S$-group scheme, 
and $X$ be a locally Krull $S$-scheme on which $G$ acts trivially.
Let $\M\in\Ref(G,X)$.
Then $\M^G\in\Ref(G,X)$.
\end{lemma}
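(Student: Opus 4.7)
The question is local on $X$, so we reduce to the case $X = \Spec A$ with $A$ a Krull domain, and let $M = \Gamma(X, \M)$, a reflexive $A$-module. Since $G$ acts trivially on $X$, the action map $G \times_S X \to X$ coincides with the projection $p_2$, so the equivariant structure on $\M$ amounts to an $\O_{G\times_S X}$-linear automorphism $\alpha : p_2^* \M \to p_2^* \M$ (the coaction). The invariant subsheaf $\M^G$ is by definition the equalizer of the two $\O_X$-linear maps $\eta,\, \eta' : \M \to p_{2*}p_2^*\M$, where $\eta$ is the unit of adjunction and $\eta' = p_{2*}(\alpha) \circ \eta$ is its twist by the coaction. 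Locally, this gives $M^G = \ker(\eta - \eta' : M \to N)$, where $N := \Gamma(G \times_S X, p_2^*\M)$ is regarded as an $A$-module through the structural map $A \to \Gamma(G \times_S X, \O_{G \times_S X})$.

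The key step is to show that $N$ is torsion-free as an $A$-module. Since $G$ is flat over $S$, the base change $p_2 : G \times_S X \to X$ is flat, so for any $f \in A \setminus 0$, multiplication by $p_2^*(f)$ on $p_2^*\M$ is injective (because $f$ is a nonzerodivisor on the torsion-free sheaf $\M$). The left exactness of global sections then forces multiplication by $f$ on $N$ to be injective. Applying Corollary~\ref{second-syzygy.thm} to the left exact sequence
\[
0 \to M^G \to M \xrightarrow{\eta - \eta'} N,
\]
with $M$ reflexive over the Krull domain $A$ and $N$ torsion-free, we conclude that $M^G$ is reflexive.

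Finally, $\M^G$ carries a natural $(G, \O_X)$-module structure: by construction the coaction $\alpha$ restricts to the identity on $p_2^*\M^G$, so the equivariant datum on $\M^G$ is trivial, which makes sense because $G$ acts trivially on $X$. Hence $\M^G \in \Ref(G, X)$. The main obstacle is the bookkeeping with the invariants functor, and in particular the verification that $N$ is torsion-free, where the flatness hypothesis on $G$ is used essentially; the appeal to Corollary~\ref{second-syzygy.thm} then delivers reflexivity cleanly. A secondary, routine point is the compatibility of the affine-local kernels with restriction to smaller affine opens, which follows because $\eta - \eta'$ is a morphism of sheaves and localization is flat.
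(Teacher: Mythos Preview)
Your proof is correct and follows essentially the same line as the paper's: set up the equalizer sequence $0 \to \M^G \to \M \to p_*p^*\M$, observe that the target is torsion-free because $p$ is flat, and invoke Corollary~\ref{second-syzygy.thm}. You have simply made the affine reduction and the two parallel maps $\eta,\eta'$ explicit, and added the remark about the (trivial) $G$-structure on $\M^G$, which the paper leaves implicit.
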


\begin{proof}
Let $p:G\times X\rightarrow X$ be the second projection.
There is an exact sequence
\[
0\rightarrow \M^G \xrightarrow{i} \M \rightarrow p_*p^*\M.
\]
By Lemma~\ref{second-syzygy.thm}, it suffices to show that
the cokernel $\C$ of $i$ is torsion-free.
As $p$ is flat and $\C$ is a subsheaf of $p_*p^*\M$, this is easy.
\end{proof}

\section{The class group of an invariant subring}

\begin{lemma}\label{finite-direct-Krull.thm}
Let $X$ be a quasi-compact locally Krull scheme, and $U$ its open subscheme.
Then $\Gamma(U,\O_U)$ is a finite direct product of Krull domains.
\end{lemma}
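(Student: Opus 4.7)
The plan is to reduce to the integral case and then realize $\Gamma(U,\O_U)$ as an intersection of DVRs inside a common field, checking the Krull finiteness axiom by hand. Since $X$ is quasi-compact and locally integral, the decomposition mentioned in~(\ref{lattice.par}) gives $X=X_1\sqcup\cdots\sqcup X_r$ with $r<\infty$ and each $X_j$ integral, quasi-compact, and locally Krull. Writing $U_j:=U\cap X_j$, we have $\Gamma(U,\O_U)\cong\prod_{j=1}^{r}\Gamma(U_j,\O_{U_j})$, so it suffices to treat the case that $X$ is integral.

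Assume then that $X$ is quasi-compact integral locally Krull with function field $K=\O_{X,\eta}$, and fix a finite affine cover $X=\bigcup_{i=1}^n \Spec A_i$ with each $A_i$ a Krull domain of fraction field $K$. For an open $W\subset \Spec A_i$, $\Gamma(W,\O)$ equals $\bigcap_{P\in W}(A_i)_P$ as a subring of $K$; using the fact that a localization of a Krull domain is Krull, together with $(A_i)_P=\bigcap_{Q\in X^1(A_i),\,Q\subset P}(A_i)_Q$ (cf.\ Lemma~\ref{intersection.thm}) and the stability of opens in an affine scheme under generization, this collapses to $\bigcap_{Q\in X^1(A_i)\cap W}(A_i)_Q$. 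Applying this with $W=U\cap\Spec A_i$ and intersecting over $i$ yields
\[
R:=\Gamma(U,\O_U)=\bigcap_{\xi}\O_{U,\xi}
\]
inside $K$, where $\xi$ runs over points of $U$ with $\dim\O_{U,\xi}=1$; each such local ring is a DVR.

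It remains to check the finiteness axiom: for any nonzero $f\in R$, only finitely many such $\xi$ satisfy $v_\xi(f)>0$. The set of these $\xi$ is covered by the finitely many sets $X^1(A_i)\cap U$, so it suffices to fix $i$ and bound the height-one primes $Q$ of $A_i$ lying in $U$ with $v_Q(f)>0$. Writing $f=a/b$ with $a,b\in A_i$ and $b\neq 0$, the condition $v_Q(f)>0$ forces $v_Q(a)>v_Q(b)\geq 0$, in particular $a\in Q$, and the Krull property of $A_i$ leaves only finitely many such $Q$. Hence $R$ is a Krull domain. The main obstacle is that $U$ need not itself be quasi-compact, so one cannot argue via a finite affine cover of $U$; instead one must leverage the finite affine cover of $X$ to localize the finiteness bound one chart at a time.
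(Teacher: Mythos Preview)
Your proof is correct and takes a genuinely different route from the paper's. After the same reduction to integral $X$, the paper does not work with the possibly non-quasi-compact $U$ directly: it invokes Lemma~\ref{codim-two.thm} to find a quasi-compact open $V\subset U$ with $\codim_U(U\setminus V)\geq 2$, observes (via Lemma~\ref{codim-two-isom.thm}) that $\Gamma(U,\O_U)=\Gamma(V,\O_V)$, and then writes $\Gamma(V,\O_V)$ as a \emph{finite} intersection of Krull subrings of $K$ coming from a finite affine cover of $V$. Your argument instead keeps $U$ as is, exhibits $\Gamma(U,\O_U)$ as the intersection of the DVRs at all codimension-one points of $U$, and checks the finite-character axiom chart by chart using the finite affine cover of $X$. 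Your approach is more self-contained---it does not rely on the codimension-two machinery of Lemmas~\ref{codim-two-isom.thm} and~\ref{codim-two.thm}---and makes the source of finiteness explicit; the paper's approach is shorter once those lemmas are available and, in particular, shows that the ``obstacle'' you name in your last sentence can be sidestepped rather than confronted.
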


\begin{proof}
As $U$ is a finite direct product of integral schemes, we may assume
that $U$ is integral.
By Lemma~\ref{codim-two.thm}, we can take a quasi-compact 
open subset $V$ of $U$ such that $\codim_U(U\setminus V)\geq 2$.
Replacing $U$ by $V$, we may assume that $U$ itself is quasi-compact.
If $U=\bigcup_{i=1}^n U_i$ with $U_i$ affine, then
$\Gamma(U,\O_U)=\bigcap_{i=1}^n \Gamma(U_i,\O_{U_i})$ with each 
$\Gamma(U_i\O_{U_i})$ a Krull domain, and hence $U$ is also a 
Krull domain.
\end{proof}

\paragraph 
Let $G$ be a flat $S$-group scheme.
Let $X$ be a quasi-compact quasi-separated locally Krull $G$-scheme,
and let $\varphi:X\rightarrow Y$ be a $G$-invariant morphism such that
$\O_Y\rightarrow(\varphi_*\O_X)^G$ is an isomorphism.

\begin{lemma}\label{Y-Krull.thm}
$Y$ is a locally Krull scheme.
Each irreducible component of $X$ is mapped dominatingly to an 
irreducible component of $Y$.
In particular, $Y$ has only finitely many irreducible components.
Moreover, there exists some quasi-compact open subset $U$ of $Y$ 
such that $\codim_Y (Y\setminus U)\geq 2$.
\end{lemma}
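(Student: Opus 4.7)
My plan is to prove all four assertions by an affine-local analysis on $Y$. I first reduce to an affine open $V = \Spec B \subseteq Y$ and write $B = A^G$ with $A = \Gamma(\varphi^{-1}(V), \O_X)$; by Lemma~\ref{finite-direct-Krull.thm}, $A = \prod_{i=1}^n A_i$ is a finite direct product of Krull domains whose factors correspond to the finitely many clopen integral components of $\varphi^{-1}(V)$. The $G$-action permutes these components, partitioning $\{1,\ldots,n\}$ into orbits $O_1,\ldots,O_r$. Setting $A_\alpha := \prod_{i\in O_\alpha} A_i$, each $A_\alpha$ is $G$-stable, so $B = \prod_\alpha A_\alpha^G$. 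For any $i_0 \in O_\alpha$, projection to the $i_0$-th coordinate embeds $A_\alpha^G$ injectively into the Krull domain $A_{i_0}$ (transitivity of the $G$-action on the idempotents $\{e_i : i \in O_\alpha\}$ prevents any nonzero invariant from being killed by the single idempotent $e_{i_0}$), and one checks that $A_\alpha^G = A_{i_0} \cap Q(A_\alpha^G)$ inside $Q(A_{i_0})$, so $A_\alpha^G$ is itself a Krull domain. Hence $B$ is a finite product of Krull domains and $Y$ is locally Krull.

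For the dominance and finiteness of irreducible components: since $X$ is qc locally integral, it is a finite disjoint union of integral clopen components $X_1,\ldots,X_N$. Each image $\varphi(X_i)$ is irreducible, hence lies in a unique irreducible component $Y_{j(i)}$ of $Y$. Applying the local analysis above around the generic point of $Y_{j(i)}$, the inclusion $A_\alpha^G \hookrightarrow A_{i_0}$ is dominant on Spec, so the generic point of $X_i$ maps to the generic point of $Y_{j(i)}$, giving dominance. Every irreducible component of $Y$ arises this way (each appears locally as some $\Spec A_\alpha^G$), so $Y$ has at most $N$ irreducible components.

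For the final quasi-compact open: I claim every codimension-$\leq 1$ point of $Y$ lies in $\varphi(X)$. Codimension zero follows from dominance. For a codimension-one point $y$, work locally at an affine $V = \Spec B$ with $y$ corresponding to a height-one prime $\mathfrak{p}$; the claim reduces to lifting $\mathfrak{p}$ along the inclusion $A_\alpha^G \hookrightarrow A_{i_0}$ of Krull domains, which follows from the identity $A_\alpha^G = A_{i_0}\cap Q(A_\alpha^G)$ together with the standard Krull-theoretic correspondence between height-one primes and essential discrete valuations (cf.~\cite[Ch.~IV]{Fossum}). Once this claim is proved, take any affine open cover of $Y$; by quasi-compactness of $X$, finitely many members $V_1,\ldots,V_m$ have preimages covering $X$, so $U := V_1 \cup \cdots \cup V_m$ is a qc open of $Y$ containing $\varphi(X)$, and $Y \setminus U \subseteq Y \setminus \varphi(X)$ has codimension $\geq 2$.

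The delicate step is the codimension-one lifting. Outside reductive or finite-group settings (where a Reynolds operator or ordinary Galois theory is available), one must argue using only the Krull-theoretic structure of $A^G \hookrightarrow A$; verifying that $\mathfrak{p}A_{i_0} \neq A_{i_0}$---so that a minimal prime over it exists and is automatically of height one in the Krull domain $A_{i_0}$---is the crucial calculation, and it is here that the identity $A_\alpha^G = A_{i_0} \cap Q(A_\alpha^G)$ plays its essential role via the valuation-theoretic characterization of Krull domains.
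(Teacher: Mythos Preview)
Your overall architecture matches the paper's: reduce to an affine open of $Y$, identify the ring of sections on the $X$-side as a finite product of Krull domains via Lemma~\ref{finite-direct-Krull.thm}, and build the desired $U$ as a finite union of affine opens whose preimages cover $X$. However, there is a genuine gap in the last step.

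The reduction ``$y\in\varphi(X)$ reduces to lifting $\mathfrak p$ along $A_\alpha^G\hookrightarrow A_{i_0}$'' is not justified. Lifting $\mathfrak p$ to a height-one prime $Q$ of $A_{i_0}$ gives a point of $\Spec A_{i_0}$, but $A_{i_0}=\Gamma(X_{i_0},\O_{X_{i_0}})$ for a component $X_{i_0}$ of $\varphi^{-1}(V)$ that is \emph{not assumed affine}; the natural map $X_{i_0}\to\Spec A_{i_0}$ need not hit $Q$, so you have not produced a point of $X$ over $y$. The paper avoids this problem by arguing the contrapositive on the scheme level: assuming $y\in Y\setminus U$ has codimension one (so $\varphi^{-1}(y)=\emptyset$), the uniformizer $t$ of $\O_{Y,y}$ becomes a unit on $X'':=\Spec\O_{Y,y}\times_Y X$, and one computes $\Gamma(X'',\O_{X''})=B_P$ by passing to a quasi-compact open $W\subset X'$ with codimension-$\geq 2$ complement (this computation is exactly where the non-affineness of $X'$ is absorbed). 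Then $t^{-1}\in B_P\cap Q(A_P)=A_P$, a contradiction. Your proposal never confronts the step $\Gamma(X'',\O_{X''})=B_P$, which is the genuine content here.

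A secondary point: your ``$G$ permutes the components, partitioning into orbits'' and the accompanying ``transitivity'' argument are phrased as if $G$ were an abstract group acting on the finite set of idempotents. For an arbitrary flat $S$-group scheme this language is not directly available. The claims themselves are salvageable---one can define the ``orbits'' as atoms of the Boolean algebra of $G$-invariant idempotents, and the injectivity $A_\alpha^G\hookrightarrow A_{i_0}$ follows because the support of a $G$-invariant section is a $G$-stable clopen---but this requires work you have not supplied. The paper bypasses this entirely: from the equalizer sequence $0\to A\to B\xrightarrow{u-v}C$ it extracts directly (citing \cite[(32.6)]{ETI}) that nonzerodivisors of $A$ remain nonzerodivisors in $B$ and that $A=Q(A)\cap B$, which is all one needs.
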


\begin{proof}
Let $Y'=\Spec A$ be an affine open subscheme of $Y$,
$X'=\varphi^{-1}(Y')$, and $\varphi':X'\rightarrow Y'$
be the induced map.

Let $B=\Gamma(X',\O_{X'})$.
Note that $B$ is a finite direct product of Krull domains by
Lemma~\ref{finite-direct-Krull.thm}.
Note also that the sequence
\begin{equation}\label{ABG.eq}
0\rightarrow A \rightarrow B \xrightarrow{u-v}C
\end{equation}
is exact, where $C=\Gamma(G\times X',\O_{G\times X'})$, 
and $u=u(a)$ and $v=u(p_2)$ are the maps $B=\Gamma(X',\O_{X'})
\rightarrow \Gamma(G\times X',\O_{G\times X'})=C$ corresponding to the action 
$a$ and the second projection $p_2$, respectively.
As in the proof of \cite[(32.6)]{ETI}, a nonzerodivisor of $A$ 
is a nonzerodivisor of $B$, $A=Q(A)\cap B$, and 
hence $A$ is a finite direct product of Krull domains.
Also, as any nonzerodivisor of $A$ is a nonzerodivisor of $B$,
any irreducible component of $X$ is mapped dominatingly to $Y$.

We prove the last assertion.
Let $Y=\bigcup_\lambda U_\lambda$ be an affine open covering.
Then by the quasi-compactness of $X$, there are finitely many 
$\lambda_1,\ldots,\lambda_n$ such that $X=\bigcup_i \varphi^{-1}(U_{\lambda_i})$.
Set $U=\bigcup_i U_{\lambda_i}$.
We prove that $\codim_Y(Y\setminus U)\geq 2$.
Assume the contrary, and take $y\in Y\setminus U$ such that $\O_{Y,y}$ is 
a DVR.
Take an affine open neighborhood $Y'=\Spec A$
and let $X':=Y'\times_Y X$.
Then we have the exact sequence (\ref{ABG.eq}) with
$B=\Gamma(X',\O_{X'})$ and $C=\Gamma(G\times X',\O_{X'})$.
Set $Y''=\Spec A_P=\Spec \O_{Y,y}$, where $P$ is the height-one prime ideal
of $A$ corresponding to $y$.
Then plainly,
\[
0\rightarrow A_P\rightarrow B_P\xrightarrow{u-v} C_P
\]
is exact.
Let $t$ be the prime element of $A_P$.
As $\varphi^{-1}(y)$ is empty, $t\O_{X''}=\O_{X''}$, where $X''=Y''\times_Y X$.
Thus $t\in\Gamma(X'',\O_{X''})^\times$.
As there is a quasi-compact open subset $W$ of $X'$ with $\codim_{X'}(X'
\setminus W)\geq 2$, 
\[
t^{-1}\in\Gamma(X'',\O_{X''})=\Gamma(Y''\times_{Y'}W,\O_{Y''\times_{Y'}W})
=\Gamma(W,\O_W)_P=B_P.
\]
So $t^{-1}\in B_P\cap Q(A)=A_P$, and this is a contradiction.
\end{proof}

\begin{lemma}\label{subquotient.thm}
The class group $\Cl(Y)$ of $Y$ is a subquotient of $\Cl(G,X)$.
\end{lemma}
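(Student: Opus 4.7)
The plan is to realize $\Cl(Y)$ as a subgroup of a quotient of $\Cl(G,X)$ by passing to an inductive limit over codimension-two opens of $Y$. By Lemma~\ref{Y-Krull.thm} and Lemma~\ref{codim-two-isom.thm} we may replace $Y$ by a quasi-compact open with codimension-two complement, so that Proposition~\ref{Cl-Pic.thm} gives $\Cl(Y)\cong\indlim_V\Pic(V)$, where $V$ runs over open subsets of $Y$ with $\codim_Y(Y\setminus V)\geq 2$. For each such $V$, set $U:=\varphi^{-1}(V)$; this is a $G$-stable open of $X$, and the restriction $\varphi|_U\colon U\to V$ satisfies $\O_V\cong((\varphi|_U)_*\O_U)^G$, since both $\varphi_*$ and $(-)^G$ commute with restriction to an open. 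Lemma~\ref{pic-injective.thm} applied to $\varphi|_U$ then yields an injection $\Pic(V)\hookrightarrow\Pic(G,U)\subset\Cl(G,U)$, and passing to the filtered colimit (which preserves injectivity of abelian groups) produces an injection
\[
\Cl(Y)\;\hookrightarrow\;\indlim_V\Cl(G,\varphi^{-1}(V)).
\]

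Next I would show that for every $G$-stable open $U\subset X$ containing the generic points of $X$, the restriction homomorphism $\Cl(G,X)\to\Cl(G,U)$ is surjective. Given $[\M]\in\Cl(G,U)$, form $(j_*\M)^{**}$, where $j\colon U\hookrightarrow X$. By Lemma~\ref{locally-Krull-qc.thm} the pushforward $j_*\M$ is a quasi-coherent $(G,\O_X)$-module; double dualizing then gives a reflexive $(G,\O_X)$-module via Corollary~\ref{hom.thm}, with equivariance preserved because the action map $a\colon G\times X\to X$ is flat. The sheaf $(j_*\M)^{**}$ has rank one since $U$ contains all generic points of $X$, and restriction to the open $U$ commutes with $\uHom$, whence $((j_*\M)^{**})|_U\cong\M^{**}=\M$, so surjectivity follows. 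Assembling these maps, the inductive limit of surjections gives a surjection $\Cl(G,X)\twoheadrightarrow\indlim_V\Cl(G,\varphi^{-1}(V))$, whose kernel
\[
K\;=\;\bigcup_V\Ker\bigl(\Cl(G,X)\to\Cl(G,\varphi^{-1}(V))\bigr)
\]
identifies $\indlim_V\Cl(G,\varphi^{-1}(V))\cong\Cl(G,X)/K$. Combining with the previous injection exhibits $\Cl(Y)$ as a subgroup of $\Cl(G,X)/K$, hence as a subquotient of $\Cl(G,X)$.

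The main obstacle is that one is genuinely forced into a subquotient rather than a subgroup: if $\varphi$ contracts a prime divisor of $X$ to a subset of $Y$ of codimension at least two, then $\varphi^{-1}(V)$ need not have codimension-two complement in $X$, so the restriction $\Cl(G,X)\to\Cl(G,\varphi^{-1}(V))$ need not be injective and $K$ is nontrivial. Accordingly, the technical heart of the argument is the careful verification that $(j_*\M)^{**}$ is a well-defined rank-one reflexive $(G,\O_X)$-module with the correct restriction to $U$, which rests on the compatibility of reflexivization with flat base change (via the $G$-action) and with restriction to open subsets.
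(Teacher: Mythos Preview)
Your overall architecture matches the paper's: form the inductive system $\indlim_V \Cl(G,\varphi^{-1}(V))$ over opens $V\subset Y$ with codimension-two complement, inject $\Cl(Y)$ into it via Lemma~\ref{pic-injective.thm} on each level, and relate the limit to a quotient of $\Cl(G,X)$. The injection step is fine and is exactly how the paper argues.

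The gap is in your surjectivity step. You claim that for $\M\in\Ref_1(G,U)$ the sheaf $(j_*\M)^{**}$ is a rank-one reflexive $(G,\O_X)$-module restricting to $\M$. This fails precisely in the situation you yourself flag: when $X\setminus U$ contains a prime divisor. Take $X=\Spec A$ with $A$ a Krull domain, $U=D(f)$ for some non-unit $f$, and $\M=\O_U$. Then $j_*\M$ corresponds to the $A$-module $A[1/f]$, which is not a lattice (so Corollary~\ref{hom.thm} does not apply), and $\Hom_A(A[1/f],A)=0$ since any image lies in $\bigcap_n f^nA=0$. Hence $(j_*\M)^{**}=0$, which is neither rank one nor restricts to $\M$. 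The slogan ``$U$ contains the generic points'' is not enough; the double-dual-of-pushforward trick only works when $\codim_X(X\setminus U)\geq 2$ (this is the content of Lemma~\ref{codim-two-isom.thm}).

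The paper sidesteps this entirely: it never claims the restriction $\Cl(G,X)\to\indlim_V\Cl(G,\varphi^{-1}(V))$ is surjective. Instead it lets $h$ denote this map and defines $\nu:\Cl(Y)\to\Image h$ by $\nu[\M]=h[(\varphi^*\M)^{**}]$, which lands in $\Image h$ tautologically. Since $\Image h$ is already a quotient of $\Cl(G,X)$, checking that $\nu$ is an injective homomorphism (via Lemma~\ref{pic-injective.thm} on a level where $\M|_V$ is invertible, exactly as you do) finishes the proof. Your argument is repaired by replacing the target $\indlim_V\Cl(G,\varphi^{-1}(V))$ with $\Image h$ and deleting the surjectivity paragraph.
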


\begin{proof}
By Lemma~\ref{Y-Krull.thm}, there exists some quasi-compact open subset 
$Y'$ of $Y$ such that $\codim_Y(Y\setminus Y')\geq 2$.

Let $h:\Cl(G,X)\rightarrow\indlim \Cl(G,\varphi^{-1}(U))$ be the canonical
map, where the inductive limit is taken over all open subset $U$ of $Y'$
such that $\codim_Y(Y\setminus U)\geq 2$.
Let $\nu:\Cl(Y)\rightarrow \Image h$ be the map defined by 
$\nu[\M]=h[(\varphi^*\M)^{**}]$.
As $\M|_U$ is an invertible sheaf for some $U$, it is easy to see that
$\nu$ is a group homomorphism.
If $\nu[\M]=0$, then $\M|_U$ is an invertible sheaf and $\varphi^*(\M|_U)$ is
trivial for some $U$.
By Lemma~\ref{pic-injective.thm}, $\M|_U$ is trivial, and by
Proposition~\ref{Cl-Pic.thm}, $[\M|_{Y'}]=0$ in $\Cl(Y')$.
By Corollary~\ref{codim-two-ref.thm}, $[\M]=0$ in $\Cl(Y)$.
This shows that $\nu$ is injective, and $\Cl(Y)$ is a subquotient of
$\Cl(G,X)$.
\end{proof}

\begin{theorem}\label{main2.thm}
Let $k$ be a field, $G$ a smooth $k$-group scheme of finite type,
and $X$ a quasi-compact quasi-separated locally Krull $G$-scheme.
Assume that there is a $k$-scheme $Z$ of finite type and a
dominating $k$-morphism $Z\rightarrow X$.
Let $\varphi:X\rightarrow Y$ be a $G$-invariant morphism such that
$\O_Y\rightarrow (\varphi_*\O_X)^G$ is an isomorphism.
Then $Y$ is locally Krull.
If, moreover, $\Cl(X)$ is finitely generated, then $\Cl(G,X)$ and $\Cl(Y)$ are
also finitely generated.
\end{theorem}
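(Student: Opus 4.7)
The plan is to assemble the ingredients that have already been set up in sections~3--5 and invoke Theorem~\ref{main.thm} as a black box. The local Krullness of $Y$ is not really new: it is exactly the first assertion of Lemma~\ref{Y-Krull.thm}, so the only substantive task is to deduce finite generation of $\Cl(G,X)$ and then of $\Cl(Y)$ under the hypothesis that $\Cl(X)$ is finitely generated.

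First I would observe that $X$ is automatically reduced, since a locally Krull scheme is locally integral by (\ref{lattice.par}). Consequently the hypotheses of Theorem~\ref{main.thm} are all met by $k$, $G$, and $X$, together with the dominating morphism $Z\to X$. Therefore $\Ker(\rho:\Pic(G,X)\to\Pic(X))$ is a finitely generated abelian group.

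Next I would chase the commutative diagram
\[
\xymatrix{
0 \ar[r] & \Ker\rho \ar[r] \ar@{=}[d] & \Pic(G,X) \ar[r]^\rho \ar@{^{(}->}[d]
 & \Pic(X) \ar@{^{(}->}[d] \\
0 \ar[r] & \Ker\alpha \ar[r] & \Cl(G,X) \ar[r]^\alpha & \Cl(X)
}
\]
in which the vertical arrows are the inclusions of Picard into class groups (using that $X$ is locally Krull) and the horizontal rows are exact by definition. The equality $\Ker\rho=\Ker\alpha$, already noted just after Theorem~\ref{main.thm} in the introduction, is immediate: a reflexive rank-one $(G,\O_X)$-module whose underlying $\O_X$-module is trivial is in particular invertible, so lies in $\Pic(G,X)$. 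Hence $\Ker\alpha$ is finitely generated by the preceding paragraph. Since $\Cl(X)$ is finitely generated by hypothesis and $\Image\alpha\subset\Cl(X)$ is therefore finitely generated, the extension
\[
0\rightarrow \Ker\alpha\rightarrow\Cl(G,X)\rightarrow\Image\alpha\rightarrow 0
\]
shows that $\Cl(G,X)$ is finitely generated.

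Finally, Lemma~\ref{subquotient.thm} realises $\Cl(Y)$ as a subquotient of $\Cl(G,X)$, and a subquotient of a finitely generated abelian group is finitely generated; this gives the finite generation of $\Cl(Y)$ and completes the proof. The only nontrivial input is Theorem~\ref{main.thm}; everything else is bookkeeping with the exact sequences already established in sections~3 and~5, so I do not expect any essential obstacle here.
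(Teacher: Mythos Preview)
Your proof is correct and follows essentially the same approach as the paper's: invoke Lemma~\ref{Y-Krull.thm} for local Krullness, use $\Ker\alpha=\Ker\rho$ together with Theorem~\ref{main.thm} to get finite generation of $\Cl(G,X)$, and finish with Lemma~\ref{subquotient.thm}. You even supply a detail the paper leaves implicit, namely that $X$ is reduced (being locally Krull) so that Theorem~\ref{main.thm} applies; the only minor inaccuracy is that the equality $\Ker\alpha=\Ker\rho$ is recorded in (\ref{equivariant-class.par}), not in the introduction.
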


\begin{proof}
$Y$ is locally Krull by Lemma~\ref{Y-Krull.thm}.
We prove the last assertion.
If $\Cl(X)$ is finitely generated, then $\Cl(G,X)$ is also finitely generated,
since the kernel of the canonical map $\alpha:\Cl(G,X)\rightarrow
\Cl(X)$ agrees with $\Ker\rho$, which is finitely generated by
Theorem~\ref{main.thm}.
As $\Cl(Y)$ is a subquotient of $\Cl(G,X)$, it is also finitely generated.
\end{proof}

\begin{remark}
A similar result can be found in \cite{Waterhouse}.
\end{remark}

Finally, as a normal scheme of finite type over $k$ is quasi-compact
quasi-separated locally Krull (and is dominated by some scheme of
finite type), we have

\begin{corollary}\label{main2-cor.thm}
Let $k$ be a field, $G$ a smooth $k$-group scheme of finite type,
acting on a normal $k$-scheme $X$ of finite type.
Let $\varphi:X\rightarrow Y$ be a $G$-invariant morphism such that
$\O_Y\rightarrow (\varphi_*\O_X)^G$ is an isomorphism.
Then $Y$ is locally Krull.
If, moreover, $\Cl(X)$ is finitely generated, then $\Cl(G,X)$ and $\Cl(Y)$ are
also finitely generated.
\qed
\end{corollary}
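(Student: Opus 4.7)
The plan is to verify that every hypothesis of Theorem~\ref{main2.thm} is automatic for a normal $k$-scheme of finite type, and then simply invoke that theorem.

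First I would check that $X$ is quasi-compact and quasi-separated: any scheme of finite type over a field is Noetherian, hence quasi-compact, and any morphism of finite type is quasi-separated, so the structure morphism $X\to\Spec k$ being of finite type forces $X$ itself to be quasi-compact quasi-separated. Next I would confirm that $X$ is locally Krull: by definition $X$ admits an affine open cover by $\Spec A_i$ with each $A_i$ a finitely generated $k$-algebra which is normal; since $A_i$ is a Noetherian normal domain, it is a Krull domain (by the classical fact cited in the introduction), so $X$ is locally Krull in the sense of (\ref{lattice.par}).

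The final hypothesis to verify is the existence of a $k$-scheme $Z$ of finite type and a dominating $k$-morphism $Z\to X$. For this I would take $Z:=X$ itself with the identity morphism $\id:X\to X$, which is trivially dominating and which is of finite type over $k$ by assumption.

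With all hypotheses of Theorem~\ref{main2.thm} verified, the conclusions of that theorem apply verbatim: $Y$ is locally Krull, and whenever $\Cl(X)$ is finitely generated, so are $\Cl(G,X)$ and $\Cl(Y)$. There is no real obstacle here; the only content of the corollary is the observation that normal finite type $k$-schemes automatically satisfy the quasi-compact, quasi-separated, locally Krull, and domination conditions of the theorem.
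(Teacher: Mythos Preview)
Your proposal is correct and matches the paper's own (implicit) argument exactly: the sentence preceding the corollary states that a normal scheme of finite type over $k$ is quasi-compact, quasi-separated, locally Krull, and dominated by a scheme of finite type, so Theorem~\ref{main2.thm} applies directly. One tiny imprecision: ``any morphism of finite type is quasi-separated'' is not literally true in general, but since $X$ is Noetherian (as you already noted) it is automatically quasi-separated, so the conclusion stands.
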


\end{document}